\newtheorem{theorem}{Theorem}[section]
\newtheorem{proposition}[theorem]{Proposition}
\newtheorem{lemma}[theorem]{Lemma}
\newtheorem{corollary}[theorem]{Corollary}
\newtheorem{conjecture}[theorem]{Conjecture}
\newtheorem{heuristic}[theorem]{Heuristic}
\theoremstyle{definition}
\newtheorem{definition}[theorem]{Definition}
\theoremstyle{remark}
\newtheorem{remark}[theorem]{Remark}
\numberwithin{equation}{section}
\begin{document}

\title[Geometric modular Polynomials with Level Structures]{Geometric construction of modular polynomials with level structures}


\author[H. Onuki]{Hiroshi Onuki}
\address{The University of Tokyo, Japan}
\email{hiroshi-onuki@g.ecc.u-tokyo.ac.jp}

\author[Y. Uchida]{Yukihiro Uchida}
\address{Tokyo Metropolitan University, Japan}
\email{yuchida@tmu.ac.jp}

\author[R. Yoshizumi]{Ryo Yoshizumi}
\address{Kyushu University, Japan}
\email{yoshizumi.ryo.483@s.kyushu-u.ac.jp}

\subjclass[2010]{Primary 14H52; Secondary 11G20, 11Y16}

\keywords{Modular polynomial, Enhanced elliptic curve, Montgomery curve, Hessian curve}

\date{}

\dedicatory{}
\thanks{
    Authors are listed in alphabetical order;
    see \url{https://www.ams.org/profession/leaders/culture/JointResearchandItsPublicationfinal.pdf}.
    The authors would like to thank Yuya Murakami for
    helpful correspondence regarding an issue in the proof of \cite[Proposition 2.12]{Murakami2020},
    and for confirming that the argument can be corrected.
    The first author was supported by JST K Program Grant Number JPMJKP24U2, Japan.
    The second author was supported by JSPS KAKENHI Grant Number JP24K21512.
    The third author was supported by WISE program (MEXT) at Kyushu University.
}

\begin{abstract}
The classical modular polynomial for $j$-invariants
describes the relation between two elliptic curves connected by isogenies.
This polynomial has been applied to various algorithms in computational number theory,
such as point counting on elliptic curves.
In addition, computing the modular polynomial itself is also an important problem,
and various algorithms to compute it have been proposed.
On the other hand,
modular polynomials for other invariants of higher level structures
have also been studied.
For example, the modular polynomials for the Legendre $\lambda$-invariant
and the Weber functions are well-known.
In this paper, we give another approach to construct modular polynomials of higher level
purely algebraically.
In particular, we show the existence of modular polynomials
for invariants directly related to models of elliptic curves,
such as the coefficients of Montgomery and Hessian curves.
We also show that these modular polynomials have integer coefficients
and are symmetric and irreducible in certain cases,
and give an algorithm to compute them, which is based on the deformation method
by Kunzweiler and Robert.
\end{abstract}

\maketitle

\section{Introduction}
For a positive integer $m$,
the \emph{classical modular polynomial} $\Phi_m(X, Y) \in \Z[X, Y]$
is a polynomial that describes the relation between two $m$-isogenous $j$-invariants.
More precisely,
for $\tau$ in the upper half plane $\UH$ of the complex number field,
we denote by $E_\tau$ the elliptic curve over $\Comp$ defined by
$\Comp / (\Z + \Z\tau)$,
and by $j(\tau)$ the $j$-invariant of $E_\tau$.
Then, we have
\begin{equation*}
    \Phi_m(j(\tau), j(\tau')) = 0
\end{equation*}
if and only if there exists a cyclic isogeny of degree $m$
between the elliptic curves $E_\tau$ and $E_{\tau'}$.
The polynomial $\Phi_m(X, Y)$ is symmetric,
i.e., $\Phi_m(X, Y) = \Phi_m(Y, X)$,
and monic of degree $\psi(m)$ in both $X$ and $Y$,
where $\psi$ is the Dedekind function, i.e.,
\begin{equation*}
    \psi(m) = m \prod_{p \mid m} \left(1 + \frac{1}{p}\right)
\end{equation*}
with the product over all prime divisors $p$ of $m$.

\subsection{Algorithms for computing modular polynomials}
Computing the classical modular polynomials is a well-studied problem.
The algorithms to compute them include
a method using Fourier expansion \cite{Elk97}
and a method using isogenies between supersingular elliptic curves
with the Chinese Remainder Theorem (CRT)~\cite{CL05}.
Enge \cite{Eng09} provides an algorithm in $O(\l^3(\log\l)^{4+\epsilon})$ using interpolation and floating-point evaluation 
under some heuristics.
However, this algorithm needs an excessive amount of memory for large $\l$. 
Then, Br\"{o}ker, Lauter, and Sutherland~\cite{broker2012modular} provided
an algorithm in $O(\l^3\log^3\l\log\log \l)$ using
isogeny volcanoes with the CRT method under Generalized Riemann Hypothesis (GRH),
which requires a practical amount of memory.
Recently, Kunzweiler and Robert~\cite{deformation_Kunzweiler_2024} proposed
an algorithm in the same complexity without GRH.
Their algorithm also uses the CRT method,
and is based on the deformation theory and
a higher dimensional representation of isogenies between elliptic curves
based on Kani's work~\cite{Kan1997number},
developed by \cite{EC:CasDec23,EC:MMPPW23,EC:Robert23}.

\subsection{Modular polynomials of higher level}
Another characterization of the classical modular polynomial
is that $\Phi_m(X, j(\tau))$ is the minimal polynomial of $j(m\tau)$
over $\Q(j(\tau))$ for $\tau \in \UH$ such that $j(\tau)$ is transcendental over $\Q$.
Note that $E_{m\tau}$ is the codomain of the isogeny defined by
$z \mapsto mz$,
whose kernel is the cyclic subgroup of order $m$ generated by
$\frac{1}{m}$ in $E_\tau$.

This characterization allows us to define \emph{modular polynomials of higher level} as follows:
Let $f$ be a modular function for a congruence subgroup $\Gamma$ of level $N$ in $\SL$,
and let $\tau$ be an element in $\UH$ such that $f(\tau)$ is a transcendental number.
Let $m$ be a positive integer prime to $N$,
and $\Phi'(X)$ be the minimal polynomial of $f(m\tau)$ over $\Q(f(\tau))$.
Then, we define the \emph{modular polynomial of order $m$ for $f$}
to be the polynomial $\Phi_m^f(X, Y) \in \Q[X, Y]$ obtained by
replacing $f(\tau)$ by $Y$ in $\Phi'(X)$,
and denote it by $\Phi_m^f(X, Y)$.
For example,
it is known that there exist the modular polynomials
for the Weber function and the Legendre $\lambda$-invariant
(see \cite{Broker2011,broker2012modular} and \cite[Chap. 4.4]{PiAGM} for example).
As mentioned in \cite{broker2012modular},
if the Fourier expansion of $f$ has integer coefficients
then $\Phi_m^f(X, Y)$ has integer coefficients.
In addition,
the modular polynomial $\Phi_\ell^f(X, Y)$ for a prime $\ell \nmid N$
is symmetric if $f$ is invariant under the action of
certain matrices~\cite[Lemma 5.5]{Broker2011}.

\subsection{Geometric viewpoint of modular polynomials of higher level}
From geometric viewpoints,
the modular polynomial $\Phi_m^f(X, Y)$
describes the relation between isomorphism classes of enhanced elliptic curves
of level $\Gamma$ connected by isogenies of degree $m$
which preserve the level structure.
By observing the images of $\tau/N$ and $1/N$
under the isogeny $E_\tau \to E_{m\tau}$,
we can define \emph{isogenies between enhanced elliptic curves} as follows:
\begin{itemize}
    \item An \emph{enhanced elliptic curve of level $\Gamma(N)$}
        is a pair $(E, (P, Q))$ of an elliptic curve $E$ over $\Comp$
        and a basis $(P, Q)$ of the $N$-torsion subgroup $E[N]$ of $E$
        such that $e_N(P, Q) = e^{2\pi i / N}$,
        where $e_N$ is the Weil pairing on $E[N]$.
        An \emph{isogeny of degree $m$ between enhanced elliptic curves}
        $(E, (P, Q))$ and $(E', (P', Q'))$ of level $\Gamma(N)$
        is an isogeny $\varphi: E \to E'$ of degree $m$
        such that $\varphi(P) = [m]P'$ and $\varphi(Q) = Q'$.
    \item An \emph{enhanced elliptic curve of level $\Gamma_1(N)$}
        is a pair $(E, P)$ of an elliptic curve $E$ over $\Comp$
        and a point $P \in E[N]$ of order $N$.
        An \emph{isogeny of degree $m$ between enhanced elliptic curves}
        $(E, P)$ and $(E', P')$ of level $\Gamma_1(N)$
        is an isogeny $\varphi: E \to E'$ of degree $m$
        such that $\varphi(P) = P'$.
    \item An \emph{enhanced elliptic curve of level $\Gamma_0(N)$}
        is a pair $(E, C)$ of an elliptic curve $E$ over $\Comp$
        and a cyclic subgroup $C$ of $E[N]$ of order $N$.
        An \emph{isogeny of degree $m$ between enhanced elliptic curves}
        $(E, C)$ and $(E', C')$ of level $\Gamma_0(N)$
        is an isogeny $\varphi: E \to E'$ of degree $m$
        such that $\varphi(C) = C'$.
\end{itemize}
To simplify the notation,
we denote by $(E, \ast)$ an enhanced elliptic curve of level $\Gamma$,
where $\Gamma$ is one of $\Gamma(N)$, $\Gamma_1(N)$, and $\Gamma_0(N)$
if we do not need to specify the level structure.

The main target of this paper is \emph{(geometric) modular polynomials} of higher level,
which we define as follows:
\begin{definition}\label{def:modular_polynomial_higher_level}
    Let $\Gamma$ be $\Gamma(N)$, $\Gamma_1(N)$, or $\Gamma_0(N)$,
    let $f$ be an embedding from the set of isomorphism classes of
    enhanced elliptic curves of level $\Gamma$ to $\Comp$,
    and $m$ be a positive integer prime to $N$.
    A \emph{modular polynomial of order $m$ for $f$}
    is the polynomial $\Phi_m^f(X, Y) \in \Comp[X, Y]$
    such that
    \begin{itemize}
        \item $\Phi_m^f(f(E, \ast), f(E', \ast)) = 0$
            if and only if there exists a cyclic isogeny of degree $m$
            between the enhanced elliptic curves $(E, \ast)$ and $(E', \ast)$,
        \item $\Phi_m^f(X, Y)$ is monic of degree $\psi(m)$ in $Y$.
    \end{itemize}
\end{definition}
Note that a polynomial satisfying the first condition has degree at least $\psi(m)$ in $Y$,
since the number of cyclic subgroups of order $m$ in an elliptic curve is equal to $\psi(m)$.
Therefore, the above two conditions imply the uniqueness of modular polynomials.

This characterization of modular polynomials can be found in \cite{Murakami2020}.
That paper proves that 
if $f$ is a Hauptmodul of the canonical compactification $X_0(N)$ of $\Gamma_0(N)\backslash \UH$
such that the Fourier expansion of $f$ has integer coefficients
and its leading coefficient is 1,
then the modular polynomial $\Phi_m^f(X, Y)$ has integer coefficients,
is symmetric\footnote{
    The proof in \cite[Proposition 2.12]{Murakami2020} is incorrect as stated.
    However, the author has confirmed to us that it can be fixed.
}, and monic of degree $\psi(m)$ in both $X$ and $Y$
for a positive integer $m$ prime to $N$
(Theorem 2.9 and Proposition 2.12 in \cite{Murakami2020}).
We note that the proof in \cite{Murakami2020} can be easily extended to the case of $\Gamma_1(N)$.

\subsection{Contributions}
The Legendre $\lambda$-invariant has the elliptic curve model
\begin{equation}\label{eq:legendre_form}
    E_\lambda: Y^2Z = X(X - Z)(X - \lambda Z),
\end{equation}
and its basis $((0: 0: 1), (1: 0: 1))$ of the $2$-torsion subgroup.
Therefore, we can regard the modular polynomial for the Legendre $\lambda$-invariant
as the modular polynomial of higher level in Definition \ref{def:modular_polynomial_higher_level}.
On the other hand,
there are other forms of elliptic curves
which can be regarded as enhanced elliptic curves of higher level.
For example,
a Montgomery curve
is known to be an enhanced elliptic curve of level $\Gamma_0(4)$
(we will explain this in Section~\ref{subsec:montgomery}).
Therefore, it is natural to expect the existence of modular polynomials
for the coefficients of Montgomery curves.
However, to the best of our knowledge,
there is no literature proving the existence of modular polynomials
for such forms of elliptic curves.

One possible approach to prove the existence of modular polynomials
is to find an expression of the coefficients of Montgomery curves
as a modular function for $\Gamma_0(4)$,
and to show that its Fourier expansion has integer coefficients.

In this paper, we take another approach to construct modular polynomials of higher level.
In particular,
we show the existence of modular polynomials for a map $f$ in Definition \ref{def:modular_polynomial_higher_level}
when $f$ has scalar multiplication formulas, division polynomials, and isogeny formulas
satisfying certain conditions.
These conditions are designed to make $f(E', \ast)$ integral
over $\Z[f(E, \ast)]$, where $(E, \ast)$ and $(E', \ast)$ are enhanced elliptic curves
connected by a cyclic isogeny.
Under these conditions,
we prove that the modular polynomial $\Phi_m^f(X, Y)$
exists and has integer coefficients for any positive integer $m$ prime to $N$.
In addition,
we show that $\Phi_m^f(X, Y)$ is symmetric
if $m > 1$, and the level of $f$ is $\Gamma_0(N)$ or $m \equiv \pm 1 \pmod{N}$
when the level of $f$ is $\Gamma_1(N)$ or $\Gamma(N)$.
Examples of such maps $f$ include
the coefficients of Montgomery and Hessian curves.
We also give an algorithm to compute such modular polynomials,
which is based on the algorithm in \cite{deformation_Kunzweiler_2024}.

We do not claim that our examples are not modular functions.
Rather, we believe that they are modular functions.
Our contribution is to show another approach to construct modular polynomials,
which is purely algebraic and without using the theory of modular forms.
We believe that our approach could be useful to construct modular polynomials
for other forms of elliptic curves.

\section{Invariants and models for enhanced elliptic curves}\label{sec:invariants_and_models}
In this section, we introduce the notion of
an \emph{invariant of level $\Gamma$} and its \emph{good model} for a congruence subgroup $\Gamma$,
which are necessary to prove the existence of modular polynomials of higher level.
\begin{definition}
    Let $\Gamma$ be $\Gamma(N)$, $\Gamma_1(N)$, or $\Gamma_0(N)$,
    and $S$ be the set of isomorphism classes of
    enhanced elliptic curves of level $\Gamma$.
    An \emph{invariant of level $\Gamma$} is an injection
    \begin{equation*}
        \alpha : S \to \Comp
    \end{equation*}
    accompanied by a rational function $j_\alpha(X) \in \Q(X)$
    satisfying the following conditions:
    \begin{itemize}
        \item For a representative $(E, *)$ of an enhanced elliptic curve in $S$,
            the $j$-invariant of $E$ is given by $j(E) = j_\alpha(\alpha(E, *))$.
        \item Let $J_1(X)/J_0(X)$ be the reduced form of $j_\alpha(X)$
            with $J_0(X), J_1(X) \in \Z[X]$.
            Then, the following conditions hold:
            \begin{itemize}
                \item $\deg J_1(X) > \deg J_0(X)$,
                \item $J_0(z) = 0$ for all $z \in \Comp \setminus \image \alpha$,
                \item $J_0(X)$ is primitive,
                \item the leading coefficient of $J_1(X)$ is not divisible by any prime not dividing $N$.
            \end{itemize}
    \end{itemize}
\end{definition}

\begin{remark}\label{rem:degree_J1}
    Although we do not restrict the degree of $J_1(X)$,
    it should be equal to the index $[\SL : \Gamma]$
    since the number of isomorphism classes of enhanced elliptic curves of level $\Gamma$
    with a fixed $j$-invariant is equal to $[\SL : \Gamma]$.
\end{remark}

Obviously, the $j$-invariant itself is an invariant of level $\SL$.
For another example,
it is known that the Legendre $\lambda$-invariant
is an invariant of level $\Gamma(2)$ (see \cite[Chapter 18, \S 6]{lang1987elliptic}).
Since the $j$-invariant of the Legendre form \eqref{eq:legendre_form} is given by
\begin{equation*}
    j(\lambda) = \frac{2^8(\lambda^2 - \lambda + 1)^3}{\lambda^2 (\lambda - 1)^2}, 
\end{equation*}
the polynomials $J_1(X) = 2^8 (X^2 - X + 1)^3$ and $J_0(X) = X^2 (X - 1)^2$
satisfy the above conditions for $N = 2$.
In addition,
as we will see in Section~\ref{sec:examples},
the coefficients of Montgomery and Hessian curves are invariants of level $\Gamma_0(4)$ and $\Gamma(3)$, respectively.

In addition to the above conditions,
we require a complete set of representatives of enhanced elliptic curves
with coordinates having formulas for scalar multiplications, division polynomials, and isogenies.
We call such a set a \emph{good model} of the invariant.
Intuitively,
for a certain form of elliptic curves,
an invariant is the coefficient of an elliptic curve in that form,
and a model of the invariant is a pair of the form of elliptic curves itself
and the $x$-coordinate of the form.
More precisely, we define a \emph{model of an invariant of level $\Gamma$} as follows:
\begin{definition}
    Let $\Gamma$ be $\Gamma(N)$, $\Gamma_1(N)$, or $\Gamma_0(N)$,
    and $\alpha$ be an invariant of level $\Gamma$.
    A \emph{model} of $\alpha$ is
    a complete set of representatives
    $\{(E_z^\alpha, \ast) \mid z \in \image \alpha\}$
    of enhanced elliptic curves of level $\Gamma$
    accompanied by
    a set of functions
    $\{\coord{z}{\alpha} : E_z^\alpha(\Comp) \to \Comp \cup \{\infty\} \mid z \in \image\alpha\}$
    satisfying the following conditions:
    \begin{enumerate}
        \item For any $z \in \image \alpha$,
            the function $\coord{z}{\alpha}$ is surjective. \label{item:model_surjective}
        \item For any $z \in \image \alpha$, we have
            $(\coord{z}{\alpha})^{-1}(\infty) \subset E_z^\alpha[N]$. \label{item:model_infinity}
        \item For any $R_1, R_2 \in E_z^\alpha(\Comp)$,
            we have $\coord{z}{\alpha}(R_1) = \coord{z}{\alpha}(R_2)$
            if and only if $R_1 = R_2 + R_\infty$ or $R_1 = -R_2 + R_\infty$
            for some $R_\infty \in (\coord{z}{\alpha})^{-1}(\infty)$. \label{item:model_equivalence}
    \end{enumerate}

    We say that a model $\{((E_z^\alpha, \ast), \coord{z}{\alpha})\}$
    is a \emph{good model} if
    there exist sets of rational functions and polynomials,
    which we call the \emph{scalar multiplication formulas},
    \emph{division polynomials}, and \emph{isogeny formulas},
    defined as follows:
    \begin{itemize}
        \item (\textbf{Scalar multiplication formula})
            For an integer $m$,
            a \emph{scalar multiplication formula by $m$}
            is a rational function $M_m(X; Y) \in \Z(X; Y)$
            satisfying
            \begin{equation*}
                \coord{z}{\alpha}([m]R) = M_m(z; \coord{z}{\alpha}(R))
            \end{equation*}
            for any $z \in \image \alpha$ and $R \in E_z^\alpha(\Comp)$.
        \item (\textbf{Division polynomials})
            For a positive integer $m$,
            a \emph{division polynomial of order $m$}
            is a polynomial
            $\Psi_m(X; Y) \in \Z[X; Y]$ satisfying the following conditions:
            \begin{itemize}
                \item The leading coefficient of $\Psi_m(X; Y)$ as a polynomial in $Y$ is an integer not divisible by any prime not dividing $m$.
                \item For any $z \in \image \alpha$ and $R \in E_z^\alpha(\Comp) \setminus (\coord{z}{\alpha})^{-1}(\infty)$,
                    \begin{equation*}
                        \Psi_m(z; \coord{z}{\alpha}(R)) = 0
                    \end{equation*}
                    if and only if $\coord{z}{\alpha}([m]R) = \coord{z}{\alpha}(0_{E_z^\alpha})$,
                    where $0_{E_z^\alpha}$ is the identity of $E_z^\alpha$.
            \end{itemize}
        \item (\textbf{Isogeny formula})
            For a positive integer $m$ prime to $N$,
            an \emph{isogeny formula of degree $m$}
            is a polynomial
            $V_m(X; Y_1, \dots, Y_{m-1}) \in \Z\left[\frac{1}{m}\right][X; Y_1, \dots, Y_{m-1}]$
            satisfying the following condition:
            For any $z, z' \in \image \alpha$ such that
            there exists
            a cyclic isogeny $\varphi: E_z^\alpha \to E_{z'}^\alpha$ of degree $m$,
            it holds that
            \begin{equation*}
                z' = V_m\left(z; \coord{z}{\alpha}(R), \coord{z}{\alpha}([2]R), \dots, \coord{z}{\alpha}([(m-1)]R)\right)
            \end{equation*}
            for any generator $R \in E_z^\alpha(\Comp)$ of $\ker \varphi$.
    \end{itemize}
\end{definition}

From the properties of a good model,
the function $\coord{z}{\alpha}$ induces a surjection from $E_z^\alpha[m]\setminus\{0_{E_z^\alpha}\}$ to
the set $\{\tau \in \Comp \mid \Psi_m(z; \tau) = 0 \text{ and } \tau \neq \coord{z}{\alpha}(0_{E_z^\alpha})\}$
for any $z \in \image \alpha$ and positive integer $m$ prime to $N$.
In addition, two points $R_1, R_2 \in E_z^\alpha[m]$ have the same image under $\coord{z}{\alpha}$
if and only if $R_1 = \pm R_2$.

It is obvious that the $j$-invariant has a model; for example,
we can take
\begin{equation*}
    E_j^\mathrm{jinv} : Y^2Z = X^3 - \frac{27j}{j - 1728} XZ^2 - \frac{54j}{j - 1728} Z^3
\end{equation*}
with the $x$-coordinate as a model of the $j$-invariant
except for $j = 0, 1728$.
However, the division polynomials and isogeny formulas
for this model are not expressed in polynomials with coefficients in $\Z[j]$.
Therefore, the above model is not a good model.
On the other hand,
the Montgomery form and Hessian form
give good models of the invariants defined by their coefficients
(see Section~\ref{sec:examples}).

We also define a property of invariants
necessary to prove the irreducibility of modular polynomials
and to construct an algorithm to compute them.
\begin{definition}\label{def:computable}
    Let $\Gamma$ be $\Gamma(N)$, $\Gamma_1(N)$, or $\Gamma_0(N)$,
    and $\alpha$ be an invariant of level $\Gamma$.
    We say that $\alpha$ is \emph{computable} if,
    for any enhanced elliptic curve $(E^\mathrm{jinv}_j, S)$ of level $\Gamma$
    with $j$-invariant $j \in \Comp\setminus\{0, 1728\}$,
    the invariant $\alpha(E^\mathrm{jinv}_j, S)$ is represented by
    a rational function in $j$
    and the coordinates of the points in $S$,
    and the rational function does not depend on the choice of $j$ and $S$.
\end{definition}
Since our definition of invariants is motivated by examples such as
the coefficient of a Montgomery curve or a Hessian curve,
\Cref{def:computable} is a natural condition.
Indeed, these examples of invariants are computable
(see \Cref{prop:montgomery_well_defined} and \Cref{lem:isom_E_to_Hess}).

\section{The existence and properties of modular polynomials}\label{sec:main_theorems}
In this section, we state and prove the main theorems of this paper.
First, we show the existence of modular polynomials
for an invariant with a good model.
Next, we show that such modular polynomials have integer coefficients,
and are symmetric and irreducible in certain cases.
Finally, we give some conjectures for further properties of modular polynomials.

Throughout the rest of this paper,
we let $\Gamma$ be $\Gamma(N)$, $\Gamma_1(N)$, or $\Gamma_0(N)$
for a positive integer $N$,
and $\alpha$ be an invariant of level $\Gamma$ with a good model
$\{((E_z^\alpha, *), \coord{z}{\alpha})\}$.
In addition,
we let $J_1(X)/J_0(X)$ be the reduced form of $j_\alpha(X)$,
and $M_m(X; Y)$, $\Psi_m(X; Y)$, and $V_m(X; Y_1, \dots, Y_{m-1})$
be the scalar multiplication formulas,
division polynomials, and isogeny formulas of the good model
$\{((E_z^\alpha, *), \coord{z}{\alpha})\}$.
We also let $\Phi_m$ be the classical modular polynomial.

\subsection{Main theorems}\label{subsec:main_theorem}
First, we show the existence of modular polynomials
for an invariant with a good model.
\begin{theorem}\label{thm:existence_of_modular_polynomial}
    For any positive integer $m$ prime to $N$,
    there exists the modular polynomial $\Phi_m^\alpha(X, Y) \in \Q[X, Y]$
    of order $m$ for $\alpha$.
\end{theorem}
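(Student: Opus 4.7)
The plan is to construct $\Phi_m^\alpha$ directly as a product over cyclic subgroups of order $m$ on $E_z^\alpha$. For each $z \in \image \alpha$ and each of the $\psi(m)$ cyclic subgroups $C$ of order $m$, pick any generator $R$ of $C$; the isogeny formula gives a value
\begin{equation*}
w_C := V_m\bigl(z; \coord{z}{\alpha}(R),\ \coord{z}{\alpha}([2]R),\ \ldots,\ \coord{z}{\alpha}([m-1]R)\bigr) = \tilde V_m\bigl(z; \coord{z}{\alpha}(R)\bigr),
\end{equation*}
where $\tilde V_m(z; y) := V_m(z; y, M_2(z; y), \ldots, M_{m-1}(z; y))$ uses the scalar-multiplication formulas. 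By the defining property of $V_m$ in a good model, $w_C$ is independent of the chosen generator $R$ and equals $\alpha(E_z^\alpha/C, \ast')$ for the induced level structure. Setting $F_z(Y) := \prod_C (Y - w_C) \in \Comp[Y]$ yields a monic polynomial of degree $\psi(m)$ in $Y$; the goal is then to exhibit a polynomial $\Phi_m^\alpha(X, Y) \in \Q[X, Y]$ satisfying $\Phi_m^\alpha(z, Y) = F_z(Y)$ for $z \in \image \alpha$.

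Treating $z$ as a transcendental variable, I first argue that $F_z(Y) \in \Q(z)[Y]$ by Galois theory. Over a finite Galois extension $L/\Q(z)$ containing the coordinates of $E_z^\alpha[m]$, the Galois group $G := \mathrm{Gal}(L/\Q(z))$ acts on $E_z^\alpha[m]$ through group automorphisms---since the addition law is $\Q(z)$-rational---and therefore permutes the $\psi(m)$ cyclic subgroups of order $m$. Because $\coord{z}{\alpha}$, the formulas $M_k$, and $V_m$ are all $\Q$-rational (part of the good-model data), $\sigma(w_C) = w_{\sigma(C)}$ for every $\sigma \in G$; the multiset $\{w_C\}_C$ is $G$-invariant and so $F_z(Y) \in \Q(z)[Y]$. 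To upgrade this to $\Q[z, Y]$, I would use an integrality argument: the roots of $\Psi_m(z; Y)$ are integral over $\Q[z][1/m]$ (since the leading coefficient in $Y$ of $\Psi_m$ is only divisible by primes dividing $m$), and combining this with the $\Z[1/m]$-coefficients of $V_m$ and the controlled rational dependence of the $M_k$, each $w_C$ is integral over $\Q[z][1/(mN)]$. Elementary symmetric polynomials preserve integrality; since $\Q[z]$ is integrally closed in $\Q(z)$, the coefficients of $F_z(Y)$ then lie in $\Q[z]$ (after checking that no primes dividing $mN$ introduce denominators). Setting $\Phi_m^\alpha(X, Y) := F_X(Y)$ gives the desired polynomial in $\Q[X, Y]$, monic of degree $\psi(m)$ in $Y$, and the root characterization of Definition~\ref{def:modular_polynomial_higher_level} is immediate from the construction.

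The main obstacle I anticipate is precisely this $\Q[z, Y]$-integrality step. The scalar-multiplication formulas $M_k$ are only rational in $Y$, so $\tilde V_m(z; \coord{z}{\alpha}(R))$ can in principle carry denominators depending on $z$, and the argument requires showing that these denominators cancel in the symmetric functions of $\{w_C\}_C$ once one aggregates over all generators of all cyclic subgroups of order $m$; here the $\Z[1/m]$-integrality of $V_m$, the leading-coefficient control on $\Psi_m$, and the primitivity of $J_0$ together with the integral closure of $\Q[z]$ in $\Q(z)$ should provide enough rigidity. By contrast, the Galois-invariance step and the final identification with $\Phi_m^\alpha$ are essentially formal once the polynomiality of the coefficients in $z$ has been established.
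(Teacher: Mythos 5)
Your high-level strategy matches the paper's: construct the polynomial generically at a transcendental value of $\alpha$, use Galois invariance to land in $\Q(X)[Y]$, and use integrality (via the leading-coefficient condition on $\Psi_m$ and the $\Z[1/m]$-coefficients of $V_m$) to land in $\Q[X][Y]$. But you have a genuine gap at the very end, and you have mislocated where the difficulty lies.

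The step you flag as the ``main obstacle'' --- controlling denominators coming from primes dividing $mN$ --- is actually a non-issue here. You are working over $\Q$, so $\Q[z][1/(mN)] = \Q[z]$; the leading coefficient of $\Psi_m$ in $Y$ is a nonzero integer, hence a unit in $\Q$, and $V_m$ has coefficients in $\Z[1/m] \subset \Q$. Integrality of each $w_C$ over $\Q[z]$ follows immediately, and elementary symmetric functions of integral elements that lie in $\Q(z)$ land in $\Q[z]$ because $\Q[z]$ is integrally closed. Nothing further to check. (Denominator control at finitely many primes becomes essential only in Theorem~\ref{thm:integral_modular_polynomial}, where one works over $\Z$.)

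The real gap is the specialization step, which you dismiss as ``immediate from the construction.'' You define $F_z(Y)$ individually for each $z \in \image\alpha$, prove $F_z(Y) \in \Q[z][Y]$ for a transcendental $z$, and then set $\Phi_m^\alpha(X,Y) := F_X(Y)$. This gives a single polynomial that agrees with $F_z$ at transcendental $z$, but you have not shown $\Phi_m^\alpha(z, Y) = F_z(Y)$ for algebraic $z \in \image\alpha$, which is what the root characterization in Definition~\ref{def:modular_polynomial_higher_level} demands for \emph{all} $z$. The roots $w_C$ are defined by choosing generators of cyclic subgroups, and there is no a priori continuity or algebraicity in $z$ that lets you conclude $F_z$ is the specialization of the generic $F$. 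The paper closes precisely this gap: it constructs a ring homomorphism $\rho_z : \Q[X,\tau] \to \Comp$ (where $\tau$ generates the splitting field of $\Psi_m(X;Y)$ over $\Q(X)$), and then proves that $\rho_z$ carries roots of $\Psi_m(X;Y)$ bijectively to roots of $\Psi_m(z;Y)$, that the order of the corresponding torsion point is preserved because $M_i(X;\mu) = \infty$ forces $M_i(z;\rho_z(\mu)) = \infty$, and hence that the images $\rho_z(C_i)$ are exactly the cyclic-subgroup sets for $E_z^\alpha$. That chain of verifications --- not the integrality bookkeeping --- is the content you are missing.

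A secondary, smaller point: your Galois argument acts on $E_z^\alpha[m]$ and assumes the curve and the function $\coord{z}{\alpha}$ are defined over $\Q(z)$, which holds for the examples (Montgomery, Hessian) but is not among the abstract good-model hypotheses, which only posit that $M_k$, $\Psi_m$, $V_m$ have coefficients in $\Z$ (or $\Z[1/m]$). The paper sidesteps this by letting $\mathrm{Gal}(K/\Q(X))$ act on the roots of $\Psi_m$ directly and transporting the group-theoretic structure via the formulas $M_k$, rather than via the curve.
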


\begin{proof}
    Consider the univariate rational function field $\Q(X)$ over $\Q$.
    Let $K$ be the minimal splitting field of the polynomial
    $\Psi_m(X; Y)$ over $\Q(X)$,
    and $\tau$ be a primitive element of $K/\Q(X)$.
    By multiplying a suitable rational function in $\Q(X)$ to $\tau$,
    we may assume that all roots of $\Psi_m(X; Y)$ are contained in $\Q[X, \tau]$.
    Let $\tilde{f}(Y) \in \Q(X)[Y]$ be the minimal polynomial of $\tau$ over $\Q(X)$,
    and $f(Y) \in \Q[X][Y]$ be a polynomial obtained by multiplying
    a suitable polynomial in $\Q[X]$ to $\tilde{f}(Y)$.
    
    For $z \in \image \alpha$,
    we denote by $f_z(Y)$ the polynomial in $\Q[z][Y]$
    obtained by replacing $X$ by $z$ in the coefficients of $f(Y)$.
    Let $\tau_z$ be a root of $f_z(Y)$ in $\Comp$.
    Then we have a ring homomorphism
    $\rho_z : \Q[X, \tau] \to \Comp$
    defined by $X$ mapping to $z$ and $\tau$ mapping to $\tau_z$.

    Let $t \in \image \alpha$ be transcendental over $\Q$.
    Then $\rho_t$ induces an isomorphism onto $\Q[t, \tau_t]$.
    Let $\setR$ be $\rho_t^{-1}(\{\coord{t}{\alpha}(R) \mid R \in E_t^\alpha[m]\setminus\{0_{E_t^\alpha}\}\})$.
    Then $\setR$ is contained in the set of roots of $\Psi_m(X; Y)$ in $K$.
    By the choice of $\tau$,
    we have $\setR \subset \Q[X, \tau]$.
    As we mentioned in Section~\ref{sec:invariants_and_models},
    we can identify the set $\setR$ with the quotient
    $(E_t^\alpha[m]\setminus\{0_{E_t^\alpha}\})/\{\pm 1\}$
    via the isomorphism $\rho_t$.
    Therefore, we can take the subsets of $\setR$
    corresponding to a cyclic subgroup of order $m$ in $E_t^\alpha$.
    Let $C_1, \dots, C_{\psi(m)}$ be all such subsets of $\setR$.
    Then we show below that
    $\rho_{z}(C_1), \dots, \rho_{z}(C_{\psi(m)})$
    are the sets corresponding to
    the cyclic subgroups of order $m$ in $E_{z}^\alpha$
    for any $z \in \image \alpha$.

    Since $\Psi_m(X; Y)$ splits into linear factors over $\Q[X, \tau]$,
    the image $\Psi_m(z; Y)$ splits into linear factors over $\Q[z, \tau_z]$.
    In addition,
    the leading coefficient of $\Psi_m(z; Y)$ in $Y$ is in $\Q$.
    This means that all roots of $\Psi_m(z; Y)$ are contained in $\Q[z, \tau_z]$.
    Therefore, $\rho_{z}$ induces a surjection from $\setR$
    to the set of $\coord{z}{\alpha}(R)$ for $R \in E_z^\alpha[m]\setminus\{0_{E_z^\alpha}\}$.
    Since the cardinalities of both sets are equal to each other,
    the map $\rho_{z}$ induces a bijection between them.
    For any integer $i$ and $\mu \in \setR$,
    it holds that $M_i(X; \mu) = \infty$ implies $M_i(z; \rho_{z}(\mu)) = \infty$.
    Thus, the order of the point corresponding to $\rho_t(\mu)$ in $E_{t}^\alpha$
    and that of $\rho_{z}(\mu)$ are equal.
    In addition,
    it holds that $M_i(X; \mu) = \mu'$ implies
    $M_i(z; \rho_{z}(\mu)) = \rho_{z}(\mu')$
    for any integer $i$ and $\mu, \mu' \in \setR$.
    Therefore, the image $\rho_{z}(C_i)$
    is the set corresponding to
    a cyclic subgroup of order $m$ in $E_{z}^\alpha$.
    It follows that $\rho_{z}(C_i) \neq \rho_{z}(C_j)$ for $i \neq j$
    since $\rho_z$ induces a surjection from $\setR$ to
    the set of $\coord{z}{\alpha}(R)$ for $R \in E_z^\alpha[m]\setminus\{0_{E_z^\alpha}\}$.

    Let $\mu_i$ be an element of $C_i$ corresponding to a generator of the cyclic subgroup.
    We define a polynomial in $\Q[X, \tau][Y]$ by
    \begin{equation}\label{eq:def_mod_poly}
        F(Y) = \prod_{i=1}^{\psi(m)} (Y - \sigma_i),
    \end{equation}
    where
    $\sigma_i = V_m\left(X; \mu_i, M_2(X; \mu_i), \dots, M_{m-1}(X; \mu_i)\right) \in \Q[X, \tau]$.
    Then, for any $z \in \image \alpha$, the image $\rho_z(F(Y)) \in \Q[z, \tau_z][Y]$ satisfies,
    for any $z' \in \image \alpha$,
    $\rho_z(F(z')) = 0$
    if and only if there exists a cyclic isogeny of degree $m$
    between the enhanced elliptic curves
    $(E_z^\alpha, \ast)$ and $(E_{z'}^\alpha, \ast)$.
    Since $M_2, \dots, M_{m-1}$ and $V_m$ are invariant under
    the action of the Galois group of $K/\Q(X)$,
    any element in the Galois group acts on the set $\{\sigma_i \mid 1 \leq i \leq \psi(m)\}$
    as a permutation.
    Therefore, the coefficients of $F(Y)$ are contained in $\Q(X)$.
    In addition,
    since $\mu_i, M_2(X; \mu_i), \dots, M_{m-1}(X; \mu_i)$
    are roots of $\Psi_m(X; Y)$ whose leading coefficient in $Y$ is in $\Z$,
    the coefficients of $F(Y)$ are contained in $\Q[X]$.
    We define $\Phi_m^{\alpha}(X, Y) \in \Q[X, Y]$
    as $F(Y)$ viewed as a polynomial in $\Q[X, Y]$.
    By the construction,
    $\Phi_m^{\alpha}(X, Y)$ satisfies the conditions for the modular polynomial of order $m$ for $\alpha$.
\end{proof}

Next, we show that the modular polynomial constructed in Theorem \ref{thm:existence_of_modular_polynomial}
has integer coefficients and is of degree $\psi(m)$ in $X$.

\begin{theorem}\label{thm:integral_modular_polynomial}
    The modular polynomial $\Phi_m^{\alpha}(X, Y)$ satisfies the following:
    \begin{enumerate}
        \item $\Phi_m^{\alpha}(X, Y) \in \Z[X, Y]$. \label{enum:integer_coeff}
        \item $\deg_X(\Phi_m^{\alpha}) = \psi(m)$. \label{enum:degree_X}
    \end{enumerate}
\end{theorem}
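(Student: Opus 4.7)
My plan attacks the two claims separately, with \ref{enum:integer_coeff} the more delicate, via a two-stage strategy using the classical modular polynomial as a pivot in the second stage.

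\textbf{Part 1 (integer coefficients).} The first step is to establish $\Phi_m^\alpha \in \Z[1/m][X,Y]$ by an integral-closure argument. Because the leading coefficient of $\Psi_m(X;Y)$ in $Y$ is a product of primes dividing $m$, it is a unit in $\Z[1/m]$, so the roots $\mu_i$ of $\Psi_m$ are integral over $\Z[1/m][X]$. For $1 \le k \le m-1$, each $M_k(X;\mu_i)$ is itself a root of $\Psi_m$: it is the coordinate of a nonzero $m$-torsion point, which cannot lie in $(\coord{z}{\alpha})^{-1}(\infty) \subset E_z^\alpha[N]$ since $\gcd(m,N)=1$. Thus $M_k(X;\mu_i)$ is integral over $\Z[1/m][X]$, and since $V_m$ has coefficients in $\Z[1/m]$, so is $\sigma_i$. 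Normality of $\Z[1/m][X]$ then places each elementary symmetric function $e_k(\sigma_1,\dots,\sigma_{\psi(m)})$ in $\Z[1/m][X]$, giving $\Phi_m^\alpha \in \Z[1/m][X,Y]$.

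The harder second step is to eliminate denominators at primes $p \mid m$. Introduce
\[
G(X,Y) := J_0(X)^{\psi(m)} J_0(Y)^{\psi(m)} \Phi_m\bigl(J_1(X)/J_0(X),\, J_1(Y)/J_0(Y)\bigr) \in \Z[X,Y],
\]
which is integral because $\Phi_m$ is monic of degree $\psi(m)$ in both variables. Every zero of $\Phi_m^\alpha$ is a zero of $G$ (the associated $j$-invariants are $m$-isogenous), so a standard specialization argument gives $\Phi_m^\alpha \mid G$ in $\Q(X)[Y]$. Writing $G = \Phi_m^\alpha \cdot Q$, polynomial division (valid since $\Phi_m^\alpha$ is monic in $Y$) forces $Q \in \Z[1/m][X,Y]$. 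Fix a prime $p \mid m$ and let $v_p$ denote the Gauss valuation on $\Q[X,Y]$, i.e.\ the minimum of $v_p$ over scalar coefficients. The leading coefficient of $G$ in $Y$ is $J_0(X)^{\psi(m)} c_J^{\psi(m)}$, where $c_J$ is the leading coefficient of $J_1$; since $J_0$ is primitive and $c_J$ is divisible only by primes of $N$ (not by $p$, because $\gcd(m,N)=1$), we get $v_p(G)=0$. Monicity of $\Phi_m^\alpha$ in $Y$ forces $v_p(\Phi_m^\alpha) \le 0$, and because $Q$ inherits the same leading coefficient in $Y$, also $v_p(Q) \le 0$. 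The Gauss identity $v_p(\Phi_m^\alpha) + v_p(Q) = v_p(G) = 0$ then forces $v_p(\Phi_m^\alpha)=0$, so $\Phi_m^\alpha \in \Z_{(p)}[X,Y]$. Intersecting over all primes combines with the first step to yield $\Phi_m^\alpha \in \Z[X,Y]$.

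\textbf{Part 2 (degree $\psi(m)$ in $X$).} For generic $y_0 \in \image\alpha$, the leading coefficient of $\Phi_m^\alpha$ in $X$ (a nonzero polynomial in $Y$) is nonvanishing, so $\deg_X \Phi_m^\alpha = \deg_X \Phi_m^\alpha(X,y_0)$. The defining property identifies the roots of $\Phi_m^\alpha(X,y_0)$ with the $\alpha$-values of sources of level-preserving cyclic $m$-isogenies into $(E_{y_0}^\alpha,*)$. A duality count across the three level types — using $\gcd(m,N)=1$ so that the level structure transports uniquely through the dual isogeny — produces exactly one such source, namely $E_{y_0}^\alpha/K$ equipped with a canonically determined level, for each cyclic $m$-subgroup $K$ of $E_{y_0}^\alpha$. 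Since there are $\psi(m)$ such subgroups and they yield pairwise non-isomorphic sources generically, $\Phi_m^\alpha(X,y_0)$ has exactly $\psi(m)$ distinct roots, and $\deg_X \Phi_m^\alpha = \psi(m)$ follows.

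\textbf{Main obstacle.} The crux is removing $p$-denominators for $p \mid m$ in Part 1: integrality over $\Z[1/m][X]$ follows for free from the axioms on $\Psi_m$ and $V_m$, but descending to $\Z[X,Y]$ genuinely requires an external monic witness (the classical modular polynomial) together with a careful valuation bookkeeping that depends essentially on the invariant-axiom constraints that $J_0$ is primitive and that the leading coefficient of $J_1$ has no prime factors outside $N$.
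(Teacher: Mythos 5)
Your Part~1 is correct and takes a genuinely different route from the paper. The paper argues that each $\sigma_i$ is integral over $\Z[\tfrac1m,X]$ (from the leading coefficient of $\Psi_m$ and the denominators of $V_m$) and separately integral over $\Z[\tfrac1c,J_1/J_0]$ (via the classical $\Phi_m$ and the equation $J_1(\sigma_i)-J_0(\sigma_i)j_i=0$), then intersects the two rings to land in $\Z[X]$. Your version replaces the second integrality step by the auxiliary polynomial
$G=J_0(X)^{\psi(m)}J_0(Y)^{\psi(m)}\Phi_m(J_1(X)/J_0(X),J_1(Y)/J_0(Y))\in\Z[X,Y]$
and a Gauss-valuation bookkeeping: you show $\Phi_m^\alpha\mid G$ in $\Q(X)[Y]$ (this does require noting that the $\sigma_i$ are pairwise distinct, which holds by construction, so $\Phi_m^\alpha$ is separable in $Y$), deduce $Q\in\Z[\tfrac1m][X,Y]$ from monic division, and then read off $v_p(\Phi_m^\alpha)=0$ for $p\mid m$ from $v_p(G)=0$ and the multiplicativity of the Gauss valuation. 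The hypotheses that $J_0$ is primitive and that the leading coefficient of $J_1$ has no prime factors outside $N$ enter at exactly the same point in both proofs. Your version is arguably more transparent: the paper's step ``integral over $A$ and over $B$, hence over $A\cap B$'' is not a general fact and needs more justification than the paper gives, while your Gauss-lemma argument is a self-contained standard tool.

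Part~2 as written has a genuine gap. You correctly reduce, for generic $y_0\in\image\alpha$, to counting the roots of the univariate polynomial $\Phi_m^\alpha(X,y_0)$, and a duality argument shows there are exactly $\psi(m)$ distinct roots in $\image\alpha$; a further (unstated but easily supplied) remark that, for all but finitely many $y_0$, none of the finitely many points of $\Comp\setminus\image\alpha$ can be roots, shows that these are all the roots. But ``exactly $\psi(m)$ distinct roots'' only bounds $\deg_X\Phi_m^\alpha$ from below; it does not rule out that $\Phi_m^\alpha(X,y_0)$ has some root of multiplicity $\ge 2$ for every generic $y_0$, i.e.\ that $\disc_X\Phi_m^\alpha$ vanishes identically as a polynomial in $Y$. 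The defining property of $\Phi_m^\alpha$ constrains only its zero set, not multiplicities, and the separability in $Y$ established earlier does not transfer automatically to separability in $X$. The paper sidesteps this entirely with a double-resultant computation: it shows $\res_Z(\res_W(F(W,X),\Phi_m^\alpha(W,Z)),F(Z,Y))=\gamma\,\Phi_m(X,Y)^{\deg J_1}$ by combining the irreducibility of $\Phi_m$ with a $Y$-degree count, and then reads off $\deg_X\Phi_m^\alpha=\psi(m)$ by comparing $X$-degrees on both sides. To fix your argument you would need to additionally prove that $\Phi_m^\alpha$ is squarefree in $X$ over $\Comp(Y)$ (or, equivalently, separable in $X$), which is exactly the content that the paper's resultant identity delivers for free.
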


\begin{proof}
    (\ref{enum:integer_coeff}):
    Recall the definition of $\Phi_m^{\alpha}(X, Y)$ in \eqref{eq:def_mod_poly}.
    Since the leading coefficient of $\Psi_m(X; Y)$ in $Y$ is an integer not divisible by any prime not dividing $m$,
    and $V_m$ has coefficients in $\Z\left[\frac{1}{m}\right]$,
    the roots $\sigma_1, \dots, \sigma_{\psi(m)}$ of $\Phi_m^{\alpha}(X, Y)$
    as a polynomial in $Y$
    are integral over $\Z\left[\frac{1}{m}, X\right]$.

    Let $j_1, \dots, j_{\psi(m)}$ be the roots of
    the classical modular polynomial $\Phi_m\left(\frac{J_1(X)}{J_0(X)}, Y\right)$
    in an algebraic closure of $\Q(X)$.
    Then, we have $j_i = J_1(\sigma_i)/J_0(\sigma_i)$ for all $1 \leq i \leq \psi(m)$
    by reindexing if necessary.
    Since $\Phi_m$ has integer coefficients and is monic in $Y$,
    the root $j_i$ is integral over $\Z\left[\frac{J_1(X)}{J_0(X)}\right]$ for all $1 \leq i \leq \psi(m)$.
    Let $c$ be the leading coefficient of $J_1(X)$.
    Then, $\sigma_i$ is integral over $\Z\left[\frac{1}{c}, j_i\right]$
    since we have $J_1(\sigma_i) - J_0(\sigma_i) j_i = 0$.
    Thus, 
    $\sigma_i$ is integral over $\Z\left[\frac{1}{c}, \frac{J_1(X)}{J_0(X)}\right]$.

    Combining the above results,
    we see that $\sigma_i$ is integral over $\Z\left[\frac{1}{m}, X\right] \cap \Z\left[\frac{1}{c}, \frac{J_1(X)}{J_0(X)}\right]$.
    Since $m$ is prime to $c$ and $J_0(X)$ is primitive,
    we have $\Z\left[\frac{1}{m}, X\right] \cap \Z\left[\frac{1}{c}, \frac{J_1(X)}{J_0(X)}\right] = \Z[X]$.
    This implies that the coefficients of $\Phi_m^{\alpha}(X, Y)$ are in $\Z$.

    (\ref{enum:degree_X}):
    Let $F(X, Y) = J_1(X) - J_0(X) Y$,
    and $d = \deg J_1$.
    Consider four variables $X, Y, Z, W$,
    and define a polynomial $G(X, Y) \in \Z[X, Y]$ by
    \begin{equation*}
        G(X, Y) = \res_Z(\res_W(F(W, X), \Phi_m^\alpha(W, Z)), F(Z, Y)) \in \Z[X, Y].
    \end{equation*}
    Here, $\res_Z$ and $\res_W$ denote the resultant with respect to
    variables $Z$ and $W$, respectively.
    By the property of resultant,
    for any $j, j' \in \Comp$,
    it holds that $G(j, j') = 0$
    if and only if there exist $z, z' \in \Comp$
    such that $F(z, j) = 0$, $F(z', j') = 0$,
    and $\Phi_m^\alpha(z, z') = 0$.
    If $z \not\in \image \alpha$ then $J_0(z) = 0$,
    and thus $F(z, j) \neq 0$ for any $j \in \Comp$.
    This means that $G(j, j') = 0$ if and only if
    there exist $z, z' \in \image \alpha$
    such that $E_z^\alpha$ and $E_{z'}^\alpha$ are connected by
    a cyclic isogeny of degree $m$,
    and $j = j(E_z^\alpha)$, $j' = j(E_{z'}^\alpha)$.
    Therefore, $G(j, j') = 0$ if and only if $\Phi_m(j, j') = 0$.
    From the irreducibility of $\Phi_m(X, Y)$,
    it follows that $G(X, Y) = \gamma \cdot \Phi_m(X, Y)^n$ for some $\gamma \in \Comp^\times$ and $n \in \Z_{\geq 1}$.

    For $j \in \Comp$, we let $\omega_1, \dots, \omega_d$
    be the roots of $F(W, j)$ in $\Comp$.
    For $i = 1, \dots, d$,
    let $z_{i, 1}, \dots, z_{i, \psi(m)}$ be the roots of $\Phi_m^\alpha(\omega_i, Z)$ in $\Comp$.
    Then, we have
    \begin{align*}
        G(j, Y) &= \res_Z\left(c^{\deg_X(\Phi_m^\alpha)}\prod_{i=1}^{d} \Phi_m^\alpha(\omega_i, Z), F(Z, Y)\right)\\
            &= c^{d\cdot\deg_X(\Phi_m^\alpha)} \prod_{i=1}^{d} \prod_{j=1}^{\psi(m)} F(z_{i, j}, Y).
    \end{align*}
    Therefore, we have $\deg_Y(G(j, Y)) = d \cdot \psi(m)$.
    Since this holds for any $j \in \Comp$,
    we have $\deg_Y(G(X, Y)) = d \cdot \psi(m)$.
    Thus, we have $G(X, Y) = \gamma \cdot \Phi_m(X, Y)^d$.

    Similar discussion considering $G(X, j)$ for $j \in \Comp$
    shows that $\deg_X(G(X, Y)) = d\cdot\deg_X(\Phi_m^\alpha)$.
    By comparing the degrees of $G(X, Y)$ and $\Phi_m(X, Y)^d$ in $X$,
    we obtain $\deg_X(\Phi_m^\alpha) = \psi(m)$.
\end{proof}

If $m > 1$, then the classical modular polynomial $\Phi_m(X, Y)$ is symmetric.
This property reflects the fact that the dual isogeny of a cyclic isogeny of degree $m$ is also
a cyclic isogeny of degree $m$.
However, this property does not always hold
for modular polynomials of $\Gamma(N)$ for $N > 4$.
Let $\varphi : (E, (P, Q)) \to (E', (P', Q'))$ be a cyclic isogeny of degree $m$
between enhanced elliptic curves of level $\Gamma(N)$.
Then, we have $\varphi(P) = [m]P'$ and $\varphi(Q) = Q'$
by the definition of isogenies between
enhanced elliptic curves of level $\Gamma(N)$.
On the other hand, we have $\hat{\varphi}(P') = P$ and $\hat{\varphi}(Q') = [m]Q$
since $\hat{\varphi}\circ \varphi = [m]$.
Therefore, the dual isogeny $\hat{\varphi}$
is an isogeny between enhanced elliptic curves of level $\Gamma(N)$
if and only if $m \equiv 1 \pmod{N}$.
If $m \equiv -1 \pmod{N}$,
then $-\hat{\varphi}$ is an isogeny between enhanced elliptic curves of level $\Gamma(N)$.
However, in the other cases, there is no guarantee that
there exists an isogeny from $(E', (P', Q'))$ to $(E, (P, Q))$ of degree $m$.
Therefore, the modular polynomial for $\Gamma(N)$
is not necessarily symmetric in general.
A similar argument holds for $\Gamma_1(N)$.
Conversely, if $m \equiv \pm 1 \pmod{N}$ or $\Gamma = \Gamma_0(N)$,
then we can show that the modular polynomial is symmetric.
\begin{theorem}\label{thm:symmetry_of_modular_polynomial}
    Let $m > 1$ be an integer prime to $N$.
    Suppose that
    $m \equiv \pm 1 \pmod{N}$ or $\Gamma = \Gamma_0(N)$.
    Then, the modular polynomial $\Phi_m^{\alpha}(X, Y)$ is symmetric,
    i.e., $\Phi_m^{\alpha}(X, Y) = \Phi_m^{\alpha}(Y, X)$.
\end{theorem}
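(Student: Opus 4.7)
The plan is to first prove that the $m$-isogeny relation on isomorphism classes of enhanced elliptic curves is symmetric under the stated hypotheses, and then to transfer this set-theoretic symmetry to a polynomial identity by specializing at a transcendental value and using the degree estimate of Theorem~\ref{thm:integral_modular_polynomial}.

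For the symmetry of the relation, let $\varphi\colon (E, *) \to (E', *)$ be a cyclic $m$-isogeny of enhanced elliptic curves. When $\Gamma = \Gamma_0(N)$, the dual $\hat\varphi$ sends the cyclic subgroup $C' = \varphi(C)$ to $\hat\varphi(C') = [m]C = C$, using $\gcd(m, N) = 1$, so $\hat\varphi$ preserves the $\Gamma_0(N)$-structure. When $m \equiv 1 \pmod N$, the relations $\hat\varphi(P') = P = [m]P$ and $\hat\varphi(Q') = [m]Q = Q$ show that $\hat\varphi$ preserves the $\Gamma_1(N)$- or $\Gamma(N)$-structure; when $m \equiv -1 \pmod N$, the isogeny $-\hat\varphi$ does instead. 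In all cases the binary relation ``there exists a cyclic $m$-isogeny $(E_z^\alpha, *) \to (E_{z'}^\alpha, *)$'' on $\image\alpha$ is symmetric.

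Next, fix $z_0 \in \image\alpha$ transcendental over $\Q$, so that $j_\alpha(z_0)$ is transcendental and $\End(E_{z_0}^\alpha) = \Z$. By the construction of Theorem~\ref{thm:existence_of_modular_polynomial} we may write
\begin{equation*}
    \Phi_m^\alpha(z_0, Y) = \prod_{i=1}^{\psi(m)}(Y - \sigma_i),
\end{equation*}
where each $\sigma_i = \alpha(E_i, *)$ is the invariant of the codomain of a cyclic $m$-isogeny from $(E_{z_0}^\alpha, *)$. By the symmetry of the isogeny relation, each $\sigma_i$ is also a root of $\Phi_m^\alpha(Y, z_0) \in \Comp[Y]$. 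Since the $\psi(m)$ enhanced codomains are pairwise non-isomorphic for generic $z_0$ (a standard fact following from $\End(E_{z_0}^\alpha) = \Z$), the $\sigma_i$ are distinct, and $\deg_Y \Phi_m^\alpha(Y, z_0) \leq \deg_X \Phi_m^\alpha = \psi(m)$ by Theorem~\ref{thm:integral_modular_polynomial}. Comparing roots and leading coefficients in $Y$ therefore forces
\begin{equation*}
    \Phi_m^\alpha(Y, z_0) = B(z_0)\,\Phi_m^\alpha(z_0, Y),
\end{equation*}
where $B(Y) \in \Z[Y]$ is the coefficient of $X^{\psi(m)}$ in $\Phi_m^\alpha(X, Y)$, so that $B(z_0)$ is the leading coefficient of $\Phi_m^\alpha(Y, z_0)$ in $Y$.

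Since the identity holds for infinitely many $z_0$, it extends to the polynomial identity $\Phi_m^\alpha(Y, X) = B(X)\,\Phi_m^\alpha(X, Y)$ in $\Z[X, Y]$. Setting $X = Y$ gives $\Phi_m^\alpha(Y, Y) = B(Y)\,\Phi_m^\alpha(Y, Y)$, so $B(Y) = 1$ as soon as $\Phi_m^\alpha(Y, Y) \neq 0$ in $\Z[Y]$. But $\Phi_m^\alpha(z_0, z_0) = 0$ would produce a cyclic $m$-isogeny $(E_{z_0}^\alpha, *) \to (E_{z_0}^\alpha, *)$, impossible for $m > 1$ since $\End(E_{z_0}^\alpha) = \Z$ (every endomorphism is $[n]$, whose kernel $E_{z_0}^\alpha[n]$ is not cyclic for $n > 1$). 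Hence $B = 1$ and $\Phi_m^\alpha(X, Y) = \Phi_m^\alpha(Y, X)$. The step I expect to require the most care is justifying the generic distinctness of the $\sigma_i$; an alternative is to verify that the discriminant of $\Phi_m^\alpha(X, Y)$ with respect to $Y$ is a nonzero element of $\Z[X]$, which can be inherited from the classical modular polynomial via the resultant identity $G(X, Y) = \gamma\,\Phi_m(X, Y)^d$ established in the proof of Theorem~\ref{thm:integral_modular_polynomial}.
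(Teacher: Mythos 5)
Your proof is correct and follows essentially the same strategy as the paper's: first show the cyclic $m$-isogeny relation on enhanced elliptic curves is symmetric under the stated hypotheses (via the dual isogeny or its negation), then transfer this to a polynomial identity $\Phi_m^\alpha(Y,X) = c(X)\Phi_m^\alpha(X,Y)$ using the degree bound $\deg_X \Phi_m^\alpha = \psi(m)$ from Theorem~\ref{thm:integral_modular_polynomial}, and finally force the multiplier to be $1$ by specializing at $X=Y$ and using that no curve with generic $j$-invariant admits a cyclic $m$-isogeny to itself for $m>1$. The one place where you are more explicit than the paper is in justifying that $\Phi_m^\alpha(z_0,Y)$ has $\psi(m)$ \emph{distinct} roots for transcendental $z_0$ (via $\End(E_{z_0}^\alpha)=\Z$); the paper's step concluding $\Upsilon(X,Y)=0$ from a degree count implicitly relies on this same squarefreeness but leaves it unremarked, so your version is a touch more complete on that point.
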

\begin{proof}
    Let $z, z' \in \image \alpha$ such that
    there exists a cyclic isogeny $\varphi$ of degree $m$
    between the enhanced elliptic curves
    $(E_z^\alpha, \ast)$ and $(E_{z'}^\alpha, \ast)$.
    Then, as discussed above,
    the dual isogeny $\hat{\varphi}$ or its negation
    is a cyclic isogeny of degree $m$
    between $(E_{z'}^\alpha, \ast)$ and $(E_z^\alpha, \ast)$.
    Therefore, we have $\Phi_m^{\alpha}(z, z') = 0$
    if and only if $\Phi_m^{\alpha}(z', z) = 0$
    for any $z, z' \in \image \alpha$.

    Let $c(Y) \in \Z[Y]$ be the leading coefficient of $\Phi_m^{\alpha}(X, Y)$ in $X$.
    We define a polynomial $\Upsilon(X, Y)$ as $\Phi_m^\alpha(Y, X) - c(X)\Phi_m^{\alpha}(X, Y)$.
    Then, for any $z, z' \in \image \alpha$ such that $\Phi_m^{\alpha}(z, z') = 0$,
    we have $\Upsilon(z, z') = 0$ by the above argument.
    In addition, we have $\deg_Y(\Upsilon) < \psi(m)$.
    Therefore, we have $\Upsilon(X, Y) = 0$,
    thus $\Phi_m^{\alpha}(Y, X) = c(X) \Phi_m^{\alpha}(X, Y)$.
    By exchanging the roles of $X$ and $Y$,
    we have $\Phi_m^{\alpha}(X, Y) = c(Y) \Phi_m^{\alpha}(Y, X)$.
    This implies $\Phi_m^{\alpha}(X, Y) = c(X) c(Y) \Phi_m^{\alpha}(X, Y)$.
    Therefore, we have $c(X) = c(Y) = \pm 1$.
    
    Suppose that $c(Y) = -1$.
    Then, we have $\Phi_m^{\alpha}(X, X) = -\Phi_m^{\alpha}(X, X)$,
    thus $\Phi_m^{\alpha}(X, X) = 0$.
    This means that,
    for any $z \in \image \alpha$,
    the elliptic curve $E_z^\alpha$ is $m$-isogenous to itself.
    This is a contradiction since $m > 1$.
    Therefore, we obtain $c(Y) = 1$,
    thus $\Phi_m^{\alpha}(X, Y) = \Phi_m^{\alpha}(Y, X)$.
\end{proof}

If $\alpha$ is computable in the sense of Definition \ref{def:computable},
then we can prove the irreducibility of the modular polynomial for $\alpha$.
\begin{theorem}\label{thm:irreducibility_modular_polynomial}
    Assume that the invariant $\alpha$ is computable.
    Let $m$ be a positive integer prime to $N$.
    Then, the modular polynomial $\Phi_m^\alpha(X, Y)$
    is irreducible as a polynomial in $Y$ over $\Comp(X)$.
\end{theorem}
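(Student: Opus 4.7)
The plan is to prove irreducibility of $\Phi_m^\alpha(X, Y)$ as a polynomial in $Y$ over $\Comp(X)$ by showing that the absolute Galois group of $\Comp(X)$ acts transitively on the $\psi(m)$ roots. I will rely on the explicit construction from the proof of Theorem~\ref{thm:existence_of_modular_polynomial}: the roots $\sigma_1, \dots, \sigma_{\psi(m)}$ in an algebraic closure of $\Comp(X)$ are naturally indexed by the cyclic subgroups $C_1, \dots, C_{\psi(m)}$ of order $m$ of the universal enhanced elliptic curve $(E_X^\alpha, *)$, through the isogeny formula $\sigma_i = V_m(X; \mu_i, M_2(X; \mu_i), \dots, M_{m-1}(X; \mu_i))$, where $\mu_i$ is a root of $\Psi_m(X; Y)$ coming from a generator of $C_i$.

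The computability hypothesis is the crucial bridge. By definition, $\sigma_i$ equals $R(j(E_X^\alpha/C_i),\ \text{coordinates of } \varphi_i(*))$ for a single rational function $R$ independent of $i$, where $\varphi_i$ denotes the quotient isogeny with kernel $C_i$. Hence any Galois automorphism that permutes the $C_i$ permutes the $\sigma_i$ compatibly, and the theorem reduces to showing that the Galois action on $\{C_1, \dots, C_{\psi(m)}\}$ is transitive.

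For that, I will study the Galois group $G$ of the extension of $\Comp(X)$ obtained by adjoining the coordinates of all points of $E_X^\alpha[m]$. Since $\Comp$ contains the $m$-th roots of unity and the Galois action preserves the Weil pairing, $G$ embeds into $\mathrm{SL}_2(\Z/m\Z)$. Under the identification of $\Comp(X) = \Comp(\alpha)$ with the function field of the modular curve $Y(\Gamma)$, which is made legitimate precisely by the computability of $\alpha$, the group $G$ equals the image of $\Gamma$ under reduction mod $m$. Since $\Gamma \supseteq \Gamma(N)$ and $\gcd(m, N) = 1$, the Chinese Remainder Theorem decomposition $\mathrm{SL}_2(\Z/mN\Z) \cong \mathrm{SL}_2(\Z/m\Z) \times \mathrm{SL}_2(\Z/N\Z)$, together with the surjectivity of $\mathrm{SL}_2(\Z) \to \mathrm{SL}_2(\Z/mN\Z)$, forces $\Gamma(N)$ (hence $\Gamma$) to surject onto $\mathrm{SL}_2(\Z/m\Z)$. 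As $\mathrm{SL}_2(\Z/m\Z)$ acts transitively on $\mathbb{P}^1(\Z/m\Z)$, i.e., on the set of cyclic order-$m$ subgroups of $(\Z/m\Z)^2$, transitivity of the Galois action on $\{C_i\}$ follows and we are done.

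The hard part will be making the monodromy identification of $G$ with the image of $\Gamma$ genuinely rigorous from an algebraic standpoint. The underlying fact is the Galois theory of the modular cover $Y(\Gamma \cap \Gamma(m)) \to Y(\Gamma)$, and invoking it requires verifying that $\alpha$ indeed serves as a function field generator for $Y(\Gamma)$ compatible with the universal family. Computability is exactly the hypothesis that makes this work: $\alpha$ is forced to depend only on the universal data $(j, \text{level structure})$ through a fixed rational expression, so it descends from the universal family and the standard Galois theory of modular covers applies as expected.
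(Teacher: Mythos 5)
Your outline is in the right spirit and shares the paper's two essential Galois-theoretic inputs: the isomorphism $\mathrm{Gal}(\Comp(j, E_j^{\mathrm{jinv}}[n])/\Comp(j)) \cong \mathrm{SL}_2(\Z/n\Z)$, and the CRT decomposition $\mathrm{SL}_2(\Z/mN\Z) \cong \mathrm{SL}_2(\Z/m\Z) \times \mathrm{SL}_2(\Z/N\Z)$ that decouples $m$-torsion from the $N$-level data, with computability used to place $\alpha$ inside $\Comp(j, E_j^{\mathrm{jinv}}[N])$. You package the conclusion as transitivity of the Galois action on cyclic order-$m$ subgroups, which is conceptually more direct than the paper's route: the paper never asserts that $\Comp(\alpha)$ is the function field of $Y(\Gamma)$, only that $\tilde{\alpha}$ lies in $\Comp(j, E_j^{\mathrm{jinv}}[N])$, from which it deduces that the splitting field $K$ of $J_1(X)-J_0(X)j$ is linearly disjoint from $\Comp(j, \tilde{\jmath})$, and then closes with a resultant degree computation comparing a monic factor of $\Phi_m^\alpha$ against a power of $\Phi_m$.

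However, the step you yourself flag as ``the hard part''---that the Galois group $G$ of the $m$-torsion extension of $\Comp(\alpha)$ is all of $\mathrm{SL}_2(\Z/m\Z)$---is left unproved, and saying ``computability is exactly the hypothesis that makes this work'' is not an argument. This is the load-bearing step, and you should not lean on the analytic description of the cover $Y(\Gamma\cap\Gamma(m))\to Y(\Gamma)$ as a black box, since the whole point of the paper is to avoid the transcendental theory. The purely algebraic way to close it uses exactly the paper's linear disjointness: computability gives $\Comp(\alpha)\subseteq\Comp(j, E_j^{\mathrm{jinv}}[N])$; since $\Comp(j, E_j^{\mathrm{jinv}}[N])$ and $\Comp(j, E_j^{\mathrm{jinv}}[m])$ meet in $\Comp(j)$, the subgroup $\mathrm{Gal}(\Comp(j, E_j^{\mathrm{jinv}}[mN])/\Comp(\alpha))$ of $\mathrm{SL}_2(\Z/m\Z)\times\mathrm{SL}_2(\Z/N\Z)$ contains the full factor $\mathrm{SL}_2(\Z/m\Z)\times\{1\}$, and restricting to the subextension generated by the $m$-torsion yields $G=\mathrm{SL}_2(\Z/m\Z)$, which does act transitively on cyclic order-$m$ subgroups. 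With that supplied (and the routine observation that the $\sigma_i$ are pairwise distinct, so that transitivity on roots really implies irreducibility of the degree-$\psi(m)$ polynomial), your argument would be complete and would constitute a somewhat more conceptual alternative to the resultant computation in the paper.
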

\begin{proof}
    Consider the univariate rational function field $\Comp(j)$ over $\Comp$
    and its algebraic closure $\overline{\Comp(j)}$.
    Let $\tilde{\jmath}$ be a root of $\Phi_m(j, X)$ in $\overline{\Comp(j)}$,
    and $K$ be the minimal splitting field of $J_1(X) - J_0(X)j$ in $\overline{\Comp(j)}$.

    Let $E_j^\mathrm{jinv}$ be the elliptic curve over $\Comp(j)$
    defined in Section \ref{sec:invariants_and_models}.
    Then it is known that
    the Galois group of $\Comp(j, E_j^\mathrm{jinv}[n])$ over $\Comp(j)$
    is isomorphic to $\mathrm{SL}_2(\Z/n\Z)$ for any positive integer $n$
    (see \cite[Section 7.5]{diamond2006first}).
    Since $m$ is prime to $N$,
    we have $\mathrm{SL}_2(\Z/mN\Z) \cong \mathrm{SL}_2(\Z/m\Z) \times \mathrm{SL}_2(\Z/N\Z)$.
    Under this isomorphism,
    the subgroups $\mathrm{SL}_2(\Z/N\Z)$ and $\mathrm{SL}_2(\Z/m\Z)$ of $\mathrm{SL}_2(\Z/mN\Z)$
    correspond to
    the intermediate fields $\Comp(j, E_j^\mathrm{jinv}[m])$ and $\Comp(j, E_j^\mathrm{jinv}[N])$
    of $\Comp(j, E_j^\mathrm{jinv}[mN])/\Comp(j)$, respectively.
    Therefore, we have
    $\Comp(j, E_j^\mathrm{jinv}[m]) \cap \Comp(j, E_j^\mathrm{jinv}[N]) = \Comp(j)$.

    Let $S$ be a level structure of level $\Gamma$ on $E_j^\mathrm{jinv}$.
    Since the invariant $\alpha$ is computable,
    we can define the invariant $\tilde{\alpha} = \alpha(E_j^\mathrm{jinv}, S)$
    in $\overline{\Comp(j)}$,
    and $\tilde{\alpha}$ is contained in $\Comp(j, E_j^\mathrm{jinv}[N])$.
    This implies that $K$ is contained in $\Comp(j, E_j^\mathrm{jinv}[N])$.
    On the other hand,
    we have $\Comp(j, \tilde{\jmath}) \subset \Comp(j, E_j^\mathrm{jinv}[m])$
    since $\tilde{\jmath}$ can be obtained by V\'elu's formula~\cite{Velu1971}.
    Therefore, we have $K \cap \Comp(j, \tilde{\jmath}) = \Comp(j)$.

    Let $\varphi(X, Y)$ be a monic factor of $\Phi_m^\alpha(X, Y)$
    in $\Comp(X)[Y]$.
    Since $\Phi_m^\alpha(X, Y)$ is monic in $Y$ and has coefficients in $\Comp[X]$,
    we have $\varphi(X, Y) \in \Comp[X][Y]$.
    Let $d$ be the degree of $\varphi(X, Y)$ in $Y$.
    As we showed in the proof of Theorem \ref{thm:integral_modular_polynomial},
    we have
    \begin{equation*}
        \res_Z(\res_W(F(W, X), \Phi_m^\alpha(W, Z)), F(Z, Y)) = \gamma\cdot\Phi_m(X, Y)^{\deg J_1}
    \end{equation*}
    for some nonzero constant $\gamma \in \Comp$.
    By the irreducibility of $\Phi_m(X, Y)$ in $\Comp(X)[Y]$
    and the degree consideration,
    we have
    \begin{equation}\label{eq:resultant_decomposition}
        \res_Z(\res_W(F(W, X), \varphi(W, Z)), F(Z, Y)) = \gamma'\cdot\Phi_m(X, Y)^D
    \end{equation}
    for some nonzero constant $\gamma' \in \Comp$,
    where $D = d \cdot \deg J_1 / \psi(m)$.

    Let $\tilde{\alpha}$ be a root of $F(X, j)$ in $\overline{\Comp(j)}$.
    By the definition of modular polynomials,
    we have $\res_Y(\Phi_m^\alpha(\tilde{\alpha}, Y), F(Y, \tilde{\jmath})) = 0$.
    Therefore, we can take the above $\varphi(X, Y)$
    such that $\res_Y(\varphi(\tilde{\alpha}, Y), F(Y, \tilde{\jmath})) = 0$.
    Since $K \cap \Comp(j, \tilde{\jmath}) = \Comp(j)$,
    for any two roots $\tilde{\alpha}_1, \tilde{\alpha}_2$ of $F(X, j)$,
    there exists an automorphism $\sigma$ of $\overline{\Comp(j)}$ over $\Comp(j)$
    such that $\sigma(\tilde{\alpha}_1) = \tilde{\alpha}_2$ and $\sigma(\tilde{\jmath}) = \tilde{\jmath}$.
    This implies that
    $\res_Y(\varphi(\tilde{\alpha}, Y), F(Y, \tilde{\jmath})) = 0$
    for any root $\tilde{\alpha}$ of $F(X, j)$.

    By substituting $X = j$ in the left-hand side of \eqref{eq:resultant_decomposition},
    we have
    \begin{equation*}
        \res_Z(\res_W(F(W, j), \varphi(W, Z)), F(Z, Y)) = \prod_{\tilde{\alpha}} \res_Z(\varphi(\tilde{\alpha}, Z), F(Z, Y)),
    \end{equation*}
    where $\tilde{\alpha}$ runs through all roots of $F(X, j)$.
    Since each factor $\res_Z(\varphi(\tilde{\alpha}, Z), F(Z, Y))$ has a root $\tilde{\jmath}$,
    the equation \eqref{eq:resultant_decomposition} implies that
    $(j - \tilde{\jmath})^{\deg J_1}$ divides $\Phi_m(j, Y)^D$.
    By the irreducibility of $\Phi_m(j, Y)$ in $\Comp(j)[Y]$,
    we have $D = \deg J_1$, thus $d = \psi(m)$.
    Therefore, $\varphi(X, Y) = \Phi_m^\alpha(X, Y)$,
    which shows the irreducibility of $\Phi_m^\alpha(X, Y)$ in $\Comp(X)[Y]$.
\end{proof}

\subsection{Unproved conjectures}\label{subsec:conjectures}
Here, we state two unproved conjectures
related to our modular polynomials.

The first conjecture is about bounds of the coefficients of $\Phi_m^\alpha$.
For this purpose,
we define the \emph{logarithmic height} of a polynomial as follows:
\begin{definition}\label{def:log_height}
    For a polynomial $F(X, Y) = \sum_{i, j} c_{i, j} X^i Y^j \in \Z[X, Y]$,
    we define the \emph{logarithmic height} of $F(X, Y)$ as
    \begin{equation*}
        h(F) = \max_{i, j} \log|c_{i, j}|,
    \end{equation*}
    where $\log$ denotes the natural logarithm,
    and we set $\log 0 = 0$.
\end{definition}
The bound of the logarithmic height of the classical modular polynomial $\Phi_\ell$
for a prime number $\ell$
is given by Theorem 1 and Corollary 9 in \cite{BS2010heigh_modular}:
For every prime number $\ell$, we have
\begin{equation}\label{eq:height_bound_j}
    h(\Phi_\ell) \leq 6\ell\log\ell + 16\ell + \min\{2\ell, 14\sqrt{\ell}\log\ell\}.
\end{equation}
We denote by $B_\ell$ the right-hand side of \eqref{eq:height_bound_j}.
Our conjecture is as follows:
\begin{conjecture}\label{conj:height_bound}
    There exists a bound $L$ depending on $\alpha$
    such that,
    for any prime number $\ell > L$ not dividing $N$, we have
    \begin{equation*}
        h(\Phi_\ell^{\alpha}) \leq B_\ell / \deg(J_1).
    \end{equation*}
\end{conjecture}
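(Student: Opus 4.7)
The plan is to transfer the bound $h(\Phi_\ell)\le B_\ell$ to $\Phi_\ell^\alpha$ using the resultant identity
$$\res_Z\bigl(\res_W(F(W,X),\Phi_\ell^\alpha(W,Z)),\,F(Z,Y)\bigr)=\gamma\cdot\Phi_\ell(X,Y)^{\deg J_1}$$
established in the proof of Theorem~\ref{thm:integral_modular_polynomial}, where $F(X,Y)=J_1(X)-J_0(X)Y$. The factor $1/\deg J_1$ appearing in the conjectured right-hand side reflects the fact that $\deg J_1=[\SL_2(\Z):\Gamma]$ by Remark~\ref{rem:degree_J1}, so $X(\Gamma)$ is a degree-$\deg J_1$ cover of $X(1)$ and the coefficients of $\Phi_\ell^\alpha$ should be of size roughly a $\deg J_1$-th root of those of $\Phi_\ell$.

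I would first pass from logarithmic height to Mahler measure via Mahler's and Gelfond's inequalities, losing only an additive $O(\psi(\ell))$ term. The core of the argument would then be a fiber-by-fiber analysis of $\log M(\Phi_\ell^\alpha)$: for $x$ on the unit circle, the roots $\sigma_i(x)$ of $\Phi_\ell^\alpha(x,Y)$ satisfy $j_\alpha(\sigma_i(x))=j'_i(x)$, where $j'_i(x)$ ranges over the roots of $\Phi_\ell(j_\alpha(x),Y)$. Using the degree-$\deg J_1$ equation $J_1(\sigma)-J_0(\sigma)j=0$, each $\log^{+}|\sigma_i(x)|$ can be compared to $\log^{+}|j'_i(x)|$ via the behavior of $j_\alpha$ on the torus and at infinity. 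Integrating these pointwise comparisons against Jensen's formula should yield an estimate of the form
$$\deg J_1\cdot\log M(\Phi_\ell^\alpha)\le \log M(\Phi_\ell)+C_\alpha\cdot\psi(\ell),$$
with $C_\alpha$ depending only on $\alpha$ through the heights and degrees of $J_0$ and $J_1$. Converting back to logarithmic height and absorbing the $O(\psi(\ell))$ correction into the main term $B_\ell/\deg J_1\sim(6/\deg J_1)\ell\log\ell$, which happens exactly for $\ell$ larger than some $L$ depending on $\alpha$, would give the conjectured bound.

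The main obstacle is the pointwise comparison between $\sigma$ and $j_\alpha(\sigma)=J_1(\sigma)/J_0(\sigma)$. At infinity one only has $\log|\sigma|\sim\log|j|/(\deg J_1-\deg J_0)$, so the apparent scaling factor is $\deg J_1-\deg J_0$ rather than $\deg J_1$; the missing $\deg J_0$ must come from the finite zeros of $J_0$, which contribute additional poles of $j_\alpha$ whose Mahler-measure accounting is subtle. A conceptually cleaner alternative would be to bypass Mahler measure and work directly with Arakelov heights of divisors on $X(\Gamma)^2$ pulled back from $X(1)^2$, exploiting the multiplicativity of arithmetic degrees under the finite cover $X(\Gamma)\times X(\Gamma)\to X(1)\times X(1)$ to extract the factor $\deg J_1$ automatically; however, both routes require machinery significantly beyond what the rest of the paper employs, which is presumably why the bound is stated only as a conjecture. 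Moreover, since the naive Mahler-measure-of-resultant inequalities point in the wrong direction (they give a lower bound on $\log M(\Phi_\ell^\alpha)$ in terms of $\log M(\Phi_\ell)$ rather than an upper bound), the key technical step is to exploit either the explicit root structure or the fact that $\Phi_\ell^\alpha$ is an \emph{irreducible} factor of a polynomial controlled by $\Phi_\ell$, rather than merely a divisor in a resultant identity.
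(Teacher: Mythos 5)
This statement is a \emph{conjecture} in the paper: the authors do not prove it, and they explicitly frame their discussion following it as a heuristic, not a proof. Their justification is precisely the resultant identity you quote,
$\res_Z(\res_W(F(W,X),\Phi_\ell^\alpha(W,Z)),F(Z,Y))=\gamma\cdot\Phi_\ell(X,Y)^{\deg J_1}$,
combined with a rough degree-and-height count: the left-hand side should have logarithmic height about $(\deg J_1)^2\cdot h(\Phi_\ell^\alpha)$ and the right-hand side about $\deg J_1\cdot h(\Phi_\ell)$, which suggests the conjectured scaling. They stop there.

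Your proposal starts from the same identity and the same $[\SL_2(\Z):\Gamma]=\deg J_1$ intuition, so it is aligned with the paper's reasoning; but you correctly diagnose the core obstruction that the paper passes over silently, namely that resultant height inequalities control the height of the \emph{output} (the right-hand side) in terms of the \emph{inputs}, which gives a lower bound on $h(\Phi_\ell^\alpha)$ rather than the upper bound the conjecture asserts. Your second observation, that at the cusp $j_\alpha$ scales by $\deg J_1-\deg J_0$ rather than $\deg J_1$ so the finite zeros of $J_0$ must be accounted for, is also a genuine obstacle and not addressed in the paper. Your alternatives (fiberwise Jensen estimates; Arakelov heights on the finite cover $X(\Gamma)^2\to X(1)^2$) are the standard routes one would try, and your assessment that both require machinery well beyond the paper's toolbox is accurate. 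In short, there is no proof in the paper for you to match; your analysis is a reasonable and somewhat sharper elaboration of the heuristic the authors give, and your conclusion that a rigorous argument is out of reach here is consistent with the authors' decision to state this only as a conjecture. The empirical data in the paper (Section~\ref{sec:experiment}) even shows the bound fails for small $\ell$ in the Hessian case, confirming that the threshold $L$ you discuss is genuinely needed.
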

This conjecture is a natural generalization of a heuristic
stated in the equation (23) in \cite{BS2010heigh_modular}.
Note that $\deg(J_1) = [\SL : \Gamma]$
as we mentioned in Remark \ref{rem:degree_J1}.
We can justify this conjecture by the following discussion.
As we showed in the proof of Theorem \ref{thm:integral_modular_polynomial},
we have
\begin{equation*}
    \res_Z(\res_W(F(W, X), \Phi_\ell^\alpha(W, Z)), F(Z, Y)) = \gamma \cdot \Phi_\ell(X, Y)^{\deg(J_1)},
\end{equation*}
where $F(X, Y) = J_1(X) - J_0(X) Y$
and $\gamma$ is some integer independent of $\ell$.
Roughly speaking,
the logarithmic height of the left-hand side
is about $\deg(J_1)^2 \cdot h(\Phi_\ell^\alpha)$,
and that of the right-hand side is about $\deg(J_1) \cdot h(\Phi_\ell)$.
This gives a heuristic reason for the conjecture.

The second conjecture is about the evaluation of our modular polynomials
at a point not contained in the image of the invariant.
\begin{conjecture}\label{conj:evaluation_at_non_image}
    For any positive integer $m$ prime to $N$,
    there exists a permutation $\sigma$ on $\Comp\setminus\image \alpha$
    such that
    \begin{equation*}
        \Phi_m^{\alpha}(z, Y) = (Y - \sigma(z))^{\psi(m)},
    \end{equation*}
    for any $z \in \Comp\setminus\image \alpha$.
\end{conjecture}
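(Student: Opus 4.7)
The plan is to identify the finite set $\Comp \setminus \image \alpha$ with the finite cusps of the compactified modular curve $X(\Gamma)$ under the $\alpha$-parameterization, and then prove that the Hecke correspondence $T_m$ is totally ramified at each such cusp. First, for $z_0 \in \Comp \setminus \image \alpha$, the defining conditions on $\alpha$ force $J_0(z_0) = 0$ and $J_1(z_0) \neq 0$, so $j_\alpha(z_0) = \infty$; hence $z_0$ corresponds to a cusp of $X(\Gamma)$ lying above the cusp $j = \infty$ of $X(1)$, with ramification index equal to the multiplicity of $z_0$ as a root of $J_0$. In particular, the finite set $\Comp \setminus \image \alpha$ is in bijection with those cusps of $X(\Gamma)$ at which $\alpha$ takes a finite value.

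I would then argue that every root $\tau$ of $\Phi_m^\alpha(z_0, Y)$ lies in $\Comp \setminus \image \alpha$. Using the resultant identity
\[
    \res_Z\bigl(\res_W(F(W, X), \Phi_m^\alpha(W, Z)), F(Z, Y)\bigr) = \gamma \cdot \Phi_m(X, Y)^{\deg J_1}
\]
from the proof of \Cref{thm:integral_modular_polynomial}, with $F(W, X) = J_1(W) - J_0(W)X$, one deduces that $j_\alpha(\tau)$ must be a root of $\Phi_m(j_\alpha(z_0), \cdot) = \Phi_m(\infty, \cdot)$; since the classical modular polynomial sends $j = \infty$ to itself with full multiplicity $\psi(m)$ (interpreted projectively in $\mathbb{P}^1\times\mathbb{P}^1$), this forces $j_\alpha(\tau) = \infty$, i.e., $J_0(\tau) = 0$, as desired.

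The crucial step is to show that $\Phi_m^\alpha(z_0, Y)$ has a single distinct root, equivalently, that $T_m$ is totally ramified at each cusp of $X(\Gamma)$. I would prove this via a Tate curve analysis: near a cusp, the universal formal deformation is a Tate curve $\mathbb{G}_m/q^\Z$ over $\Comp((q^{1/e}))$ equipped with the appropriate level $\Gamma$-structure, and the $\psi(m)$ cyclic subgroups of order $m$ (indexed by $\mathbb{P}^1(\Z/m\Z)$) yield quotient Tate curves with parameters either $q^m$ or $q^{1/m}$ (up to a root of unity) together with induced level structures. A case analysis for each of $\Gamma_0(N), \Gamma_1(N), \Gamma(N)$, using the standard cusp parameterizations and explicit Hecke coset representatives, should then establish that all $\psi(m)$ induced level structures specialize to the same cusp of $X(\Gamma)$, yielding a well-defined $\sigma(z_0)$.

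Finally, $\sigma$ is a permutation of the finite set $\Comp \setminus \image \alpha$: the dual Hecke correspondence, obtained by exchanging source and target of the $m$-isogeny, provides an inverse $\sigma^{-1}$, and in the symmetric cases of \Cref{thm:symmetry_of_modular_polynomial} one gets $\sigma^{-1}$ simply by applying the argument above to $\Phi_m^\alpha(Y, z_0)$. The main obstacle is the totally ramified claim in the preceding paragraph: while the Tate curve framework is classical, uniformly verifying that all $\psi(m)$ quotient level structures coincide at a single cusp of $X(\Gamma)$ requires detailed combinatorial bookkeeping that grows with the divisor structure of $m$ and $N$, and the Weil-pairing constraints inherent to $\Gamma(N)$ and $\Gamma_1(N)$ level structures make a uniform treatment particularly delicate.
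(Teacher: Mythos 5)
This statement is presented in the paper as an \emph{unproved conjecture} (\Cref{conj:evaluation_at_non_image}): the authors offer only experimental evidence, and explicitly warn that the one known proof of this kind (for the Legendre $\lambda$-invariant, via the invariance of the cusps $0$ and $i_\infty$ of $X(2)$ under specific elements of $\operatorname{GL}_2(\Z)$) does not transfer directly to the general setting. There is therefore no paper proof to compare against; you are attempting something the paper leaves open.

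Your outline is a reasonable strategy, but it has a genuine, load-bearing gap that you yourself flag: the claim that every cyclic $m$-isogeny emanating from a cusp of $X(\Gamma)$ lands on a \emph{single} cusp is precisely the content of the conjecture, and it is not verified. For $\Gamma(N)$ and $\Gamma_1(N)$ the isogeny acts nontrivially on the level structure (e.g.\ $\varphi(P)=[m]P'$, $\varphi(Q)=Q'$ for $\Gamma(N)$), so different factorizations $m = a b$ giving Tate parameter $\zeta q^{a/b}$ also twist the image of the level structure, and it is far from automatic that the resulting cusps coincide; the paper's own experimental data for Hessian curves, where $\sigma(3\omega^i)=3\omega^{\ell i}$ depends on $\ell \bmod 3$, shows that the target cusp genuinely moves with $m$. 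Your Tate-curve sketch also oversimplifies the composite case: the $\psi(m)$ cyclic subgroups of order $m$ in the Tate curve produce quotients with parameters $\zeta q^{a/b}$ for all factorizations $ab=m$, not merely $q^m$ and $q^{1/m}$, and the bookkeeping must account for all of these. Finally, the step ``$j_\alpha(\tau)$ must be a root of $\Phi_m(\infty,\cdot)$'' needs care: the resultant identity from the proof of \Cref{thm:integral_modular_polynomial} is derived from values $z\in\image\alpha$ (where $F(z,j)$ has roots), while at $z_0\notin\image\alpha$ one has $F(z_0,j)\neq 0$ for all $j$, so the specialization of the resultant degenerates rather than yielding the conclusion directly. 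A limiting or symmetry-based argument can plausibly close this last step (symmetry would require the hypotheses of \Cref{thm:symmetry_of_modular_polynomial}), but the totally-ramified claim remains the open heart of the conjecture.
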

It is known that
the modular polynomial for the Legendre $\lambda$-invariant
satisfies this property with the identity permutation
(see exercise 6 in Chap. 4.4 in \cite{PiAGM}).
The proof is based on the fact that the cusps $0$ and $i_\infty$ of $X(2)$
are invariant under the action of certain elements of $\operatorname{GL}_2(\Z)$.
Thus, this proof cannot be directly applied to our modular polynomials.
Our experimental results show that
the permutation $\sigma$ in Conjecture \ref{conj:evaluation_at_non_image}
could be nontrivial in general.
See Section \ref{sec:experiment} for details.

\section{Algorithm}\label{sec:algorithms}



In this section, we present an algorithm to compute
the modular polynomial $\Phi_{m}^{\alpha}$ of order $m$ for the invariant $\alpha$.
To do so, we assume that the invariant $\alpha$ is computable.
Then,
we can define the invariant $\alpha$ over a finite field whose characteristic does not divide $N$
by considering the reduction of the rational function in \Cref{def:computable}.
Therefore, we denote by $\alpha(E, *)$
the invariant $\alpha$ of an enhanced elliptic curve $(E, *)$ over a finite field.

In the following, we restrict our attention to the case where $m=\l$ is an odd prime number.
The modular polynomial $\Phi_m^\alpha$ for a composite number $m$ can be computed from
that for prime numbers dividing $m$ as in the classical case (see \cite[\S 69]{Weber1908}).
In the case where $\l=2$,
we can compute $\Phi_2^\alpha$ by using
the polynomial-interpolation method of \cite{CL2005modular},
whose asymptotic complexity in $\l$ is larger than that of
the algorithm for odd primes $\l$ presented in this section.

Our algorithm is based on the algorithm of Kunzweiler--Robert~\cite{deformation_Kunzweiler_2024}
for the classical modular polynomials.
Therefore,
we first briefly review the deformation theory of elliptic curves,
which is required to understand the Kunzweiler--Robert algorithm,
and then explain their algorithm to compute the classical modular polynomial.
Finally, we present our algorithm to compute the modular polynomial $\Phi_{\l}^{\alpha}$.

\subsection{Deformation}\label{subsec:deformation}

We first briefly review the deformation theory of elliptic curves and related algorithms. 
For more details, see \cite{deformation_Kunzweiler_2024}.

Throughout this section, 
let $p\ge 5$ and $\l\ge 3$ be different prime numbers,
and $R$ be an Artinian ring $\Fps[\epsilon]/(\epsilon^{\l+2})$.  

\begin{definition}  
    An \emph{elliptic curve over $R$} is a pair $(\dE,\tilde{0})$ where $\dE$ is a smooth,
    proper, and geometrically connected curve of relative dimension~$1$ over
    $\Spec R$, and 
    $\tilde{0} : \operatorname{Spec}R \to \dE$
    is a section, called the \emph{origin}, such that each geometric fiber of
    $\dE \to \operatorname{Spec}R$ is a smooth projective curve of genus~$1$, and the
    section $\tilde{0}$ endows every fiber with the structure of a commutative group
    scheme.
\end{definition}

The unique maximal ideal of $R$ induces the surjection to the residue field $R\to \Fps$, 
which induces the closed immersion
$\Spec \Fps \to \Spec R$. 
For an elliptic curve $\dE/R$, 
the elliptic curve $\dE_{\Fps}:=\dE\times_R \Spec \Fps$ over $\Fps$ is called a \emph{special fiber}.  

\begin{definition}
    For an elliptic curve $E$ over $\Fps$, 
    a \emph{deformation} of $E$ over $R$ is an elliptic curve $\dE$ over $R$ such that 
    the special fiber $\dE_{\Fps}$ is isomorphic to $E$ over $\Fps$.
\end{definition}

\begin{theorem}[{\cite[Remark 2.4]{deformation_Kunzweiler_2024}}]\label{thm:unique_lift_point}
For a deformation $\dE/R$ of $E/\Fps$ and a positive integer $N$ prime to $p$,
there exists a natural isomorphism $E[N]\cong \dE[N]$. 
Thus, for $P\in E[N]$, there exists a corresponding point $\tilde{P}\in \dE[N]$. 
\end{theorem}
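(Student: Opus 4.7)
The plan is to reduce the claim to the general principle that finite étale group schemes over a henselian local ring are equivalent to finite étale group schemes over the residue field. Since $R = \mathbb{F}_{p^s}[\epsilon]/(\epsilon^{\ell+2})$ is an Artinian local ring, it is henselian, and since $N$ is prime to $p$, it is invertible in $R$.

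The first step is to show that $\dE[N]$ is a finite étale group scheme over $R$ of rank $N^2$. For any smooth commutative group scheme over a base on which $N$ is invertible, the multiplication-by-$N$ morphism is étale; applied to $\dE\to\Spec R$, this makes $[N]:\dE\to\dE$ a finite étale isogeny (it is finite because it is so on the special fiber and $R$ is Artinian, or by checking that it is proper and quasi-finite). Its kernel $\dE[N]$ is therefore finite étale over $\Spec R$ of constant rank $N^2$.

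The second step is to apply the equivalence of categories between the category of finite étale schemes over $R$ and the category of finite étale schemes over $\mathbb{F}_{p^s}$, given by base change along the residue surjection $R\to\mathbb{F}_{p^s}$; this is a standard consequence of the henselian property of $R$. Since the formation of kernels commutes with base change, the special fiber of $\dE[N]$ is $\dE_{\mathbb{F}_{p^s}}[N]\cong E[N]$. The equivalence then yields a canonical isomorphism $\dE[N]\cong E[N]\times_{\mathbb{F}_{p^s}} R$ of finite étale group schemes over $R$.

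For the pointwise statement, one uses that an étale $R$-scheme whose special fiber is constant (in the sense that it consists of $N^2$ copies of $\Spec \mathbb{F}_{p^s}$, which holds after base change to $\overline{\mathbb{F}_{p^s}}$, or directly in view of the equivalence above) is itself a disjoint union of copies of $\Spec R$, so the reduction map $\dE[N](R)\to \dE[N](\mathbb{F}_{p^s})$ is a bijection. Given $P\in E[N]$, one defines $\tilde{P}$ as its unique preimage, and the identification $E[N]\cong\dE[N]$ as groups is induced by this bijection together with the natural isomorphism of the previous paragraph. The main obstacle is being precise about the étaleness of $[N]$ for elliptic curves over an Artinian base and citing the correct form of the equivalence for finite étale algebras over a henselian local ring; once these are granted, the proof is formal.
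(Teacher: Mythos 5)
The paper does not prove this statement: it is stated as a citation to \cite[Remark 2.4]{deformation_Kunzweiler_2024}, and no argument is given in the present text. So there is no in-paper proof to compare against. That said, your proof is correct and is precisely the standard deformation-theoretic argument underlying the cited remark. The two key ingredients are exactly as you identify them: (i) since $N$ is invertible on $\operatorname{Spec} R$, the isogeny $[N]:\dE\to\dE$ is finite flat of degree $N^2$ and \'etale, so $\dE[N]$ is a finite \'etale group scheme over $R$; (ii) $\operatorname{Spec}\Fps\hookrightarrow\operatorname{Spec} R$ is a nilpotent closed immersion, so base change induces an equivalence on (finite) \'etale schemes, and since kernels commute with base change this identifies $\dE[N]$ with $E[N]$. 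Your appeal to the henselian property is fine, but note that for this Artinian $R$ the crucial fact is the nilpotency of the maximal ideal, which gives the equivalence on \emph{all} \'etale $R$-schemes (topological invariance of the small \'etale site), not only finite ones; this also streamlines your final step, since the bijectivity of $\dE[N](R)\to\dE[N](\Fps)$ follows directly from the formal \'etaleness of $\dE[N]\to\operatorname{Spec} R$ applied to the nilpotent thickening, without needing to argue that the scheme decomposes as a disjoint union of copies of $\operatorname{Spec} R$. Either route is valid; the infinitesimal-lifting phrasing is a bit more economical and avoids the implicit hypothesis that $E[N]$ be constant over $\Fps$ (which, while true in the paper's intended applications, is not part of the statement).
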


Let $(E, S)$ be an enhanced elliptic curve over $\Fps$,
where $S$ is a level structure of level $\Gamma$ on $E$.
\Cref{thm:unique_lift_point} assures us that we can define the enhanced elliptic curve $(\dE, \tilde{S})$ over $R$.
By using the rational function in \Cref{def:computable},
we can define the invariant $\alpha(\dE, \tilde{S})$ over $R$. 

For an isogeny $f:E_1\to E_2$ over $\Fps$ and  deformations $\dE_1,\dE_2$ of $E_1,E_2$, 
we say an isogeny $\tilde{f}:\dE_1\to \dE_2$ over $R$ is a \emph{lift} of $f$
if the isogeny
$\tilde{f}_{\Fps}:\dE_{1,\Fps}\to \dE_{2,\Fps}$
between the special fibers
is $f$ under some isomorphisms $\dE_{1,\Fps}\cong E_1$ and $\dE_{2,\Fps}\cong E_2$.
The following theorem states the existence and uniqueness of lifts of isogenies.
\begin{theorem}
[{\cite[Proposition 2.8]{deformation_Kunzweiler_2024}}]
    For an isogeny $f:E_1\to E_2$ over $\Fps$ and a deformation $\dE_1$ of $E_1$ over $R$,
    there is a unique deformation $\dE_2$ of $E_2$ over $R$ such that $f$ lifts to an isogeny $\tilde{f}:\dE_1\to \dE_2$ over $R$. 
\end{theorem}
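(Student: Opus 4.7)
The plan is to reduce the assertion to the unique lifting of \'etale subgroup schemes, which is available because in our algorithmic context $\deg f$ will be prime to $p$ (so $f$ is separable and its kernel is \'etale). Set $N := \deg f$ and $K := \ker f \subset E_1[N]$, a finite \'etale subgroup scheme of order $N$.

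First I would transport $K$ across the natural isomorphism $E_1[N] \cong \dE_1[N]$ supplied by \Cref{thm:unique_lift_point}, producing a finite \'etale subgroup scheme $\widetilde{K} \subset \dE_1$ of order $N$ over $R$ whose special fiber is $K$. Next I would construct $\dE_2$ as the fppf quotient $\dE_1 / \widetilde{K}$. Because $\widetilde{K}$ is finite locally free and acts freely by translation on $\dE_1$, the quotient exists as a scheme over $R$, and the canonical projection $\tilde{f} : \dE_1 \to \dE_2$ is faithfully flat of degree $N$. By descent, $\dE_2$ inherits smoothness, properness, and geometric connectedness of its fibers from $\dE_1$; pushing $\tilde{0}$ forward provides a section; and reducing mod the maximal ideal $(\epsilon)$ yields $E_1/K \simeq E_2$. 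Hence $\dE_2$ is a deformation of $E_2$ and $\tilde{f}$ lifts $f$. An equivalent, more hands-on construction is to apply V\'elu's formulas to a Weierstrass model of $\dE_1$: since $\widetilde{K}$ is \'etale and its non-identity points stay away from $\tilde{0}$ over all of $\Spec R$, the denominators appearing in V\'elu's formulas are units in $R$, so the formulas produce $\dE_2$ and $\tilde{f}$ explicitly.

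For uniqueness, let $(\dE_2', \tilde{f}')$ be another lift. The degree of $\tilde{f}'$ is locally constant on $\Spec R$, hence equals $N$, so $\ker \tilde{f}'$ is a finite flat subgroup scheme of $\dE_1$ of order $N$ reducing to $K$. Since $N$ is prime to $p$, subgroup schemes of $\dE_1[N]$ are in canonical bijection with subgroup schemes of $E_1[N]$ (this is the same input that powers \Cref{thm:unique_lift_point}), forcing $\ker \tilde{f}' = \widetilde{K}$. The universal property of the quotient then yields a canonical isomorphism $\dE_2 \xrightarrow{\sim} \dE_2'$ identifying $\tilde{f}$ with $\tilde{f}'$.

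The main obstacle is rigorously establishing the quotient $\dE_1 / \widetilde{K}$ as an elliptic curve over the Artinian base $R$ rather than over a field, and verifying that $\tilde{f}$ is an isogeny in the sense of Subsection~\ref{subsec:deformation}. This can be handled either by invoking the general theory of quotients by finite locally free group schemes on abelian schemes, or concretely by confirming that V\'elu's formulas remain valid because the denominators are units throughout $R$; either route requires some care with flatness, but the \'etale hypothesis on $\widetilde{K}$ makes both feasible.
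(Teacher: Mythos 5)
The paper does not prove this statement; it is quoted verbatim from Kunzweiler--Robert~\cite[Proposition~2.8]{deformation_Kunzweiler_2024}, so there is no in-paper proof to compare against. Judged on its own terms, your argument is sound but proves a strictly weaker statement than the one quoted. You restrict at the outset to $\deg f$ prime to $p$, so that $\ker f$ is finite \'etale; you then transport the kernel across the isomorphism $E_1[N]\cong\dE_1[N]$, form the quotient, and observe that V\'elu's formulas have unit denominators. All of this is correct, and the uniqueness argument via rigidity of prime-to-$p$ subgroup schemes is also fine.

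The gap is that the cited proposition is stated for an arbitrary isogeny $f$, with no separability hypothesis. When $p\mid\deg f$ (e.g.\ Frobenius or Verschiebung, which are unavoidable on the supersingular curves actually used in the algorithm), $\ker f$ is a connected or non-\'etale finite flat group scheme, the isomorphism $E_1[N]\cong\dE_1[N]$ of \Cref{thm:unique_lift_point} is not available for this $N$, and V\'elu denominators need not be units. In that regime one must argue via the deformation theory of $p$-divisible groups: an isogeny $f$ induces a map on deformation functors, and the fact that $\hat f\circ f=[\deg f]$ deforms canonically over any base forces this map to be an isomorphism; this is the Serre--Tate (ordinary case) / Grothendieck--Messing (general case) mechanism, and it is qualitatively different from \'etale subgroup lifting. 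So your proposal is a correct and self-contained proof of the special case the algorithm needs, but it does not establish the proposition as stated; if you want to keep the \'etale route, you should explicitly weaken the hypothesis to ``$f$ separable'' (equivalently $p\nmid\deg f$), or else replace the lifting of subgroup schemes by the deformation-functor argument.
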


\subsection{Kunzweiler--Robert algorithm}\label{subsec:KRalgorithm}
The Kunzweiler--Robert algorithm uses the CRT method,
i.e., computing $\Phi_\l \bmod p$ for many primes $p$
and lifting them to $\Z[X,Y]$ by the CRT.

A rough sketch of the Kunzweiler--Robert algorithm to compute
the classical modular polynomial $\Phi_{\l} \bmod p$ is as follows.
\begin{enumerate}
    \item Let $E/\Fps$ be an elliptic curve such that $E[\l]\subset E(\Fps)$.
    \item Take a deformation $\dE/R$ of $E$ such that $j(\dE)=j(E)+\epsilon$.
    \item Compute all isogenies $f:E\to E'$ of degree $\l$ over $\Fps$.
    \item For each isogeny $f:E\to E'$ of degree $\l$, compute the deformation $\dE'$ of $E'$ such that $f$ lifts to an isogeny $\tilde{f}:\dE\to \dE'$.
        \label{item:step_compute_deformation} 
    \item Let $\varphi(Y):=\prod_{\dE'}(Y-j(\dE'))\in R[Y]$, where $\dE'$ runs through all deformations obtained in the previous step.
    \item By substituting $\epsilon=X-j(E)$, obtain $\Phi_{\l}(X,Y) \pmod p$.
\end{enumerate}
Since $R=\Fps[\epsilon]/(\epsilon^{\l+2})$,
the polynomial obtained in the last step is
congruent to $\Phi_{\l}(X,Y)$ modulo $(X-j(E))^{\l+2}$,
and hence it is equal to $\Phi_{\l}(X,Y)$ in $\Fp[X,Y]$.

The core of this algorithm is step \ref{item:step_compute_deformation}.
Suppose that $E[\l] \subset E(\Fps)$.
Although we can compute the lift $\tilde{f}$ in step \ref{item:step_compute_deformation}
by V\'elu's formula \cite{Velu1971},
this leads to an inefficient algorithm.
Since the complexity of one multiplication in $R$ is $\tilde{O}(\l)$ operations in $\Fps$,
the total complexity to compute $\Phi_{\l} \bmod p$
becomes $\tilde{O}(\l^{\frac{5}{2}})$ not $\tilde{O}(\l^2)$
even if we use the $\sqrt{\vphantom{l}}$\'elu's formula \cite{BFLS2020}.

Instead, we use \emph{Kani's isogeny diamond}~\cite{Kan1997number}.
Suppose that there exists an endomorphism $\gamma\in \End(E)$
such that $\deg\gamma + \l = 2^n$ and $\l \nmid \deg \gamma$
for some $n \in \mathbb{N}$.
Then we have the following diagram of isogenies over $\Fps$ and their lifts over $R$:
\begin{equation*}\label{eq:diagram_l3}
    \begin{tikzcd}
        E \arrow[r,"f"] \arrow[d,"\gamma"'] &
        E' \arrow[d,"\gamma'"] \\
        E \arrow[r,"f'"'] &
        E'',
    \end{tikzcd}
    \hspace{3em}
    \begin{tikzcd}
        \dE \arrow[r,"\tilde{f}"] \arrow[d,"\tilde{\gamma}"'] &
        \dE_1 \arrow[d,"\tilde{\gamma}'"] \\
        \dE_2 \arrow[r,"\tilde{f}'"'] &
        \dE_3,
    \end{tikzcd}
\end{equation*}
where $\ker\gamma'=f(\ker\gamma)$ and $\ker f' = \gamma(\ker f)$.
It is known that the isogenies in the left diagram can be obtained by computing
a $(2^n, 2^n)$-isogeny $E \times E'' \to E' \times E$
(see Theorem 1 in \cite{EC:MMPPW23} for more details).
Based on this,
\cite{deformation_Kunzweiler_2024} provides an algorithm
to compute the deformation
$\dE_1,\dE_2,\dE_3$
from $E, E', E'', \dE$,
and a generator of the kernel of the $(2^n,2^n)$-isogeny $E \times E'' \to E' \times E$,
which is called $\mathtt{lift\_isogeny\_diamond}$~\cite[Algorithm 4]{deformation_Kunzweiler_2024}.
The complexity of $\mathtt{lift\_isogeny\_diamond}$ is $O(n)$ operations in $R$,
thus the total complexity to compute $\Phi_{\l}\pmod p$ becomes $\tilde{O}(n\l^2)$.
By taking $2^n \approx \l$,
we achieve the complexity $\tilde{O}(\l^2)$.

To realize this algorithm,
we choose the elliptic curve $E$ as a curve over $\Fp$
with complex multiplication by $\Z[2\sqrt{-1}]$
\footnote{
    We cannot use a curve with $j$-invariant $0$ or $1728$.
    The reason is explained in \cite[Example 2.7]{deformation_Kunzweiler_2024}.
},
and $p$ such that there exists integers $n,a,b$ such that $2^n - \l = a^2 + 4b^2$
and $2^n\l \mid (p+1)$ if $\l \equiv 3 \pmod 4$.
Then, $E$ is supersingular and $E(\Fps) = E[p + 1]$ contains $E[\l]$ and $E[2^n]$.
Since $E$ has complex multiplication by $\Z[2\sqrt{-1}]$,
we can take $\gamma \in \End(E)$ with $\deg \gamma = a^2 + 4b^2 = 2^n - \l$.
If $\l \equiv 1 \pmod 4$,
then the equation $2^n - \l = a^2 + b^2$ never holds.
In this case, we use the equation $2^n - 3\l = a^2 + 4b^2$ instead,
and use the diagram shifted by isogenies of degree $3$
(see \cite[Appendix A]{deformation_Kunzweiler_2024} for more details).
This leads to the definition of a set of primes $\Pl$:
\[\Pl:=\{p>11~ \text{prime}\mid \exists n,a,b~\text{with}~2^n-\l=a^2+4b^2~\text{and}~2^n c_{\l}\l|(p+1)\},\]
where $c_{\l}=1$ if $\l\equiv 3\pmod 4$ and $c_{\l}=3$ if $\l\equiv 1\pmod 4$.

Based on the above discussion,
Kunzweiler--Robert show that,
for any prime $p \in \Pl$ such that $\log p \in O(\log \l)$,
the complexity of computing $\Phi_{\l} \bmod p$ is
$O(\l^2 \log^3 \l \log\log \l)$ bit operations
(Theorems 4.4 and 4.5 in \cite{deformation_Kunzweiler_2024}).
Since the logarithmic height of $\Phi_{\l}$ is bounded by
$B_\l \in O(\l\log\l)$ \eqref{eq:height_bound_j},
it suffices to compute $\Phi_{\l} \bmod p$ for $O(\l)$ primes $p$
with $\log p \in O(\log \l)$.
For this, they assume the following heuristic:
\begin{heuristic}[{\cite[Heuristic 4.1]{deformation_Kunzweiler_2024}}]\label{heur:cardinality_n}
    Let $\l$ be a positive integer and $c_{\l}$ be as above.
    Then, we have
    \begin{equation*}
        \#\{n \in \mathbb{N} \mid 2^n - c_{\l}\l = a^2 + 4b^2,\ n \leq x\} \approx x/\sqrt{x}.
    \end{equation*}
\end{heuristic}
Under this heuristic,
there are $\Omega(\l)$ primes $p \in \Pl$ with $\log p \in O(\log \l)$
(Lemma 4.3 in \cite{deformation_Kunzweiler_2024}),
and we obtain that
the total complexity to compute the classical modular polynomial $\Phi_{\l}$ is
$O(\l^3 \log^3 \l \log\log \l)$ bit operations.

Moreover, 
we can eliminate \Cref{heur:cardinality_n}
by using the trick of a dimension 8 embedding as explained in \cite[\S 3.5]{deformation_Kunzweiler_2024}.
Then, we use 
$
    \Psl:=\{p>11~ \text{prime}\mid 2^n\l|(p+1)\}
$
where $n=\lceil\log_2(\l)\rceil$ 
instead of $\Pl$. 
See Theorem 5.2 in \cite{deformation_Kunzweiler_2024} for more details.

\subsection{Algorithms to compute modular polynomials for invariants}
\label{subsec:algorithms_enhanced}
We present an algorithm to compute the modular polynomials $\Phi_{\l}^{\alpha}(X,Y)\in\Z[X,Y]$.

First, we explain how to compute $\Phi_{\l}^{\alpha} \bmod p$.
For simplicity,
we consider the case where $\Gamma=\Gamma_1(N)$.
The other cases are obtained similarly.
Our algorithm to compute $\Phi_{\l}^{\alpha} \bmod p$ is as follows:
\begin{enumerate}
    \item Let $(E, P)$ be an enhanced elliptic curve over $\Fps$.
    \item Take a lift $\tilde{a}$ such that $\tilde{a} = \alpha(E, P) + \epsilon$,
        and let $\dE$ be a deformation of $E$ such that $j(\dE) = j_{ \alpha }(\tilde{a})$.
    \item Compute all isogenies $f:E\to E'$ of degree $\l$ and the images $P'=f(P)$.
    \item For each isogeny $f:E\to E'$ of degree $\l$,
        compute the deformation $\dE'$ of $E'$ such that $f$ lifts to an isogeny $\tilde{f}:\dE\to \dE'$.
    \item Compute the lift $\tilde{a}'$ of $\alpha(E', P')$ such that $j_{\alpha}(\tilde{a}') = j(\dE')$.
        \label{item:step_compute_lifted_invariant}
    \item Let $\varphi(Y):=\prod_{\tilde{a}'}(Y - \tilde{a}')\in R[Y]$, where $\tilde{a}'$
        runs through all lifts obtained in the previous step.
    \item By substituting $\epsilon= X - \alpha(E,P)$, obtain $\Phi_{\l}^\alpha(X,Y) \pmod p$.
\end{enumerate}
The main difference from the Kunzweiler--Robert algorithm
is step (\ref{item:step_compute_lifted_invariant}).
Thus, we explain how to compute the lift $\tilde{a}'$ in this step.

Let $F(X) = J_1(X) - j(\dE') J_0(X) \in R[X]$.
By the definition of $\alpha$,
the condition $j_{\alpha}(\tilde{a}') = j(\dE')$
is equivalent to $F(\tilde{a}') = 0$.
Therefore, if $\frac{d F}{d X}(\alpha(E', P')) \not\equiv 0 \pmod{(\epsilon)}$,
we can compute $\tilde{a}'$ from $\alpha(E', P')$ and $j(\dE')$
by using $\mathtt{Newton\_lift}$~\cite[Algorithm 1]{deformation_Kunzweiler_2024}. 
The equation $\frac{d F}{d X}(\alpha(E', P')) \equiv 0 \pmod{(\epsilon)}$ holds
if and only if $J_1(X) - j(E') J_0(X)$ has a multiple root at $X = \alpha(E', P')$.
This happens only when $j(E')$ is $1728$ or $0$.
If this case occurs, we can simply discard $p$ and choose another prime.

Note that we have $\tilde{a}' = \alpha(\dE', \tilde{P}')$,
where $\tilde{P}'$ is the unique lift of $P'$ in $\dE'$ by \Cref{thm:unique_lift_point},
since $\alpha(\dE', \tilde{P}')$ is the unique lift of $\alpha(E', P')$
satisfying $F(\alpha(\dE', \tilde{P}')) = 0$ by Hensel's lemma.
Therefore, at the final step,
we obtain $\Phi_{\l}^\alpha(X,Y) \bmod{(X - \alpha(E,P))^{\l+2}}$
as in the Kunzweiler--Robert algorithm,
which is equal to $\Phi_{\l}^\alpha(X,Y) \bmod p$.

Based on the above algorithm to compute $\Phi_{\l}^\alpha \bmod p$,
we can compute $\Phi_{\l}^\alpha(X,Y) \in \Z[X,Y]$.
The following theorem states the complexity of our algorithm.
\begin{theorem}\label{thm:complexity}
    Let $\l$ be an odd prime number prime to $N$.
    Assume that $N \in O(1)$.
    Then,
    under \Cref{conj:height_bound},
    we can compute $\Phi_\ell^\alpha$ in
    $O(\l^3 \log^3 \l \log\log \l)$ bit operations.
\end{theorem}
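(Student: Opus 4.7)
The plan is to mirror the Kunzweiler--Robert complexity analysis summarized in Section~\ref{subsec:KRalgorithm}, factoring the total cost as (per-prime cost)$\times$(number of primes) plus CRT reconstruction, and to pinpoint where our algorithm departs from theirs.

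For the per-prime cost, I would walk through the algorithm of Section~\ref{subsec:algorithms_enhanced}. Steps (1)--(4) and (6)--(7) are direct translations of the corresponding Kunzweiler--Robert steps, so their analysis applies verbatim. The only new ingredient is step (5), where for each of the $\ell+1$ codomain curves $E'$ we lift $\alpha(E',P')$ to $\tilde{a}'\in R=\Fps[\epsilon]/(\epsilon^{\ell+2})$ using $\mathtt{Newton\_lift}$ applied to $F(X)=J_1(X)-j(\dE')J_0(X)$. Since $\deg J_1=[\SL:\Gamma]$ by Remark~\ref{rem:degree_J1} and $N\in O(1)$, we have $\deg J_0,\deg J_1\in O(1)$; hence each Newton iteration uses a constant number of multiplications in $R$, and $O(\log\ell)$ iterations suffice to reach full precision. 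This cost is dominated by that of $\mathtt{lift\_isogeny\_diamond}$, so the per-prime cost remains $O(\ell^2\log^3\ell\log\log\ell)$ bit operations for primes $p$ with $\log p\in O(\log\ell)$, exactly as in \cite[Theorems 4.4, 4.5]{deformation_Kunzweiler_2024}.

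For the number of primes, I would invoke Conjecture~\ref{conj:height_bound}: since $\deg J_1\in O(1)$, we get $h(\Phi_\ell^\alpha)\le B_\ell/\deg J_1\in O(\ell\log\ell)$ for all sufficiently large $\ell$. Hence CRT reconstruction requires only $O(\ell)$ primes of bit size $O(\log\ell)$. To avoid relying on Heuristic~\ref{heur:cardinality_n}, I would draw primes from $\Psl$ as furnished by the dimension-$8$ embedding of \cite[\S 3.5]{deformation_Kunzweiler_2024}; this set contains $\Omega(\ell)$ primes of the required size by Dirichlet's theorem on primes in arithmetic progressions.

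Multiplying, the dominant cost is $O(\ell)\cdot O(\ell^2\log^3\ell\log\log\ell)=O(\ell^3\log^3\ell\log\log\ell)$ bit operations, and the final CRT reconstruction of $O(\ell^2)$ coefficients of bit size $O(\ell\log\ell)$ from $O(\ell)$ residues fits within the same bound. The main obstacle I anticipate is certifying step (5): one must check that $\tfrac{dF}{dX}(\alpha(E',P'))$ is a unit modulo $\epsilon$ before running the Newton lift. This fails precisely when $X=\alpha(E',P')$ is a multiple root of $J_1(X)-j(E')J_0(X)$, which happens only for $j(E')\in\{0,1728\}$; discarding those isomorphism classes (and the primes where they arise) changes $\Psl$ by at most a constant factor and preserves the overall complexity. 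A secondary subtlety is that the rational function used to compute $\alpha(E',P')$ in the sense of Definition~\ref{def:computable} must be evaluated in $R$ at a cost absorbed by the above bounds, which is immediate since its degree depends only on $\alpha$.
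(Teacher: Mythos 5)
Your proposal is essentially correct and follows the same CRT-based analysis as the paper's proof: bound the per-prime cost by adapting the Kunzweiler--Robert complexity argument (with the only new step being $\mathtt{Newton\_lift}$ applied to $F(X)=J_1(X)-j(\dE')J_0(X)$, which is cheap because $\deg J_1\in O(1)$), then invoke \Cref{conj:height_bound} to argue $O(\ell)$ primes suffice, and multiply. You go directly to the dimension-$8$ embedding to avoid \Cref{heur:cardinality_n}; the paper instead runs the main proof with the set $\PlN$ and relegates the heuristic-free variant to a closing remark, so your route is, if anything, slightly cleaner and more faithful to the theorem's stated hypotheses.

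The one concrete gap is in your choice of prime set. Drawing primes from $\Psl=\{p>11\text{ prime}\mid 2^n\ell\mid(p+1)\}$ with $n=\lceil\log_2\ell\rceil$ is not sufficient: to compute $\Phi_\ell^\alpha\bmod p$ you must begin from an enhanced elliptic curve $(E,\ast)$ over $\Fps$, and for a supersingular $E$ with $E(\Fps)\cong(\Z/(p+1)\Z)^2$ the level structure of level $\Gamma$ is $\Fps$-rational only if $N\mid(p+1)$. The paper therefore strengthens the divisibility to $2^n\ell N\mid(p+1)$ (and, in the non-heuristic-free version, uses $\PlN$ which adds $N\mid(p+1)$ to the definition of $\Pl$). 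This is the reason the theorem assumes $N\in O(1)$: imposing the extra congruence thins the admissible primes only by a constant factor, so by the same Dirichlet/Lemma-4.3 style counting there remain $\Omega(\ell)$ of them with $\log p\in O(\log\ell)$, and the final complexity is unchanged. Your discussion of discarding primes with $j(E')\in\{0,1728\}$ is a separate (and correct) point, but it does not subsume the $N\mid(p+1)$ requirement, which should be built into the prime set from the outset.
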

\begin{proof}
    The proof is similar to that of \cite[Theorem 4.4]{deformation_Kunzweiler_2024}.
    Here, we only explain the differences.
    Since we require the level structure to be defined over $\Fps$,
    we use a set of primes $\PlN$ defined as
    \[
        \PlN:=\{p>11~ \text{prime}\mid \exists n,a,b~\text{with}~2^n-c_{\l}\l=a^2+4b^2~\text{and}~2^nc_{\l}\l|(p+1)~\text{and}~N\mid(p+1)\}
    \]
    for computing $\Phi_{\l}^\alpha \bmod p$.
    By a similar argument as in Lemma 4.3 in \cite{deformation_Kunzweiler_2024},
    we can show that
    $\PlN$ has $\Omega(\l)$ primes $p$ with $\log p \in O(\log \l)$.
    By \Cref{conj:height_bound},
    $O(\l)$ primes are sufficient to recover $\Phi_{\l}^\alpha$ by the CRT method.

    Since $\mathtt{Newton\_lift}$ is also used in the Kunzweiler--Robert algorithm
    for each isogeny $f$,
    the computation of the lift $\tilde{a}'$ in step (\ref{item:step_compute_lifted_invariant})
    does not increase the overall complexity.
    Therefore, the total complexity of our algorithm is
    $O(\l^3 \log^3 \l \log\log \l)$ bit operations, the same 
    as that of the Kunzweiler--Robert algorithm.
\end{proof}
As in the Kunzweiler--Robert algorithm,
we can eliminate \Cref{heur:cardinality_n}
by using the trick of a dimension 8 embedding
and use a set of primes defined as
$\{p>11~ \text{prime}\mid 2^n\l N|(p+1)\}$
where $n=\lceil\log_2(\l)\rceil$ instead of $\PlN$.

\section{Examples}\label{sec:examples}
We give two examples of invariants with good models:
the coefficient $A$ of Montgomery curves
and the coefficient $d$ of Hessian curves.
Furthermore, we also show that the modular polynomials for these invariants
have zero-coefficient terms under certain degree conditions.

\subsection{Montgomery curve}\label{subsec:montgomery}
A \emph{Montgomery curve} over $\Comp$ is an elliptic curve defined by
the equation
\begin{equation*}
    E_A^\mont : Y^2Z = X^3 + A X^2 Z + X Z^2 \text{ for } A \in \Comp,\ A^2 \neq 4
\end{equation*}
and a point $(0 : 1 : 0)$.
For this curve,
the set of rational points
\begin{equation*}
    C_A^\mont = \{(0:1:0), (0:0:1), (1 : \pm\sqrt{A + 2}: 1)\}
\end{equation*}
forms a cyclic subgroup of order $4$.
Let $S_0(4)$ be the set of isomorphism classes of
enhanced elliptic curves of level $\Gamma_0(4)$.
We define a map
\begin{equation*}
    \mont : S_0(4) \to \Comp \setminus \{\pm 2\};\
    \EEparamZero{A}{\mont} \mapsto A.
\end{equation*}
Then, we have the following proposition.
\begin{proposition}\label{prop:montgomery_well_defined}
    The map $\mont$ is well-defined
    and gives
    a computable invariant of level $\Gamma_0(4)$.
\end{proposition}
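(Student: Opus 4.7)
The plan is to proceed in three steps: (i) compute $j_\mont$ from the $j$-invariant of the Montgomery family and verify the axioms of an invariant of level $\Gamma_0(4)$; (ii) show $\mont$ is well-defined and injective by producing a canonical Montgomery representative for every class; (iii) verify computability by exhibiting a uniform rational formula for $A$.

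For step (i), a direct computation (passing to short Weierstrass form and applying the standard formula) yields
\[
j(E_A^\mont) = \frac{256(A^2 - 3)^3}{A^2 - 4}.
\]
Setting $j_\mont(X) := 256(X^2-3)^3/(X^2-4)$, the reduced form is $J_1/J_0$ with $J_1(X) = 256(X^2-3)^3$ and $J_0(X) = X^2 - 4$; these are coprime since $(\pm 2)^2 - 3 \neq 0$. The axioms are then immediate: $\deg J_1 = 6 > 2 = \deg J_0$; $J_0$ vanishes exactly on $\{\pm 2\} = \Comp \setminus \image\mont$; $J_0$ is primitive; and the leading coefficient $256 = 2^8$ of $J_1$ is a power of $2$, the only prime dividing $N = 4$.

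For step (ii), let $(E, C) \in S_0(4)$ with $C$ generated by a point $T$ of order $4$. Place $E$ in short Weierstrass form and translate $x$ so that $[2]T$ becomes $(0,0)$, obtaining a model $y^2 = x(x^2 + ax + b)$. Writing $T = (t, s)$, the duplication formula on this model forces $t^2 = b$. An admissible rescaling (sending $(t, s)$ to a point of $X$-coordinate $1$) then converts the model into Montgomery form $Y^2 = X^3 + (a/t)X^2 + X$, and a direct check confirms that the image of $C$ equals $C_A^\mont$ for $A := a/t$. The value $A$ depends only on the isomorphism class: the residual symmetry $(x,y) \mapsto (\lambda^2 x, \lambda^3 y)$ of the intermediate model fixes the ratio $a/t$, and replacing $T$ by its inverse (the other generator of $C$) leaves $t$ unchanged. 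For injectivity, any isomorphism $(E_A^\mont, C_A^\mont) \to (E_{A'}^\mont, C_{A'}^\mont)$ must fix the unique $2$-torsion point $(0,0)$ in the distinguished subgroup and carry the unique $X$-coordinate $1$ of its order-$4$ points to itself; such an isomorphism has the form $(x,y) \mapsto (\lambda^2 x, \lambda^3 y)$ with $\lambda^2 = 1$, forcing $A = A'$.

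For step (iii), write $E_j^\mathrm{jinv}: y^2 = x^3 + \alpha x + \beta$ with $\alpha = -27j/(j-1728)$, $\beta = -54j/(j-1728)$, and let $T = (x_T, y_T)$ be a generator of $C$. Applying the recipe of step (ii) directly gives
\[
\gamma := x([2]T) = \left(\frac{3x_T^2 + \alpha}{2y_T}\right)^2 - 2x_T, \qquad A = \frac{3\gamma}{x_T - \gamma},
\]
a rational function in $j, x_T, y_T$ that depends only on $y_T^2 = x_T^3 + \alpha x_T + \beta$, and hence is unchanged upon replacing $T$ by $-T$. Thus $A$ is represented by a single rational function in $j$ and the coordinates of a generator of $C$, yielding computability. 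The principal obstacle is step (ii): one must carefully track the residual ambiguities in the Weierstrass-to-Montgomery reduction to certify both existence and invariance of $A$; the other two steps amount to routine algebra.
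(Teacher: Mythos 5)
Your proposal is correct and follows essentially the same route as the paper: pass to a Weierstrass model $y^2 = x(x^2+ax+b)$ with the order-$2$ point of $C$ at the origin, observe $t^2 = b$ for a generator $T = (t,s)$, rescale to Montgomery form with $A = a/t$, and check the conditions on $J_1, J_0$. The only real divergence is that the paper quotes Proposition~1 of \cite{PKC:OnuMor22} for the statement $(E_A^\mont, C_A^\mont) \cong (E_{A'}^\mont, C_{A'}^\mont) \iff A = A'$, whereas you give a short direct argument via the residual symmetry $(x,y) \mapsto (u^2 x, u^3 y)$ and the constraint that the $X$-coordinate-$1$ order-$4$ points are preserved.
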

\begin{proof}
    First, we show that $\mont$ is well-defined.
    Let $(E^\mathrm{jinv}_j, C_j)$ be an enhanced elliptic curve of level $\Gamma_0(4)$
    with $j$-invariant $j$.
    By shifting the $x$-coordinate,
    $(E^\mathrm{jinv}_j, C_j)$ is isomorphic to
    an enhanced elliptic curve $(E, C)$
    where
    \begin{equation*}
        E : Y^2 Z = X^3 + a_2 X^2 Z + a_4 XZ^2,
    \end{equation*}
    and the point of order $2$ in $C$ is $(0 : 0 : 1)$.
    Let $(x_1 : y_1 : 1)$ be a generator of $C$.
    An easy calculation shows that $x_1^2 = a_4$.
    Therefore, the isomorphism
    \begin{equation*}
        E \to E_{a_2/x_1}^\mont;\ (X : Y : Z) \mapsto (X/x_1 : Y/\sqrt{x_1}^3 : Z)
    \end{equation*}
    sends $C$ to $C_{a_2/x_1}^\mont$.
    This means that $(E, C)$ is isomorphic to $(E_{a_2/x_1}^\mont, C_{a_2/x_1}^\mont)$.
    
    From Proposition 1 in \cite{PKC:OnuMor22},
    it follows that
    $(E_A^\mont, C_A^\mont)$ is isomorphic to $(E_{A'}^\mont, C_{A'}^\mont)$
    if and only if $A = A'$.
    This means that $\mont$ is well-defined.

    Since $a_2$ and $x_1$ can be represented
    as rational functions in $j$ and the coordinates of a generator of $C_j$,
    the invariant $\mont$ is computable.

    The $j$-invariant of $E_A^\mont$ is given by
    \begin{equation*}
        j(E_A^\mont) = \frac{2^8(A^2 - 3)^3}{A^2 - 4}.
    \end{equation*}
    It is easy to see that this satisfies the required conditions.
\end{proof}
Next, we show that the affine $x$-coordinate on Montgomery curves
gives a good model of $\mont$.
\begin{proposition}
    For $A \in \Comp \setminus \{\pm 2\}$,
    we define the function
    \begin{equation*}
        x_A : E_A^\mont(\Comp) \to \Comp \cup \{\infty\};\
        (X : Y : Z) \mapsto
        \begin{cases}
            X/Z & \text{if } Z \neq 0,\\
            \infty & \text{if } Z = 0.
        \end{cases}
    \end{equation*}
    Then, $\{(\EEparamZero{A}{\mont}, x_A)\}$
    is a good model of $\mont$.
\end{proposition}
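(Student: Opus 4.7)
My plan is to verify the three model conditions and then to exhibit scalar multiplication formulas, division polynomials, and isogeny formulas witnessing that the model is good.

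The model conditions are straightforward. For surjectivity of $x_A$, any $x_0 \in \Comp$ admits a preimage because $y^2 = x_0^3 + A x_0^2 + x_0$ has a solution in $\Comp$, and $(0:1:0) \mapsto \infty$. The fiber $(x_A)^{-1}(\infty)$ equals $\{0_{E_A^\mont}\} \subset E_A^\mont[4]$, which gives condition (\ref{item:model_infinity}). Finally, two affine points of $E_A^\mont$ share an $x$-coordinate iff their $y$-coordinates are opposite, so condition (\ref{item:model_equivalence}) holds with $R_\infty = 0_{E_A^\mont}$.

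For the scalar multiplication, I would use the classical Montgomery differential addition law to produce $M_m(X;Y) \in \Z(X;Y)$ with $x_A([m]R) = M_m(A; x_A(R))$. The division polynomial $\Psi_m(X;Y) \in \Z[X;Y]$ would be taken as the standard $m$-th Montgomery $x$-division polynomial, obtained from the Weierstrass division polynomials attached to $y^2 = x^3 + A x^2 + x$: it has integer coefficients, leading coefficient $\pm m$ in $Y$, and vanishes at $Y = x_A(R)$ precisely when $[m]R = 0_{E_A^\mont}$.

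The main step, which I expect to be the principal obstacle, is the isogeny formula. For odd $m$ and a cyclic $m$-isogeny $\varphi \colon E_A^\mont \to E_{A'}^\mont$ with kernel generated by $R$, set $s = (m-1)/2$ and $x_i = x_A([i]R)$ for $i = 1, \ldots, s$. By the explicit formulas of Moody--Shumow and Renes for odd-degree Montgomery isogenies, $A'$ is given by
\[
A' \;=\; A\pi^2 - 6\pi^2\,\sigma_1 + 6\pi \cdot e_{s-1}(x_1, \ldots, x_s),
\]
where $\pi = \prod_{i=1}^s x_i$ and $\sigma_1 = \sum_{i=1}^s x_i$. Because $\ker \varphi$ is disjoint from $E_A^\mont[2]$, each $x_i$ is nonzero and the right-hand side is an honest polynomial in $\Z[A, x_1, \ldots, x_s]$; combined with the symmetry $x_A([i]R) = x_A([m-i]R)$, this yields the required $V_m \in \Z[X;Y_1, \ldots, Y_{m-1}] \subset \Z[\tfrac{1}{m}][X;Y_1, \ldots, Y_{m-1}]$. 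The subtle point is to confirm that this formula indeed produces the Montgomery representative $(E_{A'}^\mont, C_{A'}^\mont)$ of the enhanced codomain, rather than merely some curve isomorphic to the target: this follows because $\gcd(m,4) = 1$ forces $\varphi$ to be an isomorphism on the $4$-torsion and hence to carry $C_A^\mont$ to a cyclic subgroup of order $4$, which by the uniqueness clause in \Cref{prop:montgomery_well_defined} coincides with $C_{A'}^\mont$.
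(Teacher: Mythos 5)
Your proposal follows the same route as the paper: verify the three model conditions directly, then exhibit scalar multiplication formulas, division polynomials, and isogeny formulas by pointing to the Montgomery-form literature. The paper's proof is just two citations (Proposition~21 in \cite{MOAT2022gen_montgomery} for scalar multiplication and division polynomials, and Theorem~1 in \cite{AC:CosHis17} for isogenies); you cite essentially equivalent sources and spell out the formulas in more detail, which is fine.

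One step in your argument is not quite right as phrased. You argue that the Costello--Hisil/Renes formula lands on the correct enhanced representative $(E_{A'}^{\mont}, C_{A'}^{\mont})$ because $\gcd(m,4)=1$ makes $\varphi$ an isomorphism on $4$-torsion, so $\varphi(C_A^{\mont})$ is cyclic of order $4$, ``which by the uniqueness clause in \Cref{prop:montgomery_well_defined} coincides with $C_{A'}^{\mont}$.'' But uniqueness in \Cref{prop:montgomery_well_defined} says that each \emph{isomorphism class of enhanced curve} has one Montgomery coefficient; it does not say that the curve $E_{A'}^{\mont}$ has only one cyclic order-$4$ subgroup, and in fact it does not (different choices of such a subgroup yield $\pm A'$, not necessarily $A'$). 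So knowing $\varphi(C_A^{\mont})$ is cyclic of order~$4$ is not enough. The gap is easy to close: the Costello--Hisil $x$-coordinate formula $x \mapsto x\prod_i \bigl((x x_i - 1)/(x - x_i)\bigr)^2$ fixes $x=0$, $x=1$, and $x=\infty$, hence it carries the three $x$-coordinates comprising $C_A^{\mont}$ to those comprising $C_{A'}^{\mont}$, i.e.\ $\varphi(C_A^{\mont}) = C_{A'}^{\mont}$ by direct inspection rather than by an abstract uniqueness argument. With that repair, your proof is complete and matches the paper's.
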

\begin{proof}
    It is easy to see that
    $\{(\EEparamZero{A}{\mont}, x_A)\}$
    is a model of $\mont$.

    Proposition 21 in \cite{MOAT2022gen_montgomery}
    gives scalar multiplication formulas
    and division polynomials for this model.
    In addition,
    Theorem 1 in \cite{AC:CosHis17} gives isogeny formulas for this model.
\end{proof}

From the theorems in Section \ref{subsec:main_theorem},
we obtain the following corollary.
\begin{corollary}
    For any positive integer $m$ prime to $2$,
    there exists the modular polynomial $\Phi_m^{\mont}(X, Y) \in \Z[X, Y]$
    of order $m$ for $\mont$.
    In addition, if $m > 1$,
    then $\Phi_m^{\mont}(X, Y)$ is symmetric.
\end{corollary}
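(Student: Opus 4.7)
The plan is to invoke the three main theorems of Section~\ref{subsec:main_theorem} with $\alpha = \mont$, $N = 4$, and $\Gamma = \Gamma_0(4)$. By Proposition~\ref{prop:montgomery_well_defined}, $\mont$ is a (computable) invariant of level $\Gamma_0(4)$, and the immediately preceding proposition establishes that $\{(E_A^\mont, x_A)\}_{A \in \Comp \setminus \{\pm 2\}}$ is a good model of $\mont$. Hence every hypothesis needed to apply the Section~\ref{sec:main_theorems} results is already in place.

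For the existence and integrality, I first note that any positive integer $m$ prime to $2$ is automatically prime to $N = 4$, so the coprimality hypothesis is satisfied. Theorem~\ref{thm:existence_of_modular_polynomial} then produces a modular polynomial $\Phi_m^{\mont}(X, Y) \in \Q[X, Y]$ of order $m$ for $\mont$, and Theorem~\ref{thm:integral_modular_polynomial}(\ref{enum:integer_coeff}) upgrades the coefficients to $\Z$, giving $\Phi_m^{\mont}(X, Y) \in \Z[X, Y]$.

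For the symmetry claim, observe that $\Gamma = \Gamma_0(N)$ is precisely one of the two sufficient conditions in Theorem~\ref{thm:symmetry_of_modular_polynomial}. Therefore, whenever $m > 1$ is prime to $2$, Theorem~\ref{thm:symmetry_of_modular_polynomial} applies directly and yields $\Phi_m^{\mont}(X, Y) = \Phi_m^{\mont}(Y, X)$.

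Since the corollary is a direct specialization of the general results, no genuine obstacle arises in its proof; all the technical work has already been carried out in verifying that $\mont$ together with $\{(E_A^\mont, x_A)\}$ fits the framework (i.e., that scalar multiplication formulas, division polynomials, and isogeny formulas with the required integrality properties exist for Montgomery curves, as cited from \cite{MOAT2022gen_montgomery} and \cite{AC:CosHis17}). The only bookkeeping is the trivial observation that, for $N = 4$, being coprime to $N$ is equivalent to being odd.
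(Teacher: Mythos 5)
Your proof is correct and is essentially the same as the paper's (the corollary is stated there with the one-line justification ``From the theorems in Section~\ref{subsec:main_theorem}, we obtain the following corollary''). You correctly identify $N=4$, $\Gamma=\Gamma_0(4)$, verify that $m$ odd implies $\gcd(m,4)=1$, and cite Theorems~\ref{thm:existence_of_modular_polynomial}, \ref{thm:integral_modular_polynomial}, and \ref{thm:symmetry_of_modular_polynomial} together with Proposition~\ref{prop:montgomery_well_defined} and the good-model proposition for Montgomery curves.
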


As stated in \S 7 in \cite{broker2012modular},
the coefficient of $X^iY^j$ in the modular polynomial of order $m$ for a modular function of level $N$
is zero if $i, j$, and $m$ satisfy some congruence conditions modulo $N$.
We can show similar properties for our modular polynomials.
\begin{proposition}\label{prop:coeff_zero_montgomery}
    Let $m > 1$ be an odd integer.
    The coefficient of $X^i Y^j$ in $\Phi_m^{\mont}(X, Y)$
    is zero unless $i + j \equiv 0 \pmod{2}$.
\end{proposition}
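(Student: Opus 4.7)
My plan is to prove the stronger symmetry $\Phi_m^{\mont}(X, Y) = \Phi_m^{\mont}(-X, -Y)$, from which the vanishing of coefficients $c_{i,j}$ with $i + j$ odd is immediate. By uniqueness of the modular polynomial (\Cref{def:modular_polynomial_higher_level}), this reduces to checking that $\tilde\Phi(X, Y) := \Phi_m^{\mont}(-X, -Y)$ satisfies the two defining properties. Monicity of degree $\psi(m)$ in $Y$ is preserved up to the sign $(-1)^{\psi(m)}$, which equals $+1$ since $\psi(m) = \prod_{q \mid m} q^{v_q(m)-1}(q+1)$ is even for odd $m > 1$ (any odd prime divisor $q$ of $m$ contributes an even factor $q + 1$). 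The zero-set condition translates to: a cyclic $m$-isogeny $(E_A^{\mont}, C_A^{\mont}) \to (E_{A'}^{\mont}, C_{A'}^{\mont})$ exists if and only if one $(E_{-A}^{\mont}, C_{-A}^{\mont}) \to (E_{-A'}^{\mont}, C_{-A'}^{\mont})$ exists.

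To establish this equivalence I would use the isomorphism $\iota_A : E_A^{\mont} \to E_{-A}^{\mont}$, $(X : Y : Z) \mapsto (-X : iY : Z)$, which is readily verified to be a well-defined isomorphism of elliptic curves fixing the 2-torsion point $(0 : 0 : 1)$. The key observation is that $E_A^{\mont}$ has exactly two cyclic subgroups of order $4$ containing $(0:0:1)$: the prescribed $C_A^{\mont}$ (generated by points with $x$-coordinate $1$) and an alternate subgroup $C_A^{(2)}$ (generated by points with $x$-coordinate $-1$, which I would identify by solving $2P = (0,0)$). Since $\iota_A$ fixes $(0:0:1)$, it maps the pair $\{C_A^{\mont}, C_A^{(2)}\}$ to $\{C_{-A}^{\mont}, C_{-A}^{(2)}\}$. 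Computing $\iota_A(1 : \sqrt{A+2} : 1) = (-1 : i\sqrt{A+2} : 1)$, whose $x$-coordinate is $-1$, and noting that the non-identity points of $C_{-A}^{\mont}$ have $x$-coordinates only in $\{0, 1\}$, I conclude $\iota_A(C_A^{\mont}) = C_{-A}^{(2)}$, and by elimination $\iota_A(C_A^{(2)}) = C_{-A}^{\mont}$.

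Given a cyclic $m$-isogeny $\varphi : (E_A^{\mont}, C_A^{\mont}) \to (E_{A'}^{\mont}, C_{A'}^{\mont})$, I would next show $\varphi$ also maps $C_A^{(2)}$ to $C_{A'}^{(2)}$: since $\gcd(m, 4) = 1$, the restriction $\varphi|_{E_A^{\mont}[4]}$ is injective, forcing $\varphi(0,0) = (0,0)$ (the 2-torsion of $C_{A'}^{\mont}$), so that $\varphi(C_A^{(2)})$ is a cyclic 4-subgroup through $(0,0)$ that cannot equal $C_{A'}^{\mont} = \varphi(C_A^{\mont})$ (otherwise $\varphi$ would collapse the 8-element subgroup $C_A^{\mont} + C_A^{(2)}$ into the 4-element $C_{A'}^{\mont}$, contradicting injectivity on $E_A^{\mont}[4]$). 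Hence $\varphi(C_A^{(2)}) = C_{A'}^{(2)}$, and the conjugate $\iota_{A'} \circ \varphi \circ \iota_A^{-1}$ is a cyclic $m$-isogeny $E_{-A}^{\mont} \to E_{-A'}^{\mont}$ sending $C_{-A}^{\mont} = \iota_A(C_A^{(2)})$ to $\iota_{A'}(C_{A'}^{(2)}) = C_{-A'}^{\mont}$, as required. The reverse implication is obtained by swapping $(A, A')$ with $(-A, -A')$. The main obstacle is the \emph{swap} phenomenon: $\iota_A$ does not preserve the distinguished level structure $C_A^{\mont}$ itself, but rather exchanges the unordered pair of cyclic 4-subgroups through $(0:0:1)$, and one must verify that every cyclic $m$-isogeny with $m$ coprime to $4$ respects this pairing consistently on both sides.
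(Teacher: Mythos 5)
Your proof is correct and follows the same overall strategy as the paper: construct the isomorphism $\iota_A : E_A^{\mont} \to E_{-A}^{\mont}$, conjugate a given cyclic $m$-isogeny, deduce $\Phi_m^{\mont}(-A, -A') = 0$ whenever $\Phi_m^{\mont}(A, A') = 0$, and conclude $\Phi_m^{\mont}(-X,-Y) = \Phi_m^{\mont}(X,Y)$ by uniqueness. The difference is that the paper simply asserts ``it is easy to see that $\iota_{A'} \circ \varphi \circ \iota_A^{-1}$ is a cyclic isogeny of degree $m$ between $(E_{-A}^{\mont}, C_{-A}^{\mont})$ and $(E_{-A'}^{\mont}, C_{-A'}^{\mont})$'', while you have correctly identified and resolved the hidden subtlety there: $\iota_A$ does \emph{not} carry the distinguished subgroup $C_A^{\mont}$ (generated at $x=1$) to $C_{-A}^{\mont}$; rather it sends $C_A^{\mont}$ to the alternate subgroup $C_{-A}^{(2)}$ (generated at $x=-1$), and one must additionally verify that $\varphi$ swaps the two cyclic $4$-subgroups through $(0,0)$ consistently. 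Your argument for this (injectivity of $\varphi$ on the $4$-torsion forces $\varphi(0,0)=(0,0)$, and the two cyclic $4$-subgroups through $(0,0)$ map bijectively to the corresponding pair on the target) is exactly the right filler. Your explicit check that $(-1)^{\psi(m)} = 1$ for odd $m>1$, needed to preserve monicity under the sign flip, is also a detail the paper elides but that the uniqueness argument relies on. In short: same route, but you have supplied the verification the paper waved away.
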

\begin{proof}
    Let $A$ be in $\image\mont$,
    and $A'$ be a root of $\Phi_m^{\mont}(A, Y)$.
    Then, there exists a cyclic isogeny $\varphi$ of degree $m$
    between the enhanced elliptic curves
    $(E_A^\mont, C_A^\mont)$ and $(E_{A'}^\mont, C_{A'}^\mont)$.
    
    Let $\iota_A$ be the isomorphism $E_A^\mont \to E_{-A}^\mont$
    defined by $(X : Y : Z) \mapsto (-X : \sqrt{-1} Y : Z)$,
    and $\iota_{A'}$ be the similar isomorphism $E_{A'}^\mont \to E_{-A'}^\mont$.
    Then, it is easy to see that
    the composition $\iota_{A'} \circ \varphi \circ \iota_A^{-1}$
    is a cyclic isogeny of degree $m$
    between $(E_{-A}^\mont, C_{-A}^\mont)$ and $(E_{-A'}^\mont, C_{-A'}^\mont)$.
    Therefore, we have $\Phi_m^{\mont}(-A, -A') = 0$.
    This means that the polynomial $\Phi_m^{\mont}(-X, -Y)$
    satisfies the conditions for the modular polynomial of order $m$ for $\mont$.
    By the uniqueness of the modular polynomial,
    we have $\Phi_m^{\mont}(-X, -Y) = \Phi_m^{\mont}(X, Y)$.
    This implies the desired result.
\end{proof}

\subsection{Hessian curve}\label{subsec:hessian}
A \emph{Hessian curve} over $\Comp$ is an elliptic curve defined by
the equation
\begin{equation*}
    E_d^\hess : X^3 + Y^3 + Z^3 = d XYZ \text{ for } d \in \Comp,\ d^3 \neq 27
\end{equation*}
and a point $(1 : -1 : 0)$.
The arithmetic on Hessian curves
can be found in \cite[\S 2.1]{PKC:FarJoy10}.
In particular,
a point $(x : y : z) \in E_d^\hess(\Comp)$ is in the $3$-torsion subgroup
if and only if $xyz = 0$.
Let $\omega = e^{2\pi i / 3}$.
Then,
we define
$P_d^\hess = (-\omega : 1 : 0)$
and
$Q_d^\hess = (0 : -1 : 1)$.
As we show in the following lemma,
the points $P_d^\hess$ and $Q_d^\hess$
form a basis of the $3$-torsion subgroup of $E_d^\hess$,
and the Weil pairing satisfies $e_3(P_d^\hess, Q_d^\hess) = \omega$.

Let $S(3)$ be the set of isomorphism classes of
enhanced elliptic curves of level $\Gamma(3)$.
We define a map
\begin{equation*}
    \hess : S(3) \to \Comp;\
    \EEparam{d}{\hess} \mapsto d.
\end{equation*}
In the following,
we show that this map is well-defined
and gives an invariant of level $\Gamma(3)$.

We first note that,
we can take a representative $(E, (P, Q))$
of any element in $S(3)$
such that
$E$ is given by
\begin{equation}\label{eq:S3_weier}
    E : Y^2 Z + a_1 X Y Z + a_3 Y Z^2 = X^3
\end{equation}
for some $a_1, a_3 \in \Comp$,
and that $P = (0 : 0 : 1)$
(see \cite[Chap. 4 \S 2]{Husemoller2004elliptic}).
In addition,
for $j \in \Comp$ and the level structure $(P_j, Q_j)$ on $E_j^\mathrm{jinv}$,
the coefficients $a_1$ and $a_3$
satisfying $(E_j^\mathrm{jinv}, (P_j, Q_j)) \cong (E, (P, Q))$
can be represented as rational functions in $j$ and the coordinates of $P_j$.

The following lemma gives an isomorphism
from such an elliptic curve $E$ to a Hessian curve.
\begin{lemma}\label{lem:isom_E_to_Hess}
    Let $E$ be the elliptic curve defined by \eqref{eq:S3_weier},
    $P = (0 : 0 : 1)$,
    and $Q = (\xi:\eta:1)$ be an order-$3$ point on $E$
    such that the Weil pairing $e_3(P, Q) = \omega$.
    Then the map $\mathbb{P}^2_\Comp \to \mathbb{P}^2_\Comp$ defined by
    \begin{equation*}
        (X:Y:Z) \mapsto
            \begin{pmatrix}
                \omega a_1 & (2\omega + 1) & (\omega - 1)a_3 \\
                -(\omega + 1)a_1 & -(2\omega + 1) & (-\omega - 2)a_3 \\
                a_1 + 3\frac{a_3}{\xi} & 0 & 0
            \end{pmatrix}
            \begin{pmatrix}
                X \\ Y \\ Z
            \end{pmatrix}
    \end{equation*}
    induces an isomorphism from $E$ to $E_d^\mathrm{Hess}$ for
    $d = \frac{3a_1\xi}{a_1\xi + 3a_3}$.
    In addition, this isomorphism maps $P$ to $(-\omega:1:0)$
    and $Q$ to $(0:-1:1)$.
\end{lemma}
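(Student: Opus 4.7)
The plan is to verify the assertions by explicit computation with the $3 \times 3$ matrix $M$ appearing in the statement.

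\textbf{Invertibility.} Expanding $\det M$ along the third row gives $\det M = -3 a_3 (2\omega + 1)(a_1 + 3 a_3/\xi)$. Smoothness of $E$ forces $a_3 \neq 0$ and $a_1^3 - 27 a_3 \neq 0$ (the discriminant of the Weierstrass form \eqref{eq:S3_weier} is $a_3^3 (a_1^3 - 27 a_3)$), and linear independence of $P$ and $Q$, which follows from $e_3(P, Q) = \omega \neq 1$, forces $\xi \neq 0$. A direct substitution into the $3$-division polynomial of $E$ shows that $a_1 \xi + 3 a_3 = 0$ is possible only if $a_1^3 = 27 a_3$, which smoothness rules out; together with $2\omega + 1 = \sqrt{-3} \neq 0$, this yields $\det M \neq 0$.

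\textbf{Images of the basis points.} The image of $P = (0:0:1)$ under $M$ is the third column $((\omega - 1) a_3 : (-\omega - 2) a_3 : 0)$; the identity $-\omega \cdot (-\omega - 2) = \omega^2 + 2\omega = \omega - 1$ (via $\omega^2 + \omega + 1 = 0$) shows this equals $(-\omega : 1 : 0)$. For $Q = (\xi : \eta : 1)$, the third coordinate of $M Q$ is $a_1 \xi + 3 a_3$, which is nonzero by the previous step. Requiring the first coordinate of $M Q$ to vanish determines $\eta = -(\omega a_1 \xi + (\omega - 1) a_3)/(2\omega + 1)$; the other root of the Weierstrass equation in $Y$ at $X = \xi$ corresponds to $-Q$, whose Weil pairing with $P$ is $\omega^{-1} = \omega^2$, so the hypothesis $e_3(P, Q) = \omega$ selects precisely this $\eta$. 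Using $-(\omega + 1) + \omega = -1$ and $(\omega - 1) + (-\omega - 2) = -3$, the second coordinate of $MQ$ simplifies to $-(a_1 \xi + 3 a_3)$, so $M Q = (0 : -1 : 1)$.

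\textbf{The equation.} The remaining task is to verify that $M$ pulls the Hessian polynomial back to a scalar multiple of the defining polynomial of $E$ when $d = 3 a_1 \xi / (a_1 \xi + 3 a_3)$. Writing $(X', Y', Z')^\top = M (X, Y, Z)^\top$ and substituting into $X'^3 + Y'^3 + Z'^3 - d X' Y' Z'$ produces a homogeneous cubic in $X, Y, Z$; the claim is that, for the stated value of $d$, this cubic is a nonzero scalar multiple of $X^3 - Y^2 Z - a_1 X Y Z - a_3 Y Z^2$. This polynomial identity is the main technical obstacle. I will expand the cubic symbolically, collect coefficients of the ten monomials in $X, Y, Z$, and apply $1 + \omega + \omega^2 = 0$ repeatedly to cancel the coefficients of all monomials other than $X^3$, $X Y Z$, $Y^2 Z$, and $Y Z^2$. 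The coefficient of $X^3$ fixes the overall scalar, and comparing it against the coefficient of $X Y Z$ produces the claimed value of $d$, completing the proof.
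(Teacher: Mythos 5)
Your sketch diverges from the paper's proof in both approach and completeness. The paper does not expand the cubic by brute force; it invokes Theorem~1 of \cite{PKC:FarJoy10}, which already provides an isomorphism from the Weierstrass model \eqref{eq:S3_weier} to a \emph{generalized} Hessian curve $H_{c',d'}\colon X^3+Y^3+c'Z^3=d'XYZ$ with $c'=a_1^3-27a_3$, $d'=3a_1$, realized by the very same matrix with third row $(1,0,0)$. Composing with $(X:Y:Z)\mapsto(X:Y:\gamma Z)$ for a cube root $\gamma$ of $c'$ lands on $E_{d'/\gamma}^{\mathrm{Hess}}$, so the only new computation is that $a_1+3a_3/\xi$ is a cube root of $a_1^3-27a_3$, which follows from the relation $3\xi^3+a_1^2\xi^2+3a_1a_3\xi+3a_3^2=0$ supplied by the $3$-division polynomial. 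Your plan to expand the Hessian cubic under $M$ and cancel all ten coefficients could in principle succeed and would be self-contained, but as written it is only a program: the cancellations are nontrivial and you do not exhibit them, nor do you show where the specific value of $d$ emerges from the $XYZ$-coefficient.

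More seriously, the step identifying the image of $Q$ is circular. You correctly observe that the first coordinate of $MQ$ vanishes for exactly one value $\eta_0$ of $\eta$, and that the two order-$3$ points with $X$-coordinate $\xi$ have Weil pairings $\omega$ and $\omega^2$ with $P$. But you then assert that $e_3(P,Q)=\omega$ ``selects precisely this $\eta$'', i.e.\ that $\eta_0$ is the root giving pairing $\omega$ rather than $\omega^2$, without any computation establishing which is which. The paper resolves this by an explicit Weil-pairing calculation using Miller functions: for an order-$3$ point $R$ on $E$ the Miller function $f_{3,R}$ is the tangent line at $R$ normalized to have $y$-coefficient $1$, and $e_3(P,Q)=-f_{3,P}(Q)/f_{3,Q}(P)$; a short computation then shows $e_3(P,Q)=\omega$ is \emph{equivalent} to $\omega a_1\xi+(2\omega+1)\eta+(\omega-1)a_3=0$, which is exactly the vanishing of the first coordinate. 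Without such a computation (or an alternative --- e.g.\ first proving $M$ is an isomorphism, then using that isomorphisms preserve $e_3$ together with the explicit group law on the Hessian $3$-torsion to pin down $MQ$), this part of your argument does not close.
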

\begin{proof}
    We first note that $\xi \neq 0$
    since the only points with $X$-coordinate $0$ on $E$
    are $P$, $(0:-a_3:1) = -P$, and the identity point $(0:1:0)$.

    For $c', d' \in \Comp$ with $c' \neq 0, d'^3 \neq 27c'$,
    we define a \emph{generalized Hessian curve} $H_{c', d'}$ by
    \begin{equation*}
        H_{c', d'} : X^3 + Y^3 + c'Z^3 = d' XYZ \text{ and } (1 : -1 : 0).
    \end{equation*}
    Let $\gamma$ be a cube root of $c'$.
    Then there exists an isomorphism from $H_{c', d'}$ to $E_{d'/\gamma}^\mathrm{Hess}$
    defined by $(X:Y:Z) \mapsto (X:Y:\gamma Z)$.

    From the proof of Theorem 1 in \cite{PKC:FarJoy10},
    there exists an isomorphism from $E$ to $H_{c', d'}$
    for $c' = a_1^3 - 27 a_3$ and $d' = 3a_1$
    defined by
    \begin{equation}\label{eq:isom_E_to_Hess}
        (X:Y:Z) \mapsto
            \begin{pmatrix}
                \omega a_1 & (2\omega + 1) & (\omega - 1)a_3 \\
                -(\omega + 1)a_1 & -(2\omega + 1) & (-\omega - 2)a_3 \\
                1 & 0 & 0
            \end{pmatrix}
            \begin{pmatrix}
                X \\ Y \\ Z
            \end{pmatrix}.
    \end{equation}
    Therefore, it suffices to show that $a_1 + 3a_3/\xi$ is a cube root of $a_1^3 - 27 a_3$
    for proving the first part of this lemma.

    The $3$rd division polynomial of $E$ is given by
    \begin{equation*}
        3x^4 + a_1^2 x^3 + 3a_1 a_3 x^2 + 3a_3^2x.
    \end{equation*}
    Since $\xi \neq 0$,
    we have $3\xi^3 + a_1^2 \xi^2 + 3a_1 a_3 \xi + 3a_3^2 = 0$.
    From this, it is easy to verify that
    $(a_1 + 3a_3/\xi)^3 = a_1^3 - 27 a_3$.

    It is straightforward to check that
    the above isomorphism maps $P$ to $(-\omega:1:0)$.
    To see that it maps $Q$ to $(0:-1:1)$,
    we calculate the Weil pairing $e_3(P, Q)$.
    For a point $R$ on $E$ of order $3$,
    the Miller function $f_{3, R}(x, y)$
    is the tangent line at $R$ on the curve $E$ whose coefficient of $y$ is $1$
    (see \cite[\S 4.1]{JC:Miller04} for the definition of Miller function).
    From Proposition 8 in \cite{JC:Miller04},
    we have
    $e_3(P, Q) = -\frac{f_{3, P}(Q)}{f_{3, Q}(P)}$.
    An easy calculation shows that $e_3(P, Q) = \omega$ is equivalent to
    \begin{equation*}
        \omega a_1 \xi + (2\omega + 1)\eta + (\omega - 1)a_3 = 0.
    \end{equation*}
    This implies that
    the image of $Q$ under the above isomorphism is $(0:-1:1)$.
\end{proof}

Next, we show that the coefficient $d$ of a Hessian curve
is unchanged under isomorphisms of enhanced elliptic curves of level $\Gamma(3)$.
\begin{lemma}\label{lem:isom_Hess_to_Hess_w}
    Let $d, d' \in \Comp$ with $d^3 \neq 27$ and $d'^3 \neq 27$,
    and $\iota : E_d^\mathrm{Hess} \to E_{d'}^\mathrm{Hess}$ be an isomorphism
    mapping $(-\omega:1:0)$ to $(-\omega:1:0)$.
    Then,
    $d' = \omega^i d$ and $\iota$ is given by
    $(X:Y:Z) \mapsto (X:Y: \omega^{-i} Z)$
    for some $i \in \{0, 1, 2\}$.
\end{lemma}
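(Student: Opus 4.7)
The plan is to show $\iota$ is induced by a projective linear transformation of $\mathbb{P}^2$, and then exploit the rigid configuration of inflection points common to every Hessian cubic. For the linearity, note that both $E_d^\hess$ and $E_{d'}^\hess$ are smooth plane cubics embedded in $\mathbb{P}^2$ via their hyperplane class, and since the origin $(1:-1:0)$ is an inflection point of each (as can be checked from the tangent line $X + Y + (d/3)Z = 0$), this class coincides with $3O$ on each curve. Since $\iota$ is an isomorphism of elliptic curves preserving the origin, pullback identifies the complete linear systems $|3O|$ on the two sides, so $\iota$ is induced by some $A \in \mathrm{PGL}_3(\Comp)$.

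Next I would use the fact that the nine inflection points of $E_d^\hess$, which coincide with $E_d^\hess[3]$, are the common base points of the Hessian pencil, namely
\[
    \{(1:-\omega^k:0),\ (0:1:-\omega^k),\ (-\omega^k:0:1) : k = 0, 1, 2\},
\]
independent of $d$. Because $\iota$ preserves the $3$-torsion subgroup, $A$ permutes these nine points. By hypothesis $A$ fixes $(1:-1:0)$ and $(1:-\omega^2:0) = (-\omega:1:0)$, both of which lie on the line $Z = 0$, so $A$ preserves that line. The remaining inflection point $(1:-\omega:0)$ on $Z = 0$ must then also be fixed, whence $A|_{Z=0}$ fixes three distinct points of $\mathbb{P}^1$ and therefore equals the identity. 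After rescaling $A$, I can write
\[
    A = \begin{pmatrix} 1 & 0 & a \\ 0 & 1 & b \\ 0 & 0 & c \end{pmatrix}
\]
with $c \neq 0$.

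Finally, substituting $(X:Y:Z) \mapsto (X+aZ:Y+bZ:cZ)$ into the defining equation of $E_{d'}^\hess$ and requiring the resulting polynomial in $X, Y, Z$ to be proportional to $X^3 + Y^3 + Z^3 - dXYZ$, I compare coefficients monomial by monomial: vanishing of $X^2Z$ and $Y^2Z$ forces $a = b = 0$; matching $Z^3$ then yields $c^3 = 1$; and matching $XYZ$ gives $d'c = d$. Writing $c = \omega^{-i}$ for some $i \in \{0,1,2\}$ concludes $d' = \omega^i d$ and $\iota(X:Y:Z) = (X:Y:\omega^{-i}Z)$, as claimed. The main obstacle is the linearity step; once it is in hand, the reduction to three fixed inflection points on $Z = 0$ and the subsequent coefficient comparison are both elementary.
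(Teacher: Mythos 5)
Your proof is correct, and it takes a genuinely different route from the paper's. The paper conjugates $\iota$ by the explicit isomorphism $\iota_d$ of Lemma~\ref{lem:isom_E_to_Hess} to reduce to a map between the Weierstrass models $E_{a_1,a_3}: Y^2Z + a_1XYZ + a_3YZ^2 = X^3$ that fixes $(0:0:1)$, and then invokes the standard classification of Weierstrass isomorphisms (Silverman III.3.1) to get $a_1^3/a_3 = a_1'^3/a_3'$ and hence $d'^3 = d^3$. You instead work directly in $\mathbb{P}^2$: the hyperplane class of a Hessian cubic equals $3O$ since $(1:-1:0)$ is a flex, so an origin-preserving isomorphism of elliptic curves is realized by some $A\in\mathrm{PGL}_3(\Comp)$, and the constraints that $A$ fix $(1:-1:0)$ and $(-\omega:1:0)$ force $A$ to stabilize the line $Z=0$ and hence also fix the third collinear point $(1:-\omega:0)$, pinning $A$ down to the shape you write; comparing cubic-form coefficients then gives $a=b=0$, $c^3=1$, $d'c = d$. (Incidentally, you do not really need the full Hesse configuration of nine common flexes: it is enough to observe that $E_d^\hess\cap\{Z=0\} = \{(1:-\omega^k:0)\}_{k=0,1,2}$ is $d$-independent, since $A$ preserves both the curve and the line $Z=0$ and therefore permutes this three-element intersection.) Your approach is more geometric and self-contained; the paper's is shorter once the apparatus of Lemma~\ref{lem:isom_E_to_Hess} is already on the table.
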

\begin{proof}
    Let $a_1 = d/3$, $a_3 = (d^3 - 3^3)/3^6$, and $E_{a_1, a_3}$
    be the elliptic curve defined by
    \begin{equation*}
        E_{a_1, a_3} : Y^2 Z + a_1 XYZ + a_3 Y Z^2 = X^3.
    \end{equation*}
    Then, the isomorphism defined in \eqref{eq:isom_E_to_Hess}
    gives an isomorphism from $E_{a_1, a_3}$ to $E_d^\mathrm{Hess}$
    mapping $(0:0:1)$ to $(-\omega:1:0)$.
    We denote this isomorphism by $\iota_d$.
    Similarly, we define $a_1' = d'/3$, $a_3' = (d'^3 - 3^3)/3^6$,
    and the isomorphism $\iota_{d'} : E_{a_1', a_3'} \to E_{d'}^\mathrm{Hess}$.

    Then, the isomorphism
    $\iota_{d'}^{-1} \circ \iota \circ \iota_d : E_{a_1, a_3} \to E_{a_1', a_3'}$
    maps $(0:0:1)$ to $(0:0:1)$.
    From the well-known fact about isomorphisms of elliptic curves in Weierstrass form
    (see Proposition III.3.1 in \cite{silverman2009arithmetic}),
    there exists $u \in \Comp^\times$ such that
    $a_1' = u a_1$, $a_3' = u^3 a_3$,
    and the isomorphism is given by $(X:Y:Z) \mapsto (u^2 X : u^3 Y : Z)$.
    In particular, we have $a_1^3/a_3 = a_1'^3/a_3'$,
    thus $d'^3 = d^3$.
    Therefore, we have $d' = \omega^i d$ for some $i \in \{0, 1, 2\}$.
    From this, we obtain $u = \omega^i$.
    This means that
    $\iota_{d'}^{-1} \circ \iota \circ \iota_d$ is given by
    \begin{equation*}
        (X:Y:Z) \mapsto (\omega^{2i} X : Y : Z).
    \end{equation*}
    An easy calculation using the definitions of $\iota_d$ and $\iota_{d'}$
    shows that
    $\iota$ is given by the desired form.
\end{proof}

Using these lemmas,
we obtain the following proposition.
\begin{proposition}
    The map $\hess$ defined above
    is well-defined
    and gives a computable invariant of level $\Gamma(3)$.
\end{proposition}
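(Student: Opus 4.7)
My plan is to verify the four properties that together make $\hess$ a computable invariant of level $\Gamma(3)$: (a) every class in $S(3)$ is represented by some $(E_d^\hess,(P_d^\hess,Q_d^\hess))$; (b) the value $d$ is uniquely determined by the isomorphism class, so that $\hess$ is well-defined and injective; (c) there is a rational function $j_\hess$ satisfying the four conditions in the definition of an invariant; and (d) $\hess$ is computable in the sense of Definition~\ref{def:computable}.

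Parts (a) and (b) will follow almost directly from the preceding two lemmas. For (a), given a representative $(E,(P,Q))$, I first bring $E$ into the form \eqref{eq:S3_weier} with $P=(0:0:1)$ as noted in the paragraph before the proposition, write $Q=(\xi:\eta:1)$, and apply Lemma~\ref{lem:isom_E_to_Hess}, using $e_3(P,Q)=e^{2\pi i/3}=\omega$ to match its hypothesis. This produces an isomorphism onto $(E_d^\hess,(P_d^\hess,Q_d^\hess))$ with $d=3a_1\xi/(a_1\xi+3a_3)$. The denominator is nonzero because the cube-root identity $(a_1+3a_3/\xi)^3=a_1^3-27a_3$ established inside the proof of Lemma~\ref{lem:isom_E_to_Hess}, combined with the non-singularity of $E$, forces $a_1^3-27a_3\neq 0$; the same identity gives $d^3\neq 27$ after a short manipulation. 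For (b), if $(E_d^\hess,(P_d^\hess,Q_d^\hess))\cong(E_{d'}^\hess,(P_{d'}^\hess,Q_{d'}^\hess))$, Lemma~\ref{lem:isom_Hess_to_Hess_w} forces the isomorphism to have the form $(X:Y:Z)\mapsto(X:Y:\omega^{-i}Z)$ with $d'=\omega^i d$; imposing that $(0:-1:1)\mapsto(0:-1:1)$ projectively yields $\omega^{-i}=1$, hence $i=0$ and $d=d'$.

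For (c), a direct calculation of $j(E_d^\hess)$ yields $J_1(X)/J_0(X)$ with $J_0(X)=(X^3-27)^3$ and $J_1(X)$ a monic polynomial of degree $12$; then $\deg J_1>\deg J_0$, the zeros of $J_0$ are exactly the three cube roots of $27$, which coincide with $\Comp\setminus\image\hess$, $J_0$ is primitive, and the leading coefficient of $J_1$ is $1$, so the final bullet is trivial. For (d), the transformation bringing $(E,(P,Q))$ into the form \eqref{eq:S3_weier} with $P=(0:0:1)$ together with the formula $d=3a_1\xi/(a_1\xi+3a_3)$ are explicit rational functions in $j$ and in the coordinates of $P_j,Q_j$, and do not depend on the specific representative chosen. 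The main obstacle is purely the bookkeeping around the cube-root identity needed to verify non-vanishing of $a_1\xi+3a_3$ and of $d^3-27$, together with the explicit but somewhat tedious calculation of $j(E_d^\hess)$; once these are in hand, the proposition reduces to a direct application of Lemmas~\ref{lem:isom_E_to_Hess} and~\ref{lem:isom_Hess_to_Hess_w}.
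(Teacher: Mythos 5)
Your proposal is correct and follows the same route as the paper's own (very terse) proof: existence and well-definedness of the Hessian representative come from Lemma~\ref{lem:isom_E_to_Hess} and Lemma~\ref{lem:isom_Hess_to_Hess_w}, computability comes from the explicit formula $d=3a_1\xi/(a_1\xi+3a_3)$, and the conditions on $j_\alpha$ are read off from $j(E_d^{\mathrm{Hess}})=d^3(d^3+216)^3/(d^3-27)^3$. You have simply spelled out the verifications (non-vanishing of $a_1\xi+3a_3$ and $d^3-27$ via the cube-root identity and the discriminant $a_3^3(a_1^3-27a_3)\neq 0$, the projective computation forcing $i=0$ in Lemma~\ref{lem:isom_Hess_to_Hess_w}, and the degrees of $J_0,J_1$) that the paper labels ``easy to see.''
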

\begin{proof}
    The well-definedness of $\hess$ follows from
    Lemmas \ref{lem:isom_E_to_Hess} and \ref{lem:isom_Hess_to_Hess_w}.
    In addition, $\hess$ is computable
    from the expressions of $d$ in Lemma \ref{lem:isom_E_to_Hess}.

    The $j$-invariant of $E_d^\hess$ is given by
    \begin{equation*}
        j(E_d^\hess) = \frac{d^3(d^3 + 6^3)^3}{(d^3 - 3^3)^3}
    \end{equation*}
    (see \cite[\S 2]{PKC:FarJoy10}).
    It is easy to see that this satisfies the required conditions.
\end{proof}
To see the invariant $\hess$ has a good model,
we use the $t$-coordinate arithmetic on Hessian curves,
which is introduced in \cite{FouazouLontouo2023}.
In particular,
we define the $t$-coordinate as follows:
\begin{equation*}
    t_d : E_d^\hess(\Comp) \to \Comp \cup \{\infty\};\
    (X : Y : Z) \mapsto
    \begin{cases}
        XY/Z^2 & \text{if } Z \neq 0,\\
        \infty & \text{if } Z = 0.
    \end{cases}
\end{equation*}
In the following,
we show that $\{(\EEparam{d}{\hess}, t_d)\}$
is a good model of $\hess$.
To do this,
we give scalar multiplication formulas,
division polynomials, and isogeny formulas
for the $t$-coordinate on Hessian curves.

We use the following theorem on scalar multiplication formulas
for the $t$-coordinate on Hessian curves,
which is given in \cite{FouazouLontouo2023}.
\begin{theorem}\label{thm:division_polynomials_hess}
    We define $h(t) = -4t^3 + d^2 t^2 - 2 d t + 1$,
    $P_1(t) = P_2(t) = 1$,
    $V_1(t) = t$, and $V_2(t) = t^4 - d t^2 + 2t$ in $\Z[d][t]$.
    For a positive integer $k$,
    we define polynomials inductively by
    \begin{align*}
        P_{2k+1}(t) &= \frac{-th(t) P_{2k}(t)^2 - V_{2k}(t)^2}{P_{2k-1}(t)},\\
        V_{2k+1}(t) &= \frac{-th(t)^2P_{2k}(t)^4 + t^2V_{2k}(t)^2
            + (-1 + dt)h(t)P_{2k}(t)^2 V_{2k}(t)}
            {V_{2k-1}(t)},\\
        P_{2k+2}(t) &= \frac{tP_{2k+1}(t)^2 - V_{2k+1}}{-h(t)P_{2k}(t)},\\
        V_{2k+2}(t) &= \frac{t P_{2k+1}(t)^4 + t^2 V_{2k+1}(t)^2
            - (-1 + dt) P_{2k+1}(t)^2 V_{2k+1}(t)}
            {V_{2k}(t)}.
    \end{align*}
    Then, the following holds:
    \begin{enumerate}
        \item $P_m(t), V_m(t) \in \Q[d][t]$
            for any positive integer $m$, \label{item:division_polynomials_hess_th1start}
        \item for any $P \in E_d^\hess(\Comp)$ such that $t_d(P) \neq \infty$,
            we have
            \begin{equation*}
                t_d([m]P) = \begin{cases}
                        \frac{V_m(t_d(P))}{P_m(t_d(P))^2}
                        & \text{if } m \text{ is odd},\\
                        -\frac{V_m(t_d(P))}{h(t_d(P))P_m(t_d(P))^2}
                        & \text{if } m \text{ is even},
                    \end{cases}
            \end{equation*} \label{item:division_polynomials_hess_mult}
        \item if $m$ is odd then $V_m(t)$ and $P_m(t)$ do not have common roots, \label{item:division_polynomials_hess_th1mid}
        \item if $m$ is even then $V_m(t)$ and $h(t)P_m(t)$ do not have common roots, \label{item:division_polynomials_hess_th1end}
        \item $V_m(t)$ is monic for any positive integer $m$, \label{item:division_polynomials_hess_prop1start}
        \item the leading coefficient of $P_m(t)$ in $t$ is $m$ if $m$ is odd,
            and $m/2$ if $m$ is even. \label{item:division_polynomials_hess_prop1end}
    \end{enumerate}
\end{theorem}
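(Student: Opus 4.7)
The plan is to prove all six parts simultaneously by strong induction on $m$, handling the pairs $(P_{2k+1}, V_{2k+1})$ and $(P_{2k+2}, V_{2k+2})$ together at each step. The base cases $m = 1, 2$ I would verify by direct computation: the stated $P_1, P_2, V_1, V_2$ lie in $\Q[d][t]$, the formula $t_d([1]P) = V_1/P_1^2 = t_d(P)$ is tautological, and the doubling formula on $E_d^\hess$ from \cite{FouazouLontouo2023} yields $t_d([2]P) = -V_2(t_d(P))/(h(t_d(P))\,P_2(t_d(P))^2)$. The leading-coefficient and no-common-root claims at $m = 1, 2$ are immediate from the explicit expressions.

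For the inductive step, I would read the recursion as a differential-addition formula for the Hessian $t$-coordinate. Given points $R_1, R_2$ on $E_d^\hess$ with known difference $R_1 - R_2$, the values $t_d(R_1 + R_2)$ and $t_d(R_1 - R_2)$ are the two roots of a monic quadratic whose coefficients are symmetric rational functions of $t_d(R_1)$ and $t_d(R_2)$ alone; this yields a rational expression for $t_d(R_1 + R_2)$ in terms of $t_d(R_1), t_d(R_2), t_d(R_1 - R_2)$. Applying this first with $(R_1, R_2) = ([2k]P, P)$ (difference $[2k-1]P$) and then with $(R_1, R_2) = ([2k+1]P, P)$ (difference $[2k]P$), and substituting the inductive expressions $t_d([2k-1]P) = V_{2k-1}/P_{2k-1}^2$ and $t_d([2k]P) = -V_{2k}/(h\,P_{2k}^2)$, one recovers exactly the recursive formulas for $P_{2k+1}, V_{2k+1}, P_{2k+2}, V_{2k+2}$. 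This simultaneously establishes part (2) and, by exhibiting the denominators of the recursion as factors already present in the resulting polynomial identity, part (1).

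Parts (3) and (4) I would handle by a geometric argument on $E_d^\hess$. A common root of $V_m$ and $P_m$ in the odd case, or of $V_m$ and $h P_m$ in the even case, would correspond to a value of $t_d(P)$ at which the rational expression for $t_d \circ [m]$ is of the form $0/0$. But $t_d \circ [m] : E_d^\hess \to \mathbb{P}^1$ is a well-defined morphism, so any indeterminacy of the univariate representative must come from the fibre of $t_d$ over $\infty$, which consists of specific $3$-torsion points; a direct check on these points excludes the coincidence. Parts (5) and (6) are established by tracking the top-degree terms through the recursion under the inductive hypothesis: a short calculation confirms that monicity of $V_m$ and the claimed leading coefficient of $P_m$ propagate at each step.

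The main obstacle is part (1): each recursive step is written as a ratio, and one must show that the division is exact inside $\Q[d][t]$ rather than merely in its fraction field. This is not a formal identity but a genuine consequence of the group law on $E_d^\hess$. The cleanest route is to derive the recursion from the differential-addition identity in its cleared, polynomial form, so that each "division by $P_{2k-1}$" or "division by $V_{2k-1}$" is visibly the removal of a factor already present in the algebraic identity, rather than an a posteriori cancellation to be verified.
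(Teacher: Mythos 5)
The paper's proof of this theorem is a one-line citation: properties (1)--(4) are attributed to Theorem~1 of \cite{FouazouLontouo2023} and properties (5)--(6) to Proposition~1 of the same reference. Your proposal instead reconstructs the statement from scratch by a differential-addition induction, which is a genuinely different route and, in outline, the right one. The $t$-coordinate satisfies $t_d(-R) = t_d(R)$ and $t_d(R + P^\hess_d) = t_d(R)$, so the unordered pair $\{t_d(R_1+R_2), t_d(R_1-R_2)\}$ is symmetric in the inputs and determined by $t_d(R_1), t_d(R_2)$; the shape of the recursion (denominators $P_{2k-1}$, $V_{2k-1}$, $-hP_{2k}$, $V_{2k}$ tracking the known difference) is exactly the ladder this gives, first with $([2k]P, P)$ and then $([2k+1]P, P)$. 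Your insistence on deriving the recursion in cleared polynomial form so that the divisions are visibly exact is also the right instinct; the paper does something similar, but only in Lemma~\ref{lem:division_polynomials_hess_recursion} and only for the stronger $\Z[d][t]$ claim needed later.

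However, your argument for parts (3) and (4) does not close. A common root of $V_m$ and $P_m$ does not create an indeterminacy in the morphism $t_d\circ[m]$: it would simply mean $V_m/P_m^2$ is not written in lowest terms, and the reduced fraction would still represent the morphism perfectly well. In particular, there is nothing that would force such an indeterminacy to sit over the fibre of $t_d$ at $\infty$. What actually forbids cancellation is a degree count: $[m]$ has degree $m^2$ and $t_d$ has generic fibre of size $6$, so $t_d\circ[m]$ descends to a degree-$m^2$ self-map of $\mathbb{P}^1$; if the degrees of $V_m$ and $P_m$ produced by the recursion (with no cancellation) already give $\max(\deg V_m, \deg(hP_m^2)) = m^2$ (resp.\ $\max(\deg V_m, 2\deg P_m) = m^2$), then coprimality is forced. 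This degree bookkeeping is missing from your sketch, and it is also what you would want anyway to make the ``divisions are exact'' claim in part (1) watertight and to pin down the leading coefficients in parts (5) and (6). The base cases and the structure of part (2) are fine.
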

\begin{proof}
    The properties (\ref{item:division_polynomials_hess_th1start})--(\ref{item:division_polynomials_hess_th1end})
    follow from Theorem 1 in \cite{FouazouLontouo2023}.
    The properties (\ref{item:division_polynomials_hess_prop1start})--(\ref{item:division_polynomials_hess_prop1end})
    follow from Proposition 1 in \cite{FouazouLontouo2023}.
\end{proof}
This theorem gives us the scalar multiplication formulas
for the $t$-coordinate on Hessian curves.
For division polynomials,
we define the $m$-th division polynomial for the $t$-coordinate on Hessian curves
as $P_m(t)$ if $m$ is odd,
and $-h(t) P_m(t)$ if $m$ is even.
Theorem \ref{thm:division_polynomials_hess}
proves the properties required for division polynomials
except for that they have coefficients in $\Z[d]$.
To show this,
we use the following lemma.
\begin{lemma}\label{lem:division_polynomials_hess_recursion}
    For any positive integer $k$,
    we have
    \begin{align*}
        tV_{2k+1} &= \begin{cases}
                    V_k^2V_{k+1}^2 - hP_k^2 P_{k+1}^2
                        (-hP_k^2V_{k+1} + P_{k+1}^2 V_k - dV_kV_{k+1})
                    \text{ if $k$ is even,}\\
                    V_k^2 V_{k+1}^2 - hP_k^2 P_{k+1}^2
                        (P_k^2 V_{k+1} - hP_{k+1}^2 V_k - dV_k V_{k+1})
                    \text{ if $k$ is odd,}
                \end{cases}\\
        P_{2k+1}^2 &= \begin{cases}
                    (hP_k^2V_{k+1} + P_{k+1}^2 V_k)^2
                    \text{ if $k$ is even,}\\
                    (P_k^2 V_{k+1} + hP_{k+1}^2 V_k)^2
                    \text{ if $k$ is odd,}
                \end{cases}\\
        V_{2k} &= \begin{cases}
                    V_k (V_k^3 - d h^2 P_k^4 V_k - 2 h^3 P_k^6)
                    \text{ if $k$ is even,}\\
                    V_k (V_k^3 - d P_k^4 V_k + 2 P_k^6)
                    \text{ if $k$ is odd,}
                \end{cases}\\
        -h P_{2k}^2 &= \begin{cases}
                    -4 h V_k^3 - d^2 h^2 P_k^4 V_k - 2 d h^3 V_k^2 P_k^6 - h^4 P_k^8
                    \text{ if $k$ is even,}\\
                    4 V_k^3 + d^2 P_k^4 V_k + 2 d V_k^2 P_k^6 - P_k^8
                    \text{ if $k$ is odd.}
                \end{cases}
    \end{align*}
    Here, we omit the variable $t$ from the polynomials for simplicity.
\end{lemma}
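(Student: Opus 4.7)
The plan is to prove the four identities by translating them, via the scalar-multiplication formula of Theorem \ref{thm:division_polynomials_hess}(\ref{item:division_polynomials_hess_mult}), into statements about the $t$-coordinate on the Hessian curve $E_d^{\hess}$. Writing $u = t_d([k]P)$ and $v = t_d([k+1]P)$, the scalar-multiplication formula gives $u = V_k/P_k^2$ for $k$ odd and $u = -V_k/(h P_k^2)$ for $k$ even, and similarly for $v$. The four cases of the lemma (each split by the parity of $k$) correspond exactly to substituting these expressions into a single underlying rational identity on the curve.

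First, I would establish a doubling formula for the $t$-coordinate, expressing $t_d([2]Q)$ as a rational function $D(t_d(Q))$ with coefficients in $\Z[d]$; this can be extracted from the explicit $t$-coordinate addition law in \cite[\S 2]{FouazouLontouo2023}. Applying this with $Q = [k]P$ and substituting the expression for $u$ gives a rational formula for $t_d([2k]P) = -V_{2k}/(h P_{2k}^2)$, and clearing denominators in the two parity cases yields the identities for $V_{2k}$ and $-h P_{2k}^2$. The coprimality in Theorem \ref{thm:division_polynomials_hess}(\ref{item:division_polynomials_hess_th1end}), together with the leading-coefficient normalizations in (\ref{item:division_polynomials_hess_prop1start})--(\ref{item:division_polynomials_hess_prop1end}), rules out any extraneous common factor and pins down the numerator and denominator uniquely.

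Next, since $[2k+1]P = [k]P + [k+1]P$ with $[k+1]P - [k]P = P$, I would derive a differential-addition formula on $E_d^{\hess}$ producing $t_d([k]P + [k+1]P)$ from $u$, $v$, and $t$. A direct computation on the Hessian cubic (intersect the line through two points with the curve to read off the third collinear point, then symmetrize over negation to obtain an identity in the $t$-coordinate) yields an identity of the form $t \cdot t_d([2k+1]P) = A(u,v)/B(u,v)$ with $A,B \in \Z[d][u,v]$. Substituting the four parity cases for $u$ and $v$ and clearing denominators by the appropriate powers of $P_k^2$, $P_{k+1}^2$, and $h$ then produces the claimed identities for $t V_{2k+1}$ and $P_{2k+1}^2$.

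The main obstacle is the explicit derivation of the differential-addition formula on the Hessian curve; its exact shape (in particular the symmetric combination $u^2 v^2 + u + v - d u v$ appearing in the numerator, and the factor $(v - u)^2$ in the denominator) is what dictates the precise form of the claimed identities. As a purely algebraic fallback, one can instead prove all four identities by strong induction on $k$ using the recurrences of Theorem \ref{thm:division_polynomials_hess}: the base case $k = 1$ is a direct computation of $V_3, P_3, V_4, P_4$ from the initial data $V_1, V_2, P_1, P_2, h$, and the inductive step uses the recurrences for $V_{2k+1}$ and $P_{2k+1}$ (which involve division by $V_{2k-1}$ and $P_{2k-1}$) and verifies, after substituting the inductive hypothesis, that the quotient is polynomial and matches the claimed form.
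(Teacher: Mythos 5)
Your proposal takes essentially the same route as the paper's proof: both derive differential-addition and doubling formulas for the $t$-coordinate on the Hessian curve (the paper extracts them from the $(X{:}Y{:}Z)$-addition law in Farashahi--Joye \cite[\S2.1]{PKC:FarJoy10}, you propose to compute them directly on the cubic or pull them from \cite{FouazouLontouo2023}), then substitute the scalar-multiplication expressions $u = \pm V_k / (h^{?}P_k^2)$ and $v = \pm V_{k+1}/(h^{?}P_{k+1}^2)$ according to parity, and identify numerator and denominator. Your guess about the shape of the differential-addition identity is right: one does obtain $t\cdot t_d([2k+1]P) = (u^2v^2 + u + v - duv)/(v-u)^2$, which after substitution yields exactly the polynomials in the statement.

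The one step you gloss over — and the paper makes explicit — is the passage from the equality of rational functions
\[
\frac{t\,V_{2k+1}}{P_{2k+1}^2} = \frac{N}{D}
\]
to the two polynomial identities $tV_{2k+1}=N$, $P_{2k+1}^2=D$. Coprimality of $V_{2k+1}$ and $P_{2k+1}$ plus a degree/leading-coefficient comparison does most of the work, but one must additionally verify that the right-hand numerator $N$ is actually divisible by $t$ (otherwise you cannot match $tV_{2k+1}$ against $N$ directly when $P_{2k+1}(0)=0$). The paper does this by evaluating at $t=0$, i.e., at the point $(0{:}-1{:}1)$, using the characterization $V_k(0)=0 \iff k\not\equiv 0\pmod 3$ and $P_k(0)=0\iff k\equiv 0\pmod 3$. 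Your phrase ``clearing denominators ... then produces the claimed identities'' needs this extra argument to be airtight. Your inductive fallback is a genuinely different (and self-contained) approach not used by the paper; it should work but involves more brute-force polynomial verification in the base case and inductive step.
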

\begin{proof}
    We can derive differential addition and doubling formulas
    for the $t$-coordinate from the addition formulas in \cite[\S 2.1]{PKC:FarJoy10}.
    From the differential addition formula and
    (\ref{item:division_polynomials_hess_mult}) in Theorem \ref{thm:division_polynomials_hess},
    if $k$ is even, we have
    \begin{equation*}
        \frac{tV_{2k+1}}{P_{2k+1}^2}
        =
        \frac{V_k^2V_{k+1}^2 - hP_k^2 P_{k+1}^2
                        (-hP_k^2V_{k+1} + P_{k+1}^2 V_k - dV_kV_{k+1})}
            {(hP_k^2 V_{k+1} +hP_{k+1}^2 V_k)^2}.
    \end{equation*}
    By taking $P = (0:-1:1)$ in (\ref{item:division_polynomials_hess_mult}) in Theorem \ref{thm:division_polynomials_hess},
    it follows that $V_k(0) = 0$ if and only if $k \not\equiv 0 \pmod{3}$,
    and $P_k(0) = 0$ if and only if $k \equiv 0 \pmod{3}$.
    Therefore, the numerator of the right-hand side is divisible by $t$.
    Since $V_{2k+1}$ and $P_{2k+1}$ are coprime from
    (\ref{item:division_polynomials_hess_th1mid}) and (\ref{item:division_polynomials_hess_th1end}) in Theorem \ref{thm:division_polynomials_hess},
    we obtain the first two equalities for even $k$
    by comparing the degrees and the leading coefficients.

    The other equalities can be shown similarly by using
    the doubling formula for the $t$-coordinate.
\end{proof}
Using this lemma,
we can show that $P_m(t) \in \Z[d][t]$ for any positive integer $m$
by induction on $m$.

Next,
we give isogeny formulas for the $t$-coordinate on Hessian curves.
Theorem 4 in \cite{BDFM2021} gives a formula of isogenies of degree
prime to $3$ on Hessian curves,
which uses $(X:Y:Z)$-coordinates.
The following lemma is a restatement of this theorem for
our $t$-coordinate.
\begin{lemma}\label{lem:isogeny_formula_Hess}
    Let $d \in \image \hess$ and $m$ be a positive integer
    prime to $3$.
    Let $K$ be a point on $E_d^\mathrm{Hess}$ of order $m$,
    and we define $d' \in \Comp$ by
    \begin{equation*}
        d' = c_1 \left(
                \left(2\left\lceil\frac{m}{2}\right\rceil - 3\right)d
                - 6\sum_{i=1}^{\left\lceil\frac{m}{2}\right\rceil - 1}
                    \frac{1}{t_d([i]K)}
                    + 6c_2
            \right)
            \left(
                \prod_{i=1}^{\left\lceil\frac{m}{2}\right\rceil - 1}
                t_d([i]K)
            \right),
    \end{equation*}
    where
    \begin{align*}
    &c_1 = 1 \text{ and } c_2 = 0 \text{ if } m \text{ is odd},\\
    &c_1 = c_2 = -2t_d\left(\left[\frac{m}{2}\right]K\right)^2 + \frac{d^2}{2} t_d\left(\left[\frac{m}{2}\right]K\right) - \frac{d}{2}
    \text{ if } m \text{ is even}.
    \end{align*}
    Then, the map defined by
    \begin{equation*}
        R \mapsto
        \left(
            \prod_{i=0}^{m-1}X(R + [i]K) :
            \prod_{i=0}^{m-1}Y(R + [i]K) :
            \prod_{i=0}^{m-1}Z(R + [i]K)
        \right)
    \end{equation*}
    is an isogeny from $E_d^\mathrm{Hess}$ to $E_{d'}^\mathrm{Hess}$
    with kernel generated by $K$.
\end{lemma}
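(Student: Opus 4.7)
The plan is to apply \cite[Theorem 4]{BDFM2021}, which already gives an isogeny from $E_d^\mathrm{Hess}$ to a Hessian curve with kernel generated by $K$ via the product formula on the coordinates $(X(R+[i]K):Y(R+[i]K):Z(R+[i]K))$, together with an explicit expression for the codomain parameter $d'$ in terms of the full set of coordinates $\{(X([i]K):Y([i]K):Z([i]K))\}_{i=1}^{m-1}$. What remains is to rewrite that expression purely in terms of the $t$-coordinates $t_d([i]K)$, reducing from a sum/product indexed by $1 \le i \le m-1$ to one indexed by $1 \le i \le \lceil m/2\rceil - 1$, plus a boundary contribution when $m$ is even.

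The key observation is that the negation map on $E_d^\mathrm{Hess}$ sends $(X:Y:Z)$ to $(Y:X:Z)$ (see \cite[\S 2.1]{PKC:FarJoy10}), so $t_d(-P) = XY/Z^2 = t_d(P)$. Hence, for every $i$ with $1 \le i \le m-1$ and $[i]K$ not a $2$-torsion point, the two distinct points $[i]K$ and $[m-i]K = -[i]K$ contribute the same $t$-value. I would use this to pair up the indices $i$ and $m-i$ in the product and sum occurring in the formula of \cite[Theorem 4]{BDFM2021}. After pairing, each $t_d([i]K)$ for $1 \le i \le \lceil m/2\rceil - 1$ enters the product exactly once (since each unordered pair contributes $X([i]K)Y([i]K)Y([i]K)X([i]K)/Z([i]K)^4 = t_d([i]K)^2$, and the square cancels against a square root taken in the normalization of $d'$; alternatively, the formula is already set up so that only one representative of each pair appears). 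The analogous rearrangement for the summation part gives the $\sum 1/t_d([i]K)$ term, and the coefficient $(2\lceil m/2\rceil-3)d$ records the $d$ contributed by each paired index.

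When $m$ is even, $[m/2]K$ is a $2$-torsion point satisfying $-[m/2]K=[m/2]K$, so it appears without a pair. On a Hessian curve, the affine $2$-torsion points satisfy an explicit relation between $x$, $y$, and $d$; evaluating the single unpaired factor from \cite[Theorem 4]{BDFM2021} at such a point and expressing the result in terms of $t_d([m/2]K)$ yields precisely the correction factors $c_1$ and $c_2$ stated in the lemma. Specifically, the expression $-2t^2 + \tfrac{d^2}{2}t - \tfrac{d}{2}$ is obtained by substituting the Hessian equation into the relevant combination of $X$, $Y$, $Z$ at $[m/2]K$ and clearing the denominator so the result is a rational function of $t_d([m/2]K)$ alone.

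The main obstacle is bookkeeping: carefully verifying that the rearrangement of \cite[Theorem 4]{BDFM2021} into a product of $t_d([i]K)$'s (rather than a product of full coordinate triples) absorbs the correct powers and constants, and that the $2$-torsion boundary case for even $m$ produces exactly the pair $(c_1,c_2)$. Once the pairing identity $t_d([i]K)=t_d([m-i]K)$ is in hand and the specialization at a $2$-torsion point is computed, the rest of the proof is a direct algebraic comparison between the two expressions for $d'$.
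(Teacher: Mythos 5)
Your overall plan mirrors the paper's proof: invoke \cite[Theorem 4]{BDFM2021}, pair the index $i$ with $m-i$ via $t_d(-P)=t_d(P)$, and treat the $2$-torsion point $[m/2]K$ separately when $m$ is even. However, several concrete ingredients are missing or wrong in your execution, and they are precisely the places where the paper does real work.

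First, you do not mention that the convention in \cite{BDFM2021} places the identity at $(0:-1:1)$ rather than at $(1:-1:0)$ as in the present paper, so the cited formula must first be conjugated by the coordinate swap $(X:Y:Z)\mapsto(Z:Y:X)$; this is not cosmetic, as it changes which affine ratio appears in each factor. Second, your pairing computation for the product is incorrect: writing $[m-i]K=(Y_i:X_i:Z_i)$, one has
\[
\frac{X_i}{Z_i}\cdot\frac{X_{m-i}}{Z_{m-i}}=\frac{X_iY_i}{Z_i^2}=t_d([i]K),
\]
not $t_d([i]K)^2$, and there is no ``square root taken in the normalization of $d'$'' to cancel anything; the two alternatives you offer in parentheses are both incorrect. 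Third, the pairing of the sum is not a mere rearrangement: it genuinely uses the Hessian curve equation to rewrite
\[
\frac{X_i^2}{Y_iZ_i}+\frac{Y_i^2}{X_iZ_i}=\frac{X_i^3+Y_i^3}{X_iY_iZ_i}=d-\frac{1}{t_d([i]K)},
\]
a step you only allude to. Fourth, for even $m$ the unpaired term evaluates (using $X_{m/2}=Y_{m/2}$ for a $2$-torsion point together with the curve equation) to $\tfrac{d}{2}-\tfrac{1}{2t_d([m/2]K)}$, and one must then invoke $h(t_d([m/2]K))=0$, where $h(t)=-4t^3+d^2t^2-2dt+1$, to eliminate the $1/t$ and recognize the polynomial expressions $c_1,c_2$. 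Your proposal does not identify this last identity, which is the only reason $c_1$ and $c_2$ are polynomial in $t_d([m/2]K)$ at all. In short, the route is the same, but the gaps you defer to ``bookkeeping'' contain all the substantive identities, and two of your explicit claims (the $t^2$ from pairing and the square-root normalization) are false.
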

\begin{proof}
    Note that the specified point on a Hessian curve in \cite{BDFM2021}
    is $(0:-1:1)$, different from our definition.
    Therefore, we need to compose the isomorphism
    defined by $(X:Y:Z) \mapsto (Z:Y:X)$
    to their isogeny formula.

    Let $(X_i, Y_i, Z_i) \in \Comp^3$ be a representative of $[i]K$ for $i = 1, \dots, m - 1$.
    Then,
    Theorem 4 in \cite{BDFM2021} shows that the lemma holds for
    \begin{equation*}
        d' = \left(
                (3 - 2m)d + 6\sum_{i=1}^{m - 1}
                    \frac{X_i^2}{Y_i Z_i}
            \right)
            \left(
                \prod_{i=1}^{m - 1}
                \frac{X_i}{Z_i}
            \right).
    \end{equation*}
    Therefore, it suffices to show that
    this is equal to the expression in the lemma.

    Since $-(X:Y:Z) = (Y:X:Z)$ on Hessian curves,
    we have
    \begin{equation*}
        \frac{X_i^2}{Y_i Z_i} + \frac{X_{m - i}^2}{Y_{m - i} Z_{m - i}}
        = \frac{X_i^3 + Y_i^3}{X_i Y_i Z_i}
        = d - \frac{1}{t_d([i]K)}.
    \end{equation*}
    If $m$ is even, we have
    \begin{equation*}
        \frac{X_{m/2}^2}{Y_{m/2} Z_{m/2}} = \frac{X_{m/2}}{Z_{m/2}} = \frac{d}{2} - \frac{1}{2t_d\left(\left[\frac{m}{2}\right]K\right)}.
    \end{equation*}
    Since $h(t_d([\frac{m}{2}]K)) = 0$, we have $-1/(2t_d([\frac{m}{2}]K))$ equals
    to $c_1, c_2$ for even $m$.
    By combining these equations,
    we obtain the desired expression of $d'$.
\end{proof}
An easy calculation using the following formulas
shows that this isogeny preserves the level structure:
\begin{align*}
    (X:Y:Z) + P^\hess_d &= (\omega^2 X : \omega Y : Z),\\
    (X:Y:Z) + Q^\hess_d &= (Z : X :Y).
\end{align*}

By the above results,
we obtain the following proposition.
\begin{proposition}
    The set of pairs
    $\{(\EEparam{d}{\hess}, t_d)\}$
    is a good model of $\hess$.
\end{proposition}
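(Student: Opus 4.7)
My plan is to verify the three model axioms for $t_d$, and then to supply the scalar multiplication, division, and isogeny data required for a good model from the results already stated in the paper.

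For the model axioms, surjectivity of $t_d$ onto $\Comp$ follows by setting $Z = 1$ and reducing $XY = t_0$, $X^3 + Y^3 + 1 = dXY$ via $s = X+Y$ to the cubic $s^3 - 3t_0 s = dt_0 - 1$, which always has a complex root; the value $\infty$ is attained by the three points with $Z = 0$, namely $0_{E_d^\hess} = (1:-1:0)$ and $(-\omega^i:1:0)$ for $i = 1, 2$. These three points are exactly $\{0, P_d^\hess, 2 P_d^\hess\}$, since iterating the identity $(X:Y:Z) + P_d^\hess = (\omega^2 X : \omega Y : Z)$ three times returns $(X:Y:Z)$, so $P_d^\hess$ has order $3$ and $t_d^{-1}(\infty) \subset E_d^\hess[3]$. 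For the equivalence condition, the group $H = \{\pm 1\} \times \langle P_d^\hess\rangle$ of order $6$ acts on $E_d^\hess$ and preserves $t_d$ (because $-(X:Y:Z) = (Y:X:Z)$ swaps $X, Y$, and translation by $P_d^\hess$ multiplies $XY$ by $\omega^3 = 1$); the morphism $t_d : E_d^\hess \to \mathbb{P}^1$ has degree $6$ since a generic fiber contains the $3 \times 2 = 6$ points from the cubic in $s$ and the quadratic $T^2 - sT + t_0$ whose roots give $(X, Y)$, so each generic fiber is a single $H$-orbit, which is the required equivalence.

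For the scalar multiplication formulas and division polynomials, I would take $M_m(d;t) = V_m/P_m^2$ (odd $m$) or $-V_m/(h P_m^2)$ (even $m$), and $\Psi_m = P_m$ (odd) or $-h P_m$ (even); the vanishing characterization is Theorem \ref{thm:division_polynomials_hess}(\ref{item:division_polynomials_hess_mult}), and the leading coefficient in $t$ of $\Psi_m$ is $m$ or $2m$ by Theorem \ref{thm:division_polynomials_hess}(\ref{item:division_polynomials_hess_prop1end}), whose prime divisors divide $m$ in both cases. The remaining integrality claim is that $V_m, P_m \in \Z[d][t]$. Lemma \ref{lem:division_polynomials_hess_recursion} provides polynomial recurrences directly producing $V_m$, $P_m^2$, and $h P_m^2$ over $\Z[d][t]$, so by induction $V_m \in \Z[d][t]$ and $P_m^2 \in \Z[d][t]$ (extracting $P_m^2$ from $h P_m^2$ in the even case by Gauss's lemma, using that $h$ is primitive over $\Z$ with content $\gcd(-4, 1, -2, 1) = 1$). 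Then $P_m \in \Z[d][t]$ follows from another application of Gauss: any $f \in \Q[d, t]$ whose square lies in $\Z[d, t]$ lies in $\Z[d, t]$, because the rational content $c$ of $f$ (the constant with $f = c f_0$ and $f_0 \in \Z[d, t]$ primitive) satisfies $c^2 \in \Z$, and a rational number with integer square is itself an integer.

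For the isogeny formula I would take $V_m(X; Y_1, \ldots, Y_{m-1})$ to be the expression in Lemma \ref{lem:isogeny_formula_Hess} after absorbing the $-6\sum 1/Y_i$ against the factor $\prod Y_i$ to produce $-6 \sum_i \prod_{j \neq i} Y_j$; in the odd case $c_1 = 1$ and the result lies in $\Z[X; Y_1, \ldots, Y_{m-1}]$, and in the even case the factor $c_1$ carries a $1/2$, which is absorbed by $\Z[1/m]$ since $2 \mid m$. The main obstacle I anticipate is the Gauss's-lemma bookkeeping to deduce $P_m \in \Z[d][t]$: one must carefully separate integer content from $\Z[d]$-content at each induction step, and verify primitivity of $h$ over $\Z$ in order to extract $P_m^2$ from $h P_m^2$ in the even case.
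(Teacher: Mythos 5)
Your proposal follows the same skeleton as the paper (invoke Theorem~\ref{thm:division_polynomials_hess} for the scalar multiplication and division polynomial data, Lemma~\ref{lem:division_polynomials_hess_recursion} plus induction for integrality, and Lemma~\ref{lem:isogeny_formula_Hess} for the isogeny formula), and your verification of the three model axioms and the Gauss's-lemma bookkeeping is more explicit than what the paper writes down; that part is sound.

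However, there is a genuine gap in the isogeny-formula step. Lemma~\ref{lem:isogeny_formula_Hess} only asserts that the given map is an isogeny of \emph{elliptic curves} $E_d^{\hess} \to E_{d'}^{\hess}$ with kernel $\langle K \rangle$; it says nothing about the $\Gamma(3)$ level structure. The definition of an isogeny formula requires that $V_m$ output the invariant of the \emph{enhanced} codomain: if $\varphi \colon (E_d^{\hess}, (P_d^{\hess}, Q_d^{\hess})) \to (E_{z'}^{\hess}, (P_{z'}^{\hess}, Q_{z'}^{\hess}))$ is the enhanced cyclic isogeny with $\ker\varphi = \langle R\rangle$, then the formula must return $z'$. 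The isogeny $\psi$ of Lemma~\ref{lem:isogeny_formula_Hess} built from $K=R$ lands on $E_{d'}^{\hess}$ with $d'$ the value the formula computes, but $\psi$ pushes the level structure forward to some basis $(P', Q')$ of $E_{d'}^{\hess}[3]$ that need not equal $(P_{d'}^{\hess}, Q_{d'}^{\hess})$ a priori. By Lemma~\ref{lem:isom_Hess_to_Hess_w}, if it does not, the actual enhanced codomain invariant $z'$ would be $\omega^i d'$ for some $i\neq 0$, and the formula would be wrong. The paper closes this gap explicitly by checking, via the translation identities $(X:Y:Z) + P^\hess_d = (\omega^2 X : \omega Y : Z)$ and $(X:Y:Z) + Q^\hess_d = (Z : X : Y)$, that $\psi$ does carry $(P_d^{\hess}, Q_d^{\hess})$ to the standard level structure (so $(P', Q') = (P_{d'}^{\hess}, Q_{d'}^{\hess})$). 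You should add that verification; your anticipated obstacle was the Gauss's-lemma bookkeeping, but this level-structure compatibility is the step your proposal actually misses.
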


From the theorems in Section \ref{subsec:main_theorem},
we obtain the following corollary.
\begin{corollary}
    For any positive integer $m$ prime to $3$,
    there exists the modular polynomial $\Phi_m^{\hess}(X, Y) \in \Z[X, Y]$
    of order $m$ for $\hess$.
    In addition,
    if $m > 1$, then $\Phi_m^{\hess}(X, Y)$ is symmetric.
\end{corollary}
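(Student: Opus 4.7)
The plan is to deduce the corollary as a direct specialization of the general results of Section~\ref{subsec:main_theorem}. The preceding development in this subsection has established exactly the structure that those theorems require: by the well-definedness proposition and Lemma~\ref{lem:isom_E_to_Hess}, $\hess$ is a computable invariant of level $\Gamma(3)$, and by Theorem~\ref{thm:division_polynomials_hess}, Lemma~\ref{lem:division_polynomials_hess_recursion}, and Lemma~\ref{lem:isogeny_formula_Hess}, the family $\{(E_d^\hess, t_d)\}$ is a good model of $\hess$. With that data in hand, the argument becomes a sequence of citations.

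First, I would invoke Theorem~\ref{thm:existence_of_modular_polynomial} with $\alpha = \hess$ and $N = 3$; since $m$ is prime to $3$ by hypothesis, this yields a polynomial $\Phi_m^\hess(X, Y) \in \Q[X, Y]$ satisfying the two conditions of Definition~\ref{def:modular_polynomial_higher_level}. Next, Theorem~\ref{thm:integral_modular_polynomial}~(\ref{enum:integer_coeff}) upgrades the field of coefficients to $\Z$, giving $\Phi_m^\hess(X, Y) \in \Z[X, Y]$.

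For the symmetry assertion, I would apply Theorem~\ref{thm:symmetry_of_modular_polynomial}. Its hypothesis is satisfied either when $\Gamma = \Gamma_0(N)$ (which fails here, as our level is $\Gamma(3)$) or when $m \equiv \pm 1 \pmod{N}$. Since $N = 3$ and $\gcd(m, 3) = 1$, the residue of $m$ modulo $3$ is either $1$ or $2 \equiv -1$, so the congruence condition holds automatically. Thus whenever $m > 1$, Theorem~\ref{thm:symmetry_of_modular_polynomial} gives $\Phi_m^\hess(X, Y) = \Phi_m^\hess(Y, X)$, completing the corollary.

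I do not anticipate any genuine obstacle: all of the difficulty has been front-loaded into verifying that $\hess$ admits a good model, which is exactly what the lemmas preceding the corollary accomplish. If anything, the only point requiring a moment's care is the observation that the automatic fulfillment of $m \equiv \pm 1 \pmod{3}$ for $\gcd(m,3)=1$ is what lets the symmetry conclusion hold for every admissible $m > 1$, rather than only for a congruence subclass, as would be needed for larger $N$.
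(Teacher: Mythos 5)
Your proof is correct and follows exactly the route the paper intends: the corollary is stated immediately after establishing that $\hess$ is a computable invariant of level $\Gamma(3)$ with a good model, and the paper simply cites ``the theorems in Section~\ref{subsec:main_theorem}.'' Your one substantive observation---that $\gcd(m,3)=1$ forces $m \equiv \pm 1 \pmod{3}$, so the hypothesis of Theorem~\ref{thm:symmetry_of_modular_polynomial} is automatic even though $\Gamma(3) \neq \Gamma_0(3)$---is precisely the point worth spelling out, and you spell it out correctly.
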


As in the case of Montgomery curves,
we can show some congruence properties of the coefficients
of our modular polynomials for Hessian curves.
\begin{proposition}\label{prop:coeff_zero_hessian}
    Let $m > 1$ be an integer prime to $3$.
    The coefficient of $X^i Y^j$ of $\Phi_m^{\hess}(X, Y)$ is zero
    unless $i + j \equiv \psi(m) \pmod{3}$ if $m \equiv 1 \pmod{3}$,
    and unless $i - j \equiv 0 \pmod{3}$ if $m \equiv 2 \pmod{3}$.
\end{proposition}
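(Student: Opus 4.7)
The plan is to adapt the argument of Proposition~\ref{prop:coeff_zero_montgomery} to the Hessian setting, replacing the $A \leftrightarrow -A$ twist with the cube-root twist of Lemma~\ref{lem:isom_Hess_to_Hess_w}. Concretely, the isomorphism $\tau : E_d^\hess \to E_{\omega d}^\hess$ given by $(X : Y : Z) \mapsto (X : Y : \omega^{-1} Z)$ fixes $P_d^\hess = (-\omega : 1 : 0)$ but sends $Q_d^\hess = (0 : -1 : 1)$ to $(0 : -\omega : 1)$, which equals $Q_{\omega d}^\hess + P_{\omega d}^\hess$ by the translation-by-$P$ formula recorded in Subsection~\ref{subsec:hessian}. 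Iterating, $\tau^k$ identifies $(E_d^\hess, (P_d^\hess, Q_d^\hess))$ with $(E_{\omega^k d}^\hess, (P_{\omega^k d}^\hess, Q_{\omega^k d}^\hess + k P_{\omega^k d}^\hess))$ as $\Gamma(3)$-enhanced curves. This yields the bookkeeping rule: for any enhanced curve $(E, (P, Q))$, the modified basis $(P, Q + kP)$ (which still has Weil pairing $\omega$) has $\hess$-value $\omega^{-k} \cdot \hess(E, (P, Q))$.

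Next, I transport a cyclic $m$-isogeny through this identification. If $\varphi : (E_d^\hess, (P, Q)) \to (E_{d'}^\hess, (P', Q'))$ is a cyclic $m$-isogeny of level $\Gamma(3)$, so $\varphi(P) = [m] P'$ and $\varphi(Q) = Q'$, then $\varphi(Q + P) = Q' + [m] P'$, and the same $\varphi$ is also a cyclic $m$-isogeny of level $\Gamma(3)$ from $(E_d^\hess, (P, Q + P))$ to $(E_{d'}^\hess, (P', Q' + [m] P'))$. By the rule above, the $\hess$-values of these enhanced curves are $\omega^{-1} d$ and $\omega^{-m} d'$, so $\Phi_m^\hess(d, d') = 0$ implies $\Phi_m^\hess(\omega^{-1} d, \omega^{-m} d') = 0$. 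Hence $\Phi_m^\hess(X, Y)$ and $\Phi_m^\hess(\omega X, \omega^m Y)$ share the same zero locus in $\Comp^2$. Since $\hess$ is computable, Theorem~\ref{thm:irreducibility_modular_polynomial} gives the irreducibility of $\Phi_m^\hess$; combined with matching degrees (both $\psi(m)$ in $X$ and in $Y$), the Nullstellensatz forces $\Phi_m^\hess(\omega X, \omega^m Y) = c \cdot \Phi_m^\hess(X, Y)$ for a nonzero constant $c$. Comparing leading coefficients in $Y$ gives $c = \omega^{m \psi(m)}$.

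Matching coefficients then yields $i + m j \equiv m \psi(m) \pmod{3}$ for every nonzero $[X^i Y^j]\Phi_m^\hess$. If $m \equiv 1 \pmod 3$, this is exactly $i + j \equiv \psi(m) \pmod 3$. If $m \equiv -1 \pmod 3$, it becomes $i - j \equiv -\psi(m) \pmod 3$, and here one observes that such an $m$, being prime to $3$, must admit a prime factor $p \equiv -1 \pmod 3$; since $\psi(p^a) = p^{a-1}(p + 1)$ is divisible by $3$ for any such $p$ and $\psi$ is multiplicative, $\psi(m) \equiv 0 \pmod 3$, whence the congruence collapses to $i - j \equiv 0 \pmod 3$. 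The main obstacle is spotting the correct twist: the cube-root isomorphism $\tau$ does not preserve the full $\Gamma(3)$-structure on the nose but shears $Q$ by $P$, and carefully tracking the resulting $[m] P'$ shift on the codomain side is what delivers the asymmetric substitution $Y \mapsto \omega^m Y$ from which the coefficient congruences fall out.
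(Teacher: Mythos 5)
Your proof is correct and follows the same core approach as the paper, which exploits the $\omega$-twist $\iota_d : E_d^\hess \to E_{\omega d}^\hess$ and the fact that it shears $Q$ by $P$. The paper's own proof is essentially a one-line sketch deferring to Proposition~\ref{prop:coeff_zero_montgomery}; you fill in the two details it leaves implicit, namely the precise bookkeeping for how the $\Gamma(3)$-level structure behaves at both ends of the isogeny (the $Y\mapsto\omega^m Y$ asymmetry coming from $\varphi(Q+P)=Q'+[m]P'$) and the verification that $\psi(m)\equiv 0 \pmod 3$ when $m\equiv -1 \pmod 3$, without which the paper's stated identity $\Phi_m^{\hess}(X,Y)=\Phi_m^{\hess}(\omega X,\omega^{-1}Y)$ would not even have matching $Y$-leading coefficients. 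One minor difference in execution: to pass from equality of zero loci to the identity $\Phi_m^\hess(\omega X,\omega^m Y)=c\,\Phi_m^\hess(X,Y)$, you invoke Theorem~\ref{thm:irreducibility_modular_polynomial} together with the Nullstellensatz, whereas the paper's pattern (see the Montgomery proof) is to normalize the $Y$-leading coefficient and conclude directly by the uniqueness clause in Definition~\ref{def:modular_polynomial_higher_level}. Both close the gap; the uniqueness route is marginally lighter since it does not need the computability hypothesis on $\hess$ or the irreducibility theorem, but your argument is sound and arrives at the same functional equation.
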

\begin{proof}
    Let $d \in \image\hess$.
    Then, the isomorphism $\iota_d : E_d^\hess \to E_{\omega d}^\hess$
    defined by $(X : Y : Z) \mapsto (X : Y : \omega^{-1}Z)$
    sends $(P_d^\hess, Q_d^\hess)$ to $(P_{\omega d}^\hess, P_{\omega d}^\hess + Q_{\omega d}^\hess)$.

    A similar argument to the proof for Proposition \ref{prop:coeff_zero_montgomery}
    using this isomorphism
    shows that
    \begin{equation*}
        \Phi_m^{\hess}(X, Y) = \begin{cases}
            \omega^{-\psi(m)}\Phi_m^{\hess}(\omega X, \omega Y) & \text{if } m \equiv 1 \pmod{3},\\
            \Phi_m^{\hess}(\omega X, \omega^{-1} Y) & \text{if } m \equiv 2 \pmod{3}.
            \end{cases}
    \end{equation*}
    This implies the desired result.
\end{proof}

\section{Experimental results}\label{sec:experiment}
We implemented the algorithm described in Section \ref{sec:algorithms}
to compute the modular polynomials of prime order $\ell$ for Montgomery curves and Hessian curves
by using SageMath~\cite{sagemath}.
Our code is based on and partially reuses the implementation~\cite{KR2024implementation}
by Kunzweiler and Robert.
For the implementation,
we use $B_\ell$ for the bound of the logarithmic height of
the modular polynomial of order $\ell$, instead of
the bound in Conjecture \ref{conj:height_bound}.

Our implementation is available at
\url{https://github.com/Yoshizumi-Ryo/modular_polynomial_level_sage}.
We computed the modular polynomials for Montgomery curves
and Hessian curves of prime order $\ell$ for all primes less than $150$.
The results are stored in the release page of this repository.
We confirmed that
the computed polynomials satisfy the conditions for isogenies
by computing isogenies on finite fields whose characteristic is
different from primes used in the CRT method.

\Cref{conj:height_bound} holds
for the computed modular polynomials
except for the case of Hessian curves with $\ell = 2, 5$.
The logarithmic height of $\Phi_2^{\hess}$ (resp. $\Phi_5^{\hess}$) is approximately $3.99$
(resp. $12.36$),
which is greater than $B_2/12 \approx 3.69$ (resp. $B_5/12 \approx 11.52$).

\Cref{conj:evaluation_at_non_image} holds for
the computed modular polynomials of Montgomery curves and Hessian curves.
In the case of Montgomery curves,
the permutation $\sigma$ is the identity.
In the case of Hessian curves,
the permutation $\sigma$ is given by
$3\omega^i \mapsto 3\omega^{\ell i}$
for $i = 0, 1, 2$.

\bibliography{
    bibtex/cryptobib/abbrev3,
    bibtex/cryptobib/crypto,
    bibtex/mybiblio,
    bibtex/yoshizumi_ref
}

\end{document}